\pgfplotsset{compat=newest}
\newtheorem{theorem}{Theorem}
\newtheorem{corollary}[theorem]{Corollary}
\newtheorem{lemma}[theorem]{Lemma}
\newtheorem{proposition}[theorem]{Proposition}
\newtheorem{recall}[theorem]{Recall}
\newtheorem{definition}[theorem]{Definition}
\newtheorem{assumption}[theorem]{Assumption}
\newtheorem{example}[theorem]{Example}
\theoremstyle{remark}
\newtheorem*{remark}{Remark}
\newcommand{\R}{\mathbf{R}}
\newcommand{\M}{\mathbf{M}}
\newcommand{\N}{\mathbf{N}}
\newcommand{\defeq}{\coloneqq}
\newcommand{\norm}[1]{\left\lVert#1\right\rVert}
\newcommand{\abs}[1]{|#1|}
\newcommand{\tabs}[1]{\Bigl|#1\Bigr|}
\newcommand{\dx}{\mathrm{d}}
\newcommand{\pd}{\partial}
\DeclareMathOperator{\adj}{Adj}
\DeclareMathOperator{\deter}{Det}
\DeclareMathOperator{\supp}{\mathrm{supp}}
\newcommand{\cross}{\times}
\newcommand{\cl}[1]{\overline{#1}}
\newcommand{\ddiv}{{\rm div}}
\newcommand{\grad}{\nabla}
\newcommand{\re}{{\widehat{\rm e}}}
\DeclareMathOperator{\hyp}{\mathrm{hyp}}
\title{On the regularity of continuous solutions to multidimensional scalar conservation laws with \(L^\infty\) source}
\author{Fabio Ancona\thanks{Universit\`a di Padova, 
Emails: \texttt{ancona@math.unipd.it}, \texttt{laura.caravenna@unipd.it}, \texttt{elio.marconi@unipd.it}} 
\and
Laura Caravenna\footnotemark[1]
\and
Alexander J.~Cliffe\thanks{Email: \texttt{ajc314@cantab.ac.uk}}
\and
Elio Marconi\footnotemark[1]}
\date{July 31, 2025\\
\bigskip\bigskip
{\sl Dedicated to Prof.~Shih-Hsien Yu on the occasion of his 60th birthday}
}
\begin{document}

\maketitle
\begin{abstract}
We prove the H\"older regularity of continuous isentropic solutions to multi-dimensional scalar balance laws when the source term is bounded and the flux satisfies general assumptions of nonlinearity.
The results are achieved by exploiting the kinetic formulation of the balance law.
\end{abstract}
\tableofcontents

\section{Introduction}

In this paper we study the regularizing effect of nonlinearity on continuous solutions to balance laws
\begin{align} \label{eq:cl}
    \pd_t u + \ddiv_{\bm{x}}f(u) = g ,
\end{align}
where \(f : \R \to \R^d\) is a smooth flux, \(g \in L^\infty (\R \times \R^d)\) is a bounded source, and $u=u(t,x)$, $x\in \R^d$, is the unknown.

In one space dimension $d=1$, and with zero source $g$, it was established in~\cite{Mar-reg-est} that the nonlinearity of the flux $f$ induces a fractional regularity of distributional solutions to~\eqref{eq:cl} which satisfy entropy inequalities
\begin{equation}\label{E:entr}
    \pd_t \eta(u) + \pd_x q(u)\leq 0\qquad\text{in}\ \ \mathcal{D}_{t,x}',
\end{equation}
associated to any entropy-entropy flux pair \((\eta,q)\), where $\eta(u)$ is convex and $q'(u)= \eta'(u) f'(u)$ for every $u$.

As a technical tool to provide stronger regularity to dispersive PDEs in one space dimension $d=1$, such as the Hunter-Saxton equation and the Camassa-Holm equation, Dafermos began investigating the method of characteristics for \emph{continuous} solutions to a balance law~\eqref{eq:cl}, focusing on the quadratic flux and assuming that the source $g$ was continuous in the single space variable.
When $g$ is zero, thus for a conservation law, he established Euclidean Lipschitz regularity of $f'(u)$ for general smooth fluxes $f$. In particular, Dafermos established that continuous solutions to a conservation law are isentropic: they do not incur entropy production since~\eqref{E:entr} holds as an equality.

Motivated by applications to the analysis of intrinsic Lipschitz graphs in the Heisenberg group, the effect of nonlinearity of the flux on continuous solutions to~\eqref{eq:cl} in one space dimension $d=1$ was studied in~\cite{BCSC} for the quadratic flux and bounded sources $g$. This work followed earlier research on intrinsic regular graphs~\cite{ASCV} relative to continuous sources $g$. Specifically, it was proved in~\cite{BCSC}, by generalizing a lemma on ODEs already present in~\cite{BSCCV2010}, that every continuous solution to the one-dimensional balance law~\eqref{eq:cl} with quadratic flux is $\frac{1}{2}$-Hölder continuous. 
This Hölder continuity result was then generalized to $\frac{1}{\ell}$-Hölder continuity if the flux $f$ is nonlinear of order $\ell$, see~\cite{Car-alphaconvex,Car-Mar-Pin}.

The latter result, alongside a reduction argument allowing for inflection points, contains a finer analysis of the Hölder continuity constant on small balls: it contains a direct proof that this constant is almost everywhere vanishingly small as the radius vanishes, based on the analysis with the Lebesgue differentiation theorem in~\cite{Car-alphaconvex}.
An indirect, much more complex argument was provided by~\cite{BSCCV2010,FSSC2011,CMPSC2014}.

We stress that continuous solutions to~\eqref{eq:cl}, even when $d=1$ and the flux is quadratic, can be very irregular from the Euclidean point of view: continuous solutions to~\eqref{eq:cl} with smooth \emph{convex} fluxes $f$ where the $\alpha$-nonlinearity condition fails are generally not H\"older continuous of any exponent, see~\cite{ABC3}.
Moreover, their graph can be a fractal set and they generally do not have bounded variation, see~\cite{KSC,ABC3}.
Nevertheless, continuous solutions to the one-dimensional balance law~\eqref{eq:cl} with any smooth flux are isentropic, as shown in~\cite{ABC1} where it was proved by approximation with functions of bounded variation.

In the case of more space dimensions, only particular cases are treated, e.g.~\cite{BSCCV2010} analyses a multidimensional system by a reduction argument to the one dimensional setting.
See also~\cite{ADD2023,Vittone}.

%
%
%
%

\vskip.3\baselineskip
This paper exploits the kinetic formulation to 
provide a first 
extension of 
 the H\"older regularity results 
to a real multidimensional setting.
Namely, let \(u : \R \times \R^d \to [0,1]\) be a bounded, {continuous} solution of the scalar balance law~\eqref{eq:cl}, and
assume that \(u\) is isentropic in the sense that
\begin{align}
    \pd_t \eta(u) + \ddiv_{\bm{x}} q(u) = \eta'(u) g \qquad\text{in}\ \ \mathcal{D}_{t,x}',
\end{align}
for any entropy-entropy flux pair \((\eta,q)\).

In the kinetic formulation, for a continuous isentropic solution, there holds (see~\cite{LPT,perthamebook})
\begin{align}\label{eq:kinetic}
    \pd_t \chi + f'(v) \cdot \grad_{\bm{x}}\chi = \mu_{0},
    \qquad \mu_{0}\defeq g \mathcal{L}^{d+1} \otimes \delta_{\{v = u(t,\bm{x})\}},
\end{align}
in the sense of distributions,
where \(\chi : \R \times \R^d \times \R \to \{0,1\}\) is given by
\begin{align}
    \chi(t,x,v) \defeq \begin{cases}
        1 &\quad \text{if \(0 < v \leq u(t,x)\),}\\
        0 &\quad \text{otherwise.}
    \end{cases}
\end{align}

We will make the following standard assumption on the flux \(f \in C^1([0,1];\R^d)\), which quantifies the extent of nonlinearity of the flux.
\begin{assumption} \label{ass:nl1}
	There exists \(\alpha \in (0,1]\) and \(C >0\) such that, for every \((\tau,\xi) \in \R^{d+1}\) with \(\abs{(\tau,\xi)} = 1\), and any \(\delta > 0\), we have
	\begin{align} \label{eq:nonlinearity}
	\begin{aligned}
			\mathcal{L}^1 \big( \{ v \in [0,1] : \abs{1 \cdot \tau + {f'(v)} \cdot \xi} < \delta \} \big) \leq C \delta^\alpha . 
	\end{aligned}
	\end{align}
\end{assumption}

The main result of the paper is:
\begin{theorem}
\label{thm:alpha}
    Let \(u \in C(\R\times\R^d ; [0,1])\) be a continuous and isentropic solution of the scalar balance law~\eqref{eq:cl} with bounded measurable source \(g \in L^\infty(\R\times \R^d)\), and flux \(f \in C^2([0,1];\R^d)\) satisfying the  $\alpha$-nonlinearity Assumption~\ref{ass:nl1}.
    Then, for any \(t\in \R\), \(u(t,\cdot)\) is \(\overline{\gamma}\)-H{\"o}lder continuous in space with \[\overline{\gamma} \defeq \frac{\alpha}{2d}\,.\]
\end{theorem}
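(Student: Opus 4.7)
I would attempt the proof by combining the kinetic formulation~\eqref{eq:kinetic} with a velocity averaging argument tailored to exploit both the continuity of $u$ and the $\alpha$-nonlinearity of Assumption~\ref{ass:nl1}. The target is a quantitative oscillation estimate on each small ball which iterates to Hölder continuity of the prescribed exponent.

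First, fix $(t_0,x_0)\in\R\times\R^d$ and a scale $r>0$, and denote by $\omega(r)$ the oscillation $\sup u - \inf u$ of $u$ on the ball $B_r(t_0,x_0)$, with $\bar u\defeq u(t_0,x_0)$. Continuity of $u$ gives $\omega(r)\to 0$ as $r\to 0$; more crucially, on $B_r$ the map $(t,x)\mapsto\chi(t,x,v)$ is constant in $(t,x)$ outside an interval $I_r\ni\bar u$ of length at most $\omega(r)$. Multiplying the kinetic equation by a smooth cutoff $\varphi(t,x,v)$ supported in $B_r\times I_r$ gives
\begin{equation*}
\pd_t(\varphi\chi) + f'(v)\cdot\grad_{\bm{x}}(\varphi\chi) = \varphi\mu_{0} + \chi\bigl(\pd_t\varphi + f'(v)\cdot\grad_{\bm{x}}\varphi\bigr),
\end{equation*}
whose first source is controlled by $\|g\|_{L^\infty}r^{d+1}$, since $\mu_0$ has total mass at most $\|g\|_\infty|B_r|$, and the second by $\|\grad\varphi\|_\infty$ times the size of its support.

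Next, I would pass to the Fourier side in the variables $(t,x)$. The symbol $\im(\tau + f'(v)\cdot\xi)$ is non-degenerate in $v$ in the quantitative sense of Assumption~\ref{ass:nl1}: splitting frequencies into the resonant region $\{v:|\tau + f'(v)\cdot\xi|<\delta\}$, whose $v$-measure is $\le C\delta^\alpha$, and its complement, where one can divide by the symbol, a standard Lions--Perthame--Tadmor velocity averaging argument should produce a fractional Sobolev estimate of order $\alpha/2$ on $\int\chi\varphi\,\dx v$, with constants quantitative in $\omega(r)$ and $r$. Optimising in $\delta$ and tracking the dependence of the cutoff derivatives in $\varphi$ on the two independent scales $r$ and $\omega(r)$ should give an inequality of the schematic form $[u]_{H^{\alpha/2}(B_{r/2})}\lesssim r^{\beta}\omega(r)^{\gamma}$.

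Finally, to upgrade fractional Sobolev regularity to pointwise Hölder continuity I would interpolate the resulting $H^{\alpha/2}$-estimate against the $L^\infty$ bound on $u$ and iterate the resulting oscillation bound on dyadic scales. The loss of the factor $1/d$ in the stated exponent $\overline{\gamma}=\alpha/(2d)$, compared with the pure velocity averaging exponent $\alpha/2$, should enter precisely at this Morrey/Campanato-type step, which pays the spatial dimension $d$. The main technical obstacle is that $\mu_0$ is a singular measure concentrated on the graph of $u$ rather than an $L^2$ source, so the velocity averaging argument must be applied in a form that tolerates measure right hand sides; it is here that the continuity of $u$ is decisive, because it makes the graph thin in $v$ of width $\omega(r)$, allowing the cutoff $\varphi$ to capture essentially all the mass of $\mu_0$ on $B_r$ while keeping the $v$-support of $\chi\varphi$ small enough that the estimate does not blow up.
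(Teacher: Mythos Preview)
Your proposal follows a genuinely different route from the paper, and it contains a real gap at the step where you produce the exponent $\alpha/(2d)$.

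The paper does not use Fourier-side velocity averaging at all. Instead it works entirely in physical space: it defines an oscillation quantity $h_r(t,x)$ built from local cube averages, and uses the kinetic equation only to compare $\mathcal{L}^{d+1}(\mathcal{Q}\cap\hyp u(\tilde t))$ with $\mathcal{L}^{d+1}(\mathcal{Q}\cap\mathsf{FT}(\hyp u(\tilde t-\tilde a);\tilde a))$ for a suitably chosen cuboid $\mathcal{Q}$ and time shift $\tilde a$. The $\alpha$-nonlinearity enters through a \emph{decomposition lemma}: any spatial displacement $a\in\R^d$ can be written as $\sum a_i f'(v_i)$ with $d+1$ values $v_i$ spread over an interval of length $h$ and coefficients $|a_i|\lesssim |a|/h^{d/\alpha}$. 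This furnishes a chain of free-transport steps along which the hypograph measure must change by a definite amount, and comparing that with the bound on $\mu_0$ forces $h_r\lesssim r^{\gamma_0}$ with $\gamma_0=\frac{1}{2(1+d/\alpha)}$. The final exponent $\alpha/(2d)$ is \emph{not} obtained in one shot: it is the fixed point of a bootstrap $\gamma_{n+1}=\frac{1+2\gamma_n}{2(1+d/\alpha)}$, where each improved H\"older exponent for $u$ tightens the mean-value step and hence the next estimate.

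The gap in your sketch is the accounting for the exponent. Standard Lions--Perthame--Tadmor averaging with a measure right-hand side gives Besov/Sobolev regularity of order at most $\alpha/2$ in $L^p$-type spaces, but this does not embed into any H\"older class when $\alpha/2\le d/2$ (which is always the case here, since $\alpha\le 1$). Your claim that the factor $1/d$ ``enters at the Morrey/Campanato step'' is not correct: Morrey embedding \emph{subtracts} $d/2$ from the Sobolev exponent rather than dividing by $d$, so $H^{\alpha/2}$ alone cannot produce $C^{\alpha/(2d)}$. In the paper the $d$ in the denominator comes instead from the decomposition lemma (one needs $d+1$ transport steps, each costing $h^{-1/\alpha}$), and the $2$ comes from the iteration; without an analogous mechanism your scheme has no visible source for the specific exponent $\alpha/(2d)$. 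The ``schematic'' inequality $[u]_{H^{\alpha/2}(B_{r/2})}\lesssim r^\beta\omega(r)^\gamma$ with unspecified $\beta,\gamma$ is exactly the place where a concrete computation is needed, and as stated there is no way to close the iteration to the target exponent.
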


We next refine this result making the following more restrictive nonlinearity assumption.

\begin{assumption} \label{ass:nl2}
The flux \(f \in C^{d+2}([0,1];\R^d)\) satisfies that, for every \(v \in [0,1]\), \(\{f''(v), \cdots, f^{(d+1)}(v)\}\) is a spanning set of \(\R^d\), or equivalently that 
\(\{(1,f'(v)), \cdots, (0,f^{(d)}(v)), (0,f^{(d+1)}(v))\}\) is a spanning set of \(\R \times \R^d\).
\end{assumption}
Notice that this is the maximal nonlinearity assumption compatible with the smoothness of the flux, meaning that it implies Assumption \ref{ass:nl1} with $\alpha=\frac1d$ and there are no smooth fluxes $f:\R \to \R^d$ which satisfy \eqref{eq:nonlinearity} with $\alpha> \frac1d$, see Lemma~\ref{L:Silv} below.
Assumption~\ref{ass:nl2} is satisfied by the Burgers flux \[f(u)=\bigg(\frac{u^{2}}2\,,\frac{u^{3}}3\,,\dots\,,\frac{u^{d+1}}{d+1}\bigg) .\]

\begin{theorem}
\label{thm:nondeg}
Let \(u \in C(\R\times\R^d ; [0,1])\) be a continuous and isentropic solution of the scalar balance law~\eqref{eq:cl} with bounded measurable source \(g \in L^\infty(\R\times \R^d)\), and flux \(f \in C^{d+2}([0,1];\R^d)\) satisfying the non degeneracy Assumption~\ref{ass:nl2}.
	Then, for any $t\in\R$, \(u(t,\cdot)\) is  \(\overline{\gamma}\)-H{\"o}lder continuous in space with \[\overline{\gamma} \defeq{\frac{1}{2d}}\,.\]
\end{theorem}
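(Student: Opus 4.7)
The plan is to follow the same route as Theorem~\ref{thm:alpha} -- exploit the kinetic formulation~\eqref{eq:kinetic} to obtain a fractional regularity estimate for the velocity average $u=\int_0^1\chi(t,\bm{x},v)\,\dx v$, then upgrade it to a pointwise Hölder estimate via the continuity of $u$ -- but to replace the averaging step by a sharper one that exploits the full spanning structure of Assumption~\ref{ass:nl2} rather than merely the scalar sublevel-set bound~\eqref{eq:nonlinearity}. Note that invoking Theorem~\ref{thm:alpha} directly with $\alpha=1/d$ yields only exponent $1/(2d^2)$, so reaching $\overline{\gamma}=1/(2d)$ requires an additional factor-$d$ gain. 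Heuristically, under Assumption~\ref{ass:nl2} one can treat the symbol $\tau+f'(v)\cdot\xi$ as if it were non-degenerate of exact order $1$ in $v$ along a suitable direction at every scale, thereby running the scheme of Theorem~\ref{thm:alpha} with an effective $\alpha=1$.

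After standard time-space localization, I would decompose $u(t,\bm{x})=\int_0^1\chi(t,\bm{x},v)\,\dx v$ by a Littlewood-Paley decomposition in the space-time frequency $(\tau,\xi)$. At dyadic scale $2^j$ and for fixed $\xi/|\xi|$, the $v$-integral splits into the neighbourhood of the characteristic set $\{v\in[0,1] : \tau+f'(v)\cdot\xi=0\}$ and its complement. On the complement, the kinetic equation~\eqref{eq:kinetic} is used together with integration by parts in $v$ and the $L^\infty$ bound on $g$; on the neighbourhood, a sharpened measure estimate tailored to Assumption~\ref{ass:nl2} is applied. Concretely, at every $\bar v\in[0,1]$ and every unit $\bar\xi$ at least one $|f^{(k)}(\bar v)\cdot\bar\xi|$ with $2\le k\le d+1$ is bounded below; a smooth partition of unity in $(v,\xi/|\xi|)$ reduces to the case where a single $k$ dominates, and a $v$-reparameterization then turns $\tau+f'(v)\cdot\xi$ into a polynomial of exact degree $k-1$ in the new variable, for which the one-dimensional averaging/van der Corput argument yields an $L^2$-based gain of $1/(2(k-1))$ in the $\bar\xi$-direction. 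Taking the worst case over $k\le d+1$ and combining the estimates over $\bar\xi\in S^{d-1}$ produces a Campanato-type bound
\begin{equation*}
\int_{B_r(\bm{x}_0)}\bigl|u(t,\bm{y})-u(t,\bm{x}_0)\bigr|^2\,\dx\bm{y}\;\le\;C\,r^{d+1/d},
\end{equation*}
uniformly in $\bm{x}_0$ on compact sets and for $r>0$ small, which by the continuity of $u$ and the standard embedding $\mathcal{L}^{2,d+2\gamma}\hookrightarrow C^{0,\gamma}$ with $\gamma=1/(2d)$ upgrades to the claimed pointwise Hölder bound on $u(t,\cdot)$.

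The main expected obstacle is the uniform construction of the $(v,\xi/|\xi|)$-partition and the accompanying $v$-straightening: the delicate regions are those where two consecutive quantities $f^{(k)}(v)\cdot\xi$ and $f^{(k+1)}(v)\cdot\xi$ are simultaneously small, calling for a multi-scale van der Corput argument on the moment-type curve $v\mapsto(1,f'(v))$, with interpolation constants kept independent of the dyadic frequency scale $2^j$. This is precisely where the full $C^{d+2}$-regularity of $f$ postulated in Assumption~\ref{ass:nl2} is genuinely used; all the other steps (localization, Littlewood-Paley decomposition, use of the kinetic equation on the complement, and the Campanato-to-Hölder upgrade) are minor adaptations of the corresponding steps in the proof of Theorem~\ref{thm:alpha}.
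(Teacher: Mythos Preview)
Your route is genuinely different from the paper's, and the proposal has a real gap.

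The paper never goes to Fourier space. Its proof of Theorem~\ref{thm:nondeg} reruns the physical-space scheme of Theorem~\ref{thm:alpha} verbatim (the oscillation functional $h_r(t,x)$ of Definition~\ref{def:h-cube}, free transport of hypographs, Lemma~\ref{lem:estimate-cylinder}, Lemma~\ref{lem:test-function-estimate}, and the bootstrap Lemma~\ref{lem:md-bootstrap}) with a single ingredient replaced: the decomposition Lemma~\ref{lem:decomposition-lemma} is upgraded to a sharper linear-algebra statement (Lemma~\ref{lem:decomposition-lemma2}). Under Assumption~\ref{ass:nl2}, a vector $a\in\R^d$ can be written as $\sum_i\lambda_i(f'(v_i)-f'(v_0))$ with equispaced $v_i\in[v,v+h]$ and $|\lambda|\le C\,|a|\,h^{-d}$, instead of the generic $h^{-d/\alpha}=h^{-d^2}$; this is proved by Taylor-expanding $f'(v_i)-f'(v_0)$ and inverting the Vandermonde-type matrix $H_d(h)=[(ih/d)^\ell]_{\ell,i}$. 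Substituting $d$ for $d/\alpha$ throughout Lemmas~\ref{lem:estimate-cylinder}--\ref{lem:md-bootstrap} and solving the fixed point $\overline\gamma=(1+2\overline\gamma)/(2(1+d))$ then gives $\overline\gamma=1/(2d)$.

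The gap in your proposal is the jump from the directional van~der~Corput gain to the Campanato bound. Your own analysis produces, for each unit $\xi$ and the dominating index $k\le d+1$, an $L^2$-based gain of order $1/(2(k-1))$; the worst case $k=d+1$ is exactly the $\alpha=1/d$ rate, so combining over $\xi\in S^{d-1}$ yields at best $u(t,\cdot)\in H^{1/(2d)}_{\mathrm{loc}}$. That is nowhere near $\mathcal L^{2,d+1/d}=C^{0,1/(2d)}$: in $\R^d$ the embedding of $H^s$ into a H\"older class requires $s>d/2$, and continuity of $u$ alone does not close that gap. In other words, your ``effective $\alpha=1$'' heuristic is not what your directional argument actually delivers; it delivers the same $\alpha=1/d$ sublevel estimate, which via Theorem~\ref{thm:alpha} gives only $1/(2d^2)$. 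The factor-$d$ improvement in the paper is not an averaging/oscillatory-integral phenomenon at all: it is the purely algebraic fact that, under the spanning hypothesis, the matrix of increments $[f'(v_i)-f'(v_0)]_i$ has inverse of operator norm $\lesssim h^{-d}$ rather than $h^{-d^2}$, and this feeds into the \emph{same} physical-space mechanism as before.
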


\vskip.3\baselineskip
We thus gain in the exponent $\overline \gamma$ a factor of $\frac 1 \alpha = d$.
The paper is organized as follows:

In Section~\ref{S:Burg1d}, we analyze the one-dimensional case with the Burgers flux and obtain in Corollary~\ref{cor:12holder} the $\frac{1}{2}$-H{\"o}lder regularity for continuous isentropic solutions of~\eqref{eq:cl}. 
Although the results of this section are contained in those following, we feel it is important to begin by proving this case due to the relevance of the Burgers equation in applications, and due to the fact that the proofs are more easily readable.

In Section~\ref{L:mdclNonl}, we address the multidimensional case under the $\alpha$-nonlinearity Assumption~\ref{ass:nl1}. The main result of this section is Proposition~\ref{cor:md-holder}, which proves Theorem~\ref{thm:alpha}.

In Section~\ref{sec:mdcl}, under the nonlinearity Assumption~\ref{ass:nl2}, we improve the H{\"o}lder continuity exponent in Proposition~\ref{cor:md-holder2}, thereby proving Theorem~\ref{thm:nondeg}.
As in \S\ref{S:Burg1d}, we first present the proof for the case of the Burgers equation, before generalizing it.
We also include several examples to illustrate our results.



%
%

\newpage

\begin{table}[h]
\centering
\begin{tabular}{l |r}
\hline
\textbf{Object} & \textbf{Symbol} \\
\hline
\\
Real valued square matrices of size $d\times d$\dotfill & \dotfill$\M_{d\times d}$ \\
Real numbers\dotfill & \dotfill$\R$ \\
Natural numbers, starting from $1$\dotfill & \dotfill$\N$ \\
If $a,b\in\R$, then $a\wedge b$ is the minimum and $a\vee b$ is the maximum\dotfill & \dotfill$\wedge$, $\vee$ \\
$\ell$-th coordinate unit vector in $\R^{d}$, e.g.~$\re_{1}=(1,0,\dots,0)\in\R^{d}$\dotfill &\dotfill$\re_{\ell}$\\
Euclidean norm $\sqrt{a_{1}^{2}+\dots+a_{d}^{2}}$ of a vector $a\in\R^{d}$\dotfill & \dotfill$\norm{a}$ or simply $\abs{a}$\\
Maximum norm $\max\{\abs{a_{1}},\dots,\abs{a_{d}}\}$ of a vector $a\in\R^{d}$\dotfill & \dotfill$\norm{a}_\infty$ or $\abs{a}_\infty$\\
Open ball $\{\norm{y-x}<r\}$ of radius $r$ centered at $x\in\R^{d}$\dotfill &\dotfill \dotfill$B_{r}(x)$ \\
Open square $\{\norm{y-x}_\infty<r\}$ of radius $r$ centered at $x\in\R^{d}$\dotfill &\dotfill 
\(Q_r(x)\)\\
The Lebesgue measure $\omega_d r^d$ of $B_r(0)$, or $(2r)^d$ for the hyper-cube $Q_r(0)$\dotfill &\dotfill 
\(\abs{B_r}\), \(\abs{Q_r}\)\\
The Lebesgue measure of a Lebesgue measurable set $E\subset \R^d$\dotfill &\dotfill 
 \(\abs{E}\)\\
Operator norm of a matrix $A\in\M_{d\times d}$ as in~\eqref{E:opNorm}\dotfill & \dotfill$\norm{A}_{}$\\
Frobenius norm of a matrix $A\in\M_{d\times d}$ as in~\eqref{e:FrobNorm}\dotfill &\dotfill $\norm{A}_{F}$\\
Determinant of a matrix $A\in\M_{d\times d}$, see~\cite[Page~168]{matrici}\dotfill &\dotfill \dotfill$\deter A$\\
Transpose of a matrix $A\in\M_{d\times d}$, see~\cite[Definition~2.17]{matrici}\dotfill &\dotfill \dotfill$  A^{T}$\\
Inverse of a matrix $A\in\M_{d\times d}$, see~\cite[Definitions~2.18;6.9, Proposition~6.7]{matrici}\dotfill &\dotfill \dotfill$  A^{-1}$\\
Topological closure of a set $S\subset\R^{d}$\dotfill & \dotfill$\overline S$ \\
Flux function of a balance law as in~\eqref{eq:cl}, see also Assumptions~\ref{ass:nl1}-\ref{ass:nl2}\dotfill &\dotfill $f$ \\
Order of nonlinearity of the flux function in Assumption~\ref{ass:nl1}\dotfill &\dotfill $\alpha$ \\
Bounded source functions of a balance law as in~\eqref{eq:cl}\dotfill & \dotfill$g$ \\
The uniform norm $\mathrm{ess}\sup \{\norm{p(\xi)}\ :\ \xi\in D\}$ of a function $p:D\subset \R^{m}\to\R^{n}$\dotfill & \dotfill   $\norm{p}_{L^\infty}$\\
Continuous solution of a balance law as in~\eqref{eq:cl} \dotfill&\dotfill $u$ \\
Function~\eqref{def:h} or~\eqref{def:h-cube} measuring oscillations of $u$ at scale $r$ around $x$ at time $t$ \dotfill&\dotfill $h=h_{r}(t,x)$\\
Hypograph of a function $u$, as defined in~\eqref{E:ipo}\dotfill & \dotfill$\hyp(u)$ \\
Space average \(\frac{1}{\abs{E}} \int_E u(t,z) \, \dx z\) of $u$ on any measurable \(E \subset \R \) \dotfill&\dotfill $(u(t,\cdot))_E$\\
Measure of the $v$-super-level set of $u$, within $B_r(y)$, as defined in~\eqref{E:subM}\dotfill & \dotfill$m(y,v)$ \\
Kinetic quantities, see~\eqref{eq:kinetic}, usually denoted as\dotfill &\dotfill $\chi,v,\mu_{0}$ \\
Free kinetic transport as in~\eqref{e:FT}\dotfill & $   \dotfill \mathsf{FT}$ \\
The matrix $\left[
\left(\frac{ih}{d}\right)^{\ell}
\right]_{\ell,i=1,\dots,d}$ of Lemma~\ref{L:normaH}, rows indexed by $\ell$, columns by $i$ \dotfill & \dotfill$H_d(h)$\\
The `almost Wronskian' matrix in~\eqref{e:wroB} or in~\eqref{e:wro} \dotfill & \dotfill$W$ or $W(f,v)$\\
The matrix of increments in~\eqref{e:Aincrements}  \dotfill & \dotfill$A(v,h,d)$\\
\\
\hline
\end{tabular}
\caption{List of Symbols}
\label{tab:notazioni_latex}
\end{table}



\section{One-Dimensional Burgers Case}
\label{S:Burg1d}
We begin with the \(1\)-D case for the Burgers equation, that is \(d=1\) with \(f(u) \defeq \frac12 u^2\).

\begin{definition} \label{def:h}
    For any \(r > 0\), and \((t,x) \in \R\times\R\), we write
    \begin{align}\label{e:h}
        h_r (t,x) \defeq \frac12 \max_{y_1,y_2 \in \overline{B_r(x)}} \big( (u(t,\cdot))_{B_r(y_1)} - (u(t,\cdot))_{B_r(y_2)} \big) , 
    \end{align}
    where \((u(t,\cdot))_E \defeq \frac{1}{\abs{E}} \int_E u(t,z) \, \dx z\) for any measurable \(E \subset \R \).
\end{definition}

\begin{lemma} \label{lem:h-estimate}
    Let \(u \in C( \R \times \R ; [0,1])\) be a continuous and isentropic solution of the Burgers equation with \(L^\infty\)-source \(g\), that is
    \begin{align}
        \pd_t u + \pd_x(\frac12 u^2) = g 
    \end{align}
    in the sense of distributions.
    Then there exists a constant \(C > 0\), depending only on \(\norm{g}_{L^\infty(\R\times\R)}\) such that
    \begin{align} \label{eq:h-estimate}
        h_r (t,x) \leq C r^{\frac13}
    \end{align}
    uniformly in \((t,x) \in \R \times \R \).
    The constant \(C\) can be chosen to satisfy \(C \leq (24 \norm{g}_{L^\infty(\R \times \R)})^{\frac13}\).
    \end{lemma}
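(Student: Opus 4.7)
The plan is to exploit the kinetic formulation~\eqref{eq:kinetic}, which for the Burgers flux reads $\partial_t \chi + v \partial_x \chi = g\,\delta_{v = u(t,x)}$, via Duhamel's formula along the straight free-transport characteristics $\tau \mapsto (\tau, z - (t - \tau) v)$, and then to balance a free parameter $s > 0$ between a source contribution and a kinetic/geometric contribution.

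First, integrating the kinetic equation along such a characteristic from $\tau = t-s$ to $\tau = t$ for fixed $(z, v)$, and then integrating in $v \in [0, 1]$, produces a backward Lax-type identity
\[
u(t, z) = \int_0^1 \chi(t - s, z - sv, v)\, dv + \mathcal{R}_s(z),
\]
where $\mathcal{R}_s(z)$ is the contribution of the singular source $\mu_0 = g\mathcal{L}^{2}\otimes\delta_{v=u(t,x)}$. Averaging over $z \in B_r(y)$ and applying Fubini to estimate the source yields
\[
(u(t, \cdot))_{B_r(y)} = \frac{1}{2r}\int_0^1 m_s(y - sv, v)\, dv + \mathcal{E}_s(y),
\]
where $m_s(a, v) \defeq \bigl|\{\xi \in B_r(a) : u(t-s, \xi) \geq v\}\bigr|$ and the source error is controlled by $|\mathcal{E}_s(y)| \leq \|g\|_{L^\infty}\bigl(s + s^2/(4r)\bigr)$, coming from the swept measure of the backward cone through $B_r(y)$.

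The crucial step is to estimate $\int_0^1 [m_s(y_1 - sv, v) - m_s(y_2 - sv, v)]\,dv$ for $y_1, y_2 \in \overline{B_r(x)}$. Geometrically this is the integral of $\mathbf{1}_{u(t-s, \xi) \geq v}$ over the symmetric difference of the two parallelograms $P(y_i) \defeq \{(v, \xi) : v \in [0, 1],\ \xi \in B_r(y_i - sv)\}$ in the $(v, \xi)$-plane, which are horizontal translates by $(0, y_1 - y_2)$. A change of variables along the slope-$(-1/s)$ sides of the parallelograms, combined with the fact that $u(t-s, \cdot)$ takes values in $[0, 1]$ and that $v \mapsto \mathbf{1}_{u \geq v}$ is monotone non-increasing, produces a bound on this difference that \emph{decays in $s$}, beating the trivial $O(r)$ estimate on the symmetric-difference area. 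This decay --- specific to $\chi$ being the indicator of a hypograph --- is the essential nonlinear/velocity-averaging ingredient provided by the quadratic Burgers flux.

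Combining the two steps yields an inequality of schematic form $h_r(t, x) \leq A(\|g\|_{L^\infty}, r, s) + B(r, s)$ valid for $s$ in a suitable range, where the source term $A$ grows in $s$ and the geometric term $B$ decays in $s$. Balancing the two by a suitable power choice $s_* = s_*(r, \|g\|_{L^\infty})$ produces $h_r(t, x) \leq C\, r^{1/3}$, and carefully tracking the constants through the estimates yields the claimed $C \leq (24 \|g\|_{L^\infty})^{1/3}$. The main obstacle I anticipate is the decay-in-$s$ step for the geometric term: the naive bound on the symmetric-difference integral is only $O(1)$ after dividing by $2r$, and extracting the necessary $s$-gain requires carefully exploiting the slanted geometry of the parallelograms together with the special indicator structure of $\chi$, which is the 1D analogue of the velocity-averaging mechanism driving the multidimensional arguments of Sections~\ref{L:mdclNonl}--\ref{sec:mdcl}.
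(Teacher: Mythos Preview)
Your proposal has a genuine gap precisely where you flag it: the claimed decay in $s$ of the geometric term $B(r,s)$ is not established, and in fact does not follow from the ingredients you list. Writing $F(\eta)\defeq\int_0^1 \mathbf{1}_{u(t-s,\eta-sv)\ge v}\,dv$, your quantity $\frac{1}{2r}\int_0^1[m_s(y_1-sv,v)-m_s(y_2-sv,v)]\,dv$ reduces to $\frac{1}{2r}\bigl(\int_{y_2+r}^{y_1+r}F-\int_{y_2-r}^{y_1-r}F\bigr)$, which is bounded only by $|y_1-y_2|/r\le 2$. The change of variables $\xi=\eta-sv$ turns $F(\eta)$ into $\frac{1}{s}\int_{\eta-s}^\eta \mathbf{1}_{u(t-s,\xi)\ge(\eta-\xi)/s}\,d\xi$, but comparing $F(\eta)$ with $F(\eta+2r)$ produces, on the common domain of length $s-2r$, the term $-\frac{1}{s}\int \mathbf{1}_{(\eta-\xi)/s\le u(t-s,\xi)<(\eta-\xi)/s+2r/s}\,d\xi$, which can be of order $1$ (take $u$ affine with slope $-1/s$). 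So the monotonicity of $v\mapsto\mathbf{1}_{u\ge v}$ and the $[0,1]$-range of $u$ alone do not produce the needed gain; without it your balancing step cannot close.

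The paper's argument avoids this obstruction by not trying to make the geometric term small at all. Instead it \emph{reverses its sign}. First, a mean-value step extracts a single level $\overline v$ with $m(y_1,\overline v+h)-m(y_2,\overline v)>h|B_r|$, and one forms the rectangles $R_1=B_r(y_1)\times(\overline v+\tfrac{h}{2},\overline v+h)$ and $R_2=B_r(y_2)\times(\overline v,\overline v+\tfrac{h}{2})$, so that $\mathcal L^2(R_1\cap\hyp u)-\mathcal L^2(R_2\cap\hyp u)\ge \tfrac12 h^2|B_r|$. Second, the transport time is chosen as $T=\tfrac{2(y_2-y_1)}{h}$ precisely so that $\mathsf{FT}(R_1;T)$ sits directly \emph{above} $\mathsf{FT}(R_2;T)$ (same spatial base, shifted up by $h/2$ in $v$); monotonicity of the hypograph then forces $\mathcal L^2(\mathsf{FT}(R_2;T)\cap\hyp u)\ge \mathcal L^2(\mathsf{FT}(R_1;T)\cap\hyp u)$. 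Combining this sign reversal with the source estimate $|\mathcal L^2(\mathsf{FT}(R_i;T)\cap\hyp u)-\mathcal L^2(R_i\cap\hyp u)|\le \|g\|_{L^\infty}(2rT+\tfrac{h}{4}T^2)$ and the bound $T\le 4r/h$ yields $h^3\le 24\|g\|_{L^\infty}r$ directly. The two missing ideas in your outline are thus (i) localisation at a single good level $\overline v$ rather than integrating over all $v$, and (ii) the specific choice of $T$ that aligns the transported rectangles vertically, converting monotonicity of the hypograph into a sign constraint.
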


\textit{Proof of Lemma~\ref{lem:h-estimate}.}
\textbf{1.}
Fix \((t_0, x_0) \in \R \times \R\).
Let \(y_1, y_2 \in \cl{B_r(x_0)}\) be chosen to realise the maximum in the definition of \(h=h_r(t_0,x_0)\).
We may assume \(h > 0\) otherwise we have nothing to show.
Write
\begin{align}\label{E:subM}
    m (y,v) \defeq \mathcal{L}^1\big( \{ z \in B_r(y) : u(t_0,z) > v \} \big) .
\end{align}
Then 
\begin{align}
    \begin{aligned}
        2 h \abs{B_r} &= \int_0^1 \big( m(y_1,v) - m(y_2,v) \big) \, \dx v \\
        &= \int_0^{1-h} \big( m(y_1,v + h) - m(y_2,v) \big) \, \dx v
        + \int_0^h m(y_1,v) \,\dx v
        - \int_{1-h}^1 m(y_2,v) \,\dx v \\
        &\leq 
        \int_0^{1-h} \big( m(y_1,v + h) - m(y_2,v) \big) \, \dx v + h \abs{B_r} .
    \end{aligned}
\end{align}
Therefore, there exists at least one value \(\overline{v} \in (0,1-h)\) such that 
\begin{align} \label{eq:mean-value-estimate}
    m(y_1,\overline{v}+h) - m(y_2,\overline{v}) > h \abs{B_r} .
\end{align}
Define the rectangles \(R_1 \defeq B_r(y_1) \times (\overline{v}+\frac{h}{2}, \overline{v} + h)\), and \(R_2 \defeq B_r(y_2) \times (\overline{v}, \overline{v} + \frac{h}{2}) \).
By the monotonicity of \(v \mapsto m(y,v)\) it follows that
\begin{align} \label{eq:estimate-1}
\begin{aligned}
    \mathcal{L}^2\big( R_1 \cap \hyp u(t_0, \cdot) \big) 
    - \mathcal{L}^2\big( R_2 \cap \hyp u(t_0, \cdot) \big)
    \geq \frac{1}{2} h^2 \abs{B_r} ,
\end{aligned}
\end{align}
where $\hyp(u)$ denotes the hypograph (or subgraph) of a function $u$, meaning that if $u:D\to [0,+\infty)$, $D\subset\R$, then
\begin{equation}\label{E:ipo}
\hyp(u) \defeq \{(x,v)\in D\times [0,+\infty)\ :\ u(x)\geq v\}.
\end{equation}
\textbf{2.}
For a measurable set \(E \subset \R \times \R\), denote the free-transport by time \(s\) of \(E\) by
\begin{align}\label{e:FT}
    \mathsf{FT}(E;s) \defeq \{(z,v) \in \R \times \R : (z - s f'(v), v) \in E\} .
\end{align}
For some \(T > 0\), we would like to estimate 
\begin{align}
    \tabs{ \mathcal{L}^2 \big( \mathsf{FT}(R_i; T) \cap \hyp u(t_0+T,\cdot)  \big) - \mathcal{L}^2 \big( R_i \cap \hyp u(t_0,\cdot)  \big) }
\end{align}
for \(i = 1\), \(2\).
To this end, let \(r' \in (0,r], v' \in (0, \frac{h}{4}]\) be fixed later, and write
\begin{align}
	R_1' \defeq B_{r+r'}(y_1) \times (\overline{v} + \frac{h}{2} - v', \overline{v} + h + v') .
\end{align}
This new rectangle is just a slight enlargement of $R_1$, increasing the space variable of $r'$ and the kinetic variable of $v'$.
Furthermore, write
\begin{align}
	Q_1 \defeq \bigcup_{t \in [t_0, t_0 + T]} \{t\} \times \mathsf{FT}(R_1'; t-t_0) .
\end{align}
Then testing the kinetic formulation~\eqref{eq:kinetic} against a test function \(\varphi \in C^1_c(Q_1)\) we obtain
\begin{align} \label{eq:integral-estimate1}
\begin{aligned}
	\int_{\mathsf{FT}(R_1';T)} \chi(t_0+T, z, v) \varphi(t_0 + T, z, v) \, \dx z \dx v 
	- \int_{R_1'} \chi(t_0, z, v) \varphi(t_0, z, v) \, \dx z \dx v \\
	= \int_{Q_1} \varphi \, \dx \mu_0 
	+ \int_{Q_1} \chi \big( \pd_t \varphi + f'(v) \pd_x \varphi \big) \, \dx t \dx z \dx v .
\end{aligned}
\end{align}

We should choose \(\varphi \in C_c^1(Q_1)\) carefully so that we can estimate the terms as necessary.
In particular, for \(a, b > 0\), fix a family of cutoff functions \(\phi_{a,b} \in C^1_c([0,\infty);[0,1])\) such that
\begin{align}
	\phi_{a,b}(s) = \begin{cases}
	1 \quad & \text{if \(s \in [0,a]\)} \\
	0 \quad & \text{if \(s \in [a+ \frac{b}{2},\infty)\)}
	\end{cases}
	\qquad {{}-}\frac{4}{b} \leq \phi'(s) \leq 0 .
\end{align}
Then choose
\begin{align}
	\varphi(t, x, v) \defeq \phi_{r,r'}(\abs{x - (t - t_0) f'(v) - y_1}) \, \phi_{\frac{h}{4},v'}(\abs{v-\overline{v} - \frac{3h}{4}}) .
\end{align}
Indeed, this choice satisfies \(\varphi \in C_c^1(Q_1)\), and clearly \(\varphi = 1\) on \(\{t_0\} \times R_1\) and \(\{t_0 + T\}\times \mathsf{FT}(R_1;T)\).
Furthermore, \(\pd_t \varphi + f'(v) \pd_x \varphi = 0\) in \(Q_1\).
Therefore, using also \(0 \leq \chi \leq 1\), the estimate~\eqref{eq:integral-estimate1} becomes
\begin{align*}
\begin{aligned}
	& \tabs{ \mathcal{L}^2 \big( \mathsf{FT}(R_1; T) \cap \hyp u(t_0+T,\cdot)  \big) - \mathcal{L}^2 	\big( R_1 \cap \hyp u(t_0,\cdot)  \big) } \\
	& \qquad \quad
	= \tabs{\int_{\mathsf{FT}(R_1;T)} \chi(t_0+T,z,v) \,\dx z\dx v - \int_{R_1} \chi(t_0,z,v) \, \dx z \dx v } \\
	& \qquad \quad
	\leq \tabs{\int_{Q_1} \varphi \, \dx \mu_0} + 
	\mathcal{L}^2\big( R'_1 \setminus R_1 \big)
	+
	\mathcal{L}^2 \big( \mathsf{FT}(R'_1;T) \setminus \mathsf{FT}(R_1;T) \big) .
\end{aligned}
\end{align*}%
By simple geometric considerations, we have \(\mathcal{L}^2\big( R'_1 \setminus R_1 \big) = h r' + 4 r v' + 4 r' v' \), and \(\mathcal{L}^2 \big( \mathsf{FT}(R'_1;T) \setminus \mathsf{FT}(R_1;T) \big) = \mathcal{L}^2\big( R'_1 \setminus R_1 \big)\).
Using~\eqref{eq:kinetic}, the first term on the right hand side of the inequality above can be estimated by \(\tabs{\int_{Q_1} \varphi \, \dx \mu_0} \leq \norm{g}_{L^\infty(\R \times \R)} \mathcal{L}^2 \big( \pi_{(t,x)} Q_1 \big)\), where \(\pi_{(t,x)} : (t,x,v) \mapsto (t,x)\) is the projection map onto the first two coordinates.
More explicitly, we have
\(\mathcal{L}^2 \big( \pi_{(t,x)} Q_1 \big) = (2(r+r') + \frac12 T (\frac{h}{2} + 2 v')) T \).
{
In particular, sending $r',v'$ to 0, we get
\begin{equation} \label{eq:estimate-2}
\tabs{ \mathcal{L}^2 \big( \mathsf{FT}(R_1; T) \cap \hyp u(t_0+T,\cdot)  \big) - \mathcal{L}^2 	\big( R_1 \cap \hyp u(t_0,\cdot)  \big) } \le \norm{g}_{L^\infty} \Big(2rT + \frac{h}{4}T^2 \Big).
\end{equation}}
The same estimate also holds for \(R_2\).

\textbf{3.}
We choose \(T = \frac{2(y_2-y_1)}{h} \leq \frac{4 r}{h}\), so that \(\mathsf{FT}(R_1;T)\) is ``above'' \(\mathsf{FT}(R_2;T)\), meaning that \(\mathsf{FT}(R_1;T) = (0,\frac{h}{2}) + \mathsf{FT}(R_2;T)\).
Then, for this value of \(T > 0\), using the previous estimates~\eqref{eq:estimate-1}, \eqref{eq:estimate-2}, we have
{
\begin{align}
\begin{aligned}
	0 &\leq \mathcal{L}^2 \big( \mathsf{FT}(R_2; T) \cap \hyp u(t_0+T,\cdot)  \big) - \mathcal{L}^2 	\big( \mathsf{FT}(R_1;T) \cap \hyp u(t_0 + T,\cdot)  \big) \\
	&\leq 2  \left(2rT + \frac{h}{4}T^2 \right)\norm{g}_{L^\infty} - rh^2 \\
	&\leq 24 \norm{g}_{L^\infty} \frac{r^2}{h} - r h^2.
\end{aligned}
\end{align}
Therefore 
\begin{align}
	h^3 \leq 24 \norm{g}_{L^\infty}  r.
\end{align}}
\qed

\begin{lemma} \label{lem:holder}
    Let \(u \in C(\R; \R)\) and $\gamma \in (0,1)$ be such that 
    \begin{align} \label{eq:h-gamma-estimate}
        h_r (x) \leq C r^\gamma \qquad \mbox{for every }x \in \R \mbox{ and }r>0.
    \end{align}
    Then \(u\) is \(\gamma\)-H{\"o}lder with constant at most \(2 C \big( 1 + \frac{2}{2^\gamma -1} \big)\).
    If \(\gamma \in [\frac13,\frac12)\), the H{\"o}lder constant satisfies \(2 C \big( 1 + \frac{2}{2^\gamma -1} \big) \leq 10 C\).
\end{lemma}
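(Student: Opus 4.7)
The plan is to pass from the bound \eqref{eq:h-gamma-estimate} on the oscillation of averages to a genuine pointwise H\"older estimate on $u$, by combining the continuity of $u$ with a dyadic telescoping argument at fixed center.

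First I would record two elementary facts. Continuity of $u$ yields $u(x) = \lim_{r \to 0^+} (u)_{B_r(x)}$ at every $x \in \R$. Moreover, the hypothesis unpacks into a uniform oscillation bound for same-scale averages at nearby centers: whenever $|x-y| \leq r$, the midpoint $z = (x+y)/2$ satisfies $x, y \in \overline{B_r(z)}$, so the definition of $h_r(z)$ gives
\begin{equation*}
\bigl| (u)_{B_r(x)} - (u)_{B_r(y)} \bigr| \leq 2\, h_r(z) \leq 2 C r^\gamma.
\end{equation*}

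Next I would prove a \emph{halving estimate}: passing from scale $r$ to scale $r/2$ at the same center costs only $2C(r/2)^\gamma$. In one dimension the ball splits exactly as $B_r(x) = B_{r/2}(x - r/2) \cup B_{r/2}(x + r/2)$ (up to a single point), so
\begin{equation*}
(u)_{B_r(x)} = \tfrac{1}{2} \bigl( (u)_{B_{r/2}(x - r/2)} + (u)_{B_{r/2}(x + r/2)} \bigr),
\end{equation*}
and applying the oscillation bound to each sub-ball against $B_{r/2}(x)$ yields $|(u)_{B_r(x)} - (u)_{B_{r/2}(x)}| \leq 2C(r/2)^\gamma$. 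Telescoping over the dyadic scales $r_k = r/2^k$ and passing to the limit in $k$ then gives
\begin{equation*}
\bigl| u(x) - (u)_{B_r(x)} \bigr| \leq \sum_{k=0}^\infty 2C \bigl(\tfrac{r}{2^{k+1}}\bigr)^\gamma = \frac{2 C r^\gamma}{2^\gamma - 1}.
\end{equation*}

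Finally, a three-term triangle inequality closes the argument. For $x_1, x_2 \in \R$ set $r = |x_1 - x_2|$ and write
\begin{equation*}
|u(x_1) - u(x_2)| \leq |u(x_1) - (u)_{B_r(x_1)}| + |(u)_{B_r(x_1)} - (u)_{B_r(x_2)}| + |(u)_{B_r(x_2)} - u(x_2)|.
\end{equation*}
The two outer terms are each bounded by $\frac{2 C r^\gamma}{2^\gamma-1}$ from the telescoping step, while the middle term is bounded by $2 C r^\gamma$ by the oscillation estimate applied with $y_1 = x_1$ and $y_2 = x_2$; summing produces exactly $2 C r^\gamma \bigl( 1 + \tfrac{2}{2^\gamma - 1} \bigr)$. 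The numerical assertion for $\gamma \in [1/3, 1/2)$ then reduces to monotonicity of $\gamma \mapsto 1 + 2/(2^\gamma - 1)$ and evaluation at the worst endpoint, which is a routine one-line check. No step seems truly obstructive, but the halving estimate is the sensitive point: it relies essentially on the fact that in $\R^1$ a ball decomposes exactly as the union of two half-sized balls at shifted centers, so in higher dimensions one would have to replace this by a covering-type argument.
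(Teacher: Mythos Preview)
Your argument is correct and follows essentially the same route as the paper: a three-term triangle inequality at scale $r=|x_1-x_2|$, the same halving estimate (the paper splits the integral over $B_r(x)$ into the two halves $[x-r,x]$ and $[x,x+r]$ and compares each to $(u)_{B_{r/2}(x)}$, which is exactly your decomposition $B_r(x)=B_{r/2}(x-\tfrac r2)\cup B_{r/2}(x+\tfrac r2)$), and the same dyadic telescoping sum. The only cosmetic difference is that you center the oscillation bound at the midpoint $z$ while the paper uses $h_r(x)$ directly, but the resulting constants are identical.
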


\textit{Proof of Lemma~\ref{lem:holder}.}
For any \(x , y \in \R\), let \(r \defeq \abs{y - x}\).
We have
\begin{align}
\begin{aligned}
    \abs{u(x) - u(y)} \leq \abs{u(x) - (u)_{B_r(x)}} + \abs{(u)_{B_r(x)} - (u)_{B_r(y)}} + \abs{u(y) - (u)_{B_r(y)}} \\
    \leq 
    \abs{u(x) - (u)_{B_r(x)}} + \abs{u(y) - (u)_{B_r(y)}} + 2 h_r(x) .
\end{aligned}
\end{align}
Next we have the simple estimate
\begin{align}
\begin{aligned}
    \abs{(u)_{B_r(x)} - (u)_{B_{\frac{r}{2}}(x)}}
    &=
    \tabs{\frac{1}{2r} \int_{x-r}^{x+r} u(z) \,\dx z - \frac{1}{r} \int_{x-\frac{r}{2}}^{x+\frac{r}{2}} u(z) \,\dx z} \\
    &\leq \frac{1}{2r} \tabs{ \int_{x-r}^{x} u(z) \,\dx z - \int_{x-\frac{r}{2}}^{x+\frac{r}{2}} u(z) \,\dx z }
    + \frac{1}{2r} \tabs{ \int_{x}^{x+r} u(z) \,\dx z - \int_{x-\frac{r}{2}}^{x+\frac{r}{2}} u(z) \,\dx z } \\
    &\leq 2 h_{\frac{r}{2}}(x) .
\end{aligned}
\end{align}
Since \(u\) is continuous, we have
\begin{align}
\begin{aligned}
    \abs{(u)_{B_r(x)} - u(x)} = \tabs{\sum_{k=0}^\infty \Big( (u)_{B_{\frac{r}{2^k}}(x)} - (u)_{B_{\frac{r}{2^{k+1}}}(x)} \Big) }
    \leq \sum_{k = 0}^\infty 2 h_{\frac{r}{2^{k+1}}}(x) 
    \leq 2 C \sum_{k=0}^\infty \big( \frac{r}{2^{k+1}} \big)^\gamma
    \\
    = \frac{2 C r^\gamma}{2^\gamma - 1 } .
\end{aligned}
\end{align}
Therefore 
\begin{align}
    \abs{u(x) - u(y)}  \leq 2 h_r (x) + \frac{4C r^\gamma}{2^\gamma -1}
    \leq 2 C \big( 1 + \frac{2}{2^\gamma -1} \big) \abs{y-x}^\gamma .
\end{align}
\qed

%


\begin{lemma} \label{lem:improved-holder}
    Let \(u \in C(\R \times \R; [0,1])\) be as in Lemma~\ref{lem:h-estimate}.
	For some \(t \in \R\) and \(\gamma \in (0,\frac12)\), assume that
	\begin{align}
		h_r(t,x) \leq C r^\gamma
	\end{align}
	uniformly in \(x \in \R\).
	Then in fact, there exists a constant \(\widetilde{C} >0\), depending only on \((C,\gamma,\norm{g}_{L^\infty(\R\times\R)})\), such that  
	\begin{align}
		h_r(t,x) \leq \widetilde{C} r^{\frac{1+\gamma}{3}}
	\end{align}
	uniformly in \(x\in\R\).
	If \(\gamma \in [\frac13,\frac12)\), the constant \(\widetilde{C}\) can be chosen to satisfy \(\widetilde{C} \leq 10 (2 C \norm{g}_{L^\infty(\R \times \R)})^{\frac13}\).
\end{lemma}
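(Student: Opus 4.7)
The plan is to follow the blueprint of Lemma~\ref{lem:h-estimate}, introducing the a priori H\"older information only at the last stage, in the estimate of the source contribution of $\mu_0$. The target is to upgrade the basic estimate $h^3 \lesssim \|g\|_{L^\infty} r$ of Lemma~\ref{lem:h-estimate} to $h^3 \lesssim C \|g\|_{L^\infty} r^{1+\gamma}$, which yields the exponent $(1+\gamma)/3$ upon taking a cube root.

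As a preliminary step, I would apply Lemma~\ref{lem:holder} to the assumption $h_r(t, \cdot) \leq C r^\gamma$ to obtain that $u(t, \cdot)$ is classically $\gamma$-H\"older continuous in space with constant $K \leq 10C$ (for $\gamma \in [\tfrac13, \tfrac12)$). Next I would repeat the setup of Lemma~\ref{lem:h-estimate} verbatim: fix $x_0 \in \R$, write $h = h_r(t, x_0)$, pick $y_1, y_2 \in \overline{B_r(x_0)}$ realizing the maximum, select $\bar v \in (0, 1-h)$ by pigeonhole with $m(y_1, \bar v + h) - m(y_2, \bar v) > h\abs{B_r}$, form the rectangles $R_1, R_2$ giving the lower bound $\mathcal{L}^2(R_1 \cap \hyp u(t, \cdot)) - \mathcal{L}^2(R_2 \cap \hyp u(t, \cdot)) \geq r h^2$, and fix $T = 2(y_2 - y_1)/h \leq 4r/h$ so that $\mathsf{FT}(R_1; T) = (0, h/2) + \mathsf{FT}(R_2; T)$. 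Testing the kinetic formulation~\eqref{eq:kinetic} against the same family of cutoff functions $\varphi_i$ as in Lemma~\ref{lem:h-estimate} and passing to the limit $r', v' \to 0$, the task reduces to a refined upper bound on the source contributions $\sum_{i=1,2} \bigl|\int_{Q_i} \varphi_i \, \dx \mu_0\bigr|$.

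This refined source estimate is the heart of the argument and the \emph{main obstacle}. In Lemma~\ref{lem:h-estimate}, the naive bound $\|g\|_{L^\infty} \mathcal{L}^2(\pi_{(t,x)} Q_i) \lesssim \|g\|_{L^\infty} r^2/h$ discarded the fact that $\mu_0 = g \mathcal{L}^{d+1} \otimes \delta_{\{v = u(s,x)\}}$ is concentrated on the graph of $u$. Writing
\[
\Bigl|\int_{Q_i} \varphi_i \, \dx \mu_0\Bigr| \leq \|g\|_{L^\infty} \int_t^{t+T} \mathcal{L}^1\bigl(\{x \in \R : (s, x, u(s, x)) \in \supp \varphi_i\}\bigr) \, \dx s ,
\]
and noting that $\supp \varphi_i$ is a $v$-slab of width $\sim h$ around $\bar v + 3h/4$, the inner integrand is the $x$-measure of the set where $u(s, x)$ lies in that narrow $v$-interval and the characteristic coordinate $x - (s-t) u(s, x)$ lies in $B_r(y_i)$. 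I would combine the $\gamma$-H\"older regularity of $u(t, \cdot)$ with a short-time propagation argument based on $\|g\|_{L^\infty}$ --- controlling $u(s, \cdot) - u(t, \cdot)$ in a suitable distributional sense via the kinetic equation --- to show that this inner measure is of order $r^{1+\gamma}$ rather than $r$, producing $\bigl|\int_{Q_i}\varphi_i \, \dx \mu_0\bigr| \lesssim K \|g\|_{L^\infty} r^{1+\gamma} T$.

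Substituting $T \leq 4r/h$ yields $rh^2 \lesssim K \|g\|_{L^\infty} r^{2+\gamma}/h$, equivalently $h^3 \lesssim K \|g\|_{L^\infty} r^{1+\gamma}$, and taking cube roots produces $h_r(t, x_0) \leq \widetilde C r^{(1+\gamma)/3}$ with $\widetilde C$ controlled, after tracking the H\"older constant $K = 10C$ through the refined estimate, by $10(2C\|g\|_{L^\infty})^{1/3}$ in the range $\gamma \in [\tfrac13, \tfrac12)$. The principal difficulty, as emphasized above, is implementing the level-set bound for the inner $\mathcal{L}^1$-measure rigorously: since the H\"older control is available only at the single time $t$, one must carefully combine it with the $L^\infty$-bound on $g$ to propagate the estimate over the short time window $[t, t+T]$, and this is where the bulk of the technical work would be concentrated.
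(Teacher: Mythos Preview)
Your proposal contains a genuine gap at exactly the point you flag as the ``main obstacle''. The claim that
\[
\mathcal{L}^1\bigl(\{x : u(s,x)\in[\bar v+\tfrac{h}{2},\bar v+h],\ x-(s-t)u(s,x)\in B_r(y_i)\}\bigr)\ \lesssim\ K\,r^{1+\gamma}
\]
does not follow from $\gamma$-H\"older continuity of $u(t,\cdot)$, even combined with short-time propagation. H\"older continuity bounds the \emph{oscillation} of $u$ on an interval, not the measure of its level sets: a $\gamma$-H\"older function on an interval of length $\sim r$ can take values in a prescribed window of width $h/2$ on a subset of full measure $\sim r$. The characteristic constraint already confines $x$ to an interval of length $O(r)$, and there is no mechanism to squeeze this further to $O(r^{1+\gamma})$. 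Thus the refined source estimate you propose is not available, and the argument collapses at precisely the step you identify as difficult.

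The paper's proof inserts the H\"older information at a different, and much simpler, stage. It leaves Steps~\textbf{2}--\textbf{3} of Lemma~\ref{lem:h-estimate} (including the source estimate~\eqref{eq:estimate-2}) completely untouched, and instead sharpens Step~\textbf{1}: since $u(t,\cdot)$ is $\gamma$-H\"older on $\overline{B_{2r}(x)}$, its range there is an interval $[\underline{u},\overline{u}]$ of length $\lesssim C r^\gamma$. Running the pigeonhole over $v\in[\underline{u},\overline{u}-h]$ instead of $v\in[0,1-h]$ yields
\[
m(y_1,\bar v+h)-m(y_2,\bar v)\ \gtrsim\ \frac{h}{Cr^\gamma}\,|B_r|,
\]
an improvement by the factor $r^{-\gamma}$ over~\eqref{eq:mean-value-estimate}. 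Feeding this into the unchanged Steps~\textbf{2}--\textbf{3} gives $r^{1-\gamma}h^2/C \lesssim \|g\|_{L^\infty}r^2/h$, hence $h^3\lesssim C\|g\|_{L^\infty}r^{1+\gamma}$. The H\"older hypothesis is used once, geometrically, to shrink the $v$-range; no level-set or time-propagation argument is needed.
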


\textit{Proof of Lemma~\ref{lem:improved-holder}.}
\textbf{1.}
Let \(t \in \R\) be as given, and fix any \(x \in \R\).
Set 
\begin{align}
	\underline{u} \defeq {
    \min_{y \in \overline{B_{2r}(x)}} u(t,y)} , \qquad 
		\overline{u} \defeq{
        \max_{y \in \overline{B_{2r}}(x)} u(t,y)} .
\end{align}
Once again, let \(y_1, y_2 \in \cl{B_r(x)}\) be chosen to realise the maximum in the definition of \(h=h_r(t,x)\).
Then
\begin{align}
\begin{aligned}
	2 h \abs{B_r} &= \int_{\underline{u}}^{\overline{u}} \big( m(y_1,v) - m(y_2,v) \big) \,\dx v \\
	&= \int_{\underline{u}}^{\overline{u}-h} \big( m(y_1,v+h) - m(y_2,v) \big) \, \dx v 
	+ \int_{\underline{u}}^{\underline{u}+h} m(y_1,v) \, \dx v
	- \int_{\overline{u} - h}^{\overline{u}} m(y_2,v) \, \dx v \\
	&\leq \int_{\underline{u}}^{\overline{u}-h} \big( m(y_1,v+h) - m(y_2,v) \big) \, \dx v
	+ h \abs{B_r} ,
\end{aligned}
\end{align}
so there exists at least one value \(\overline{v} \in (\underline{u},\overline{u}-h)\) such that 
\begin{align}
	m(y_1,\overline{v} + h) - m(y_2, \overline{v}) \geq \frac{h}{\overline{u}-\underline{u}-h} \abs{B_r} .
\end{align}
Note that trivially \(0 \leq 2h \leq \overline{u} - \underline{u}\), and from Lemma~\ref{lem:holder}, \(\overline{u} - \underline{u} \leq 2 C( 1 + \frac{2}{2^\gamma-1}) (4r)^\gamma\). 
Therefore
\begin{align} \label{eq:improved-mean-value-estimate}
	m(y_1,\overline{v} + h) - m(y_2, \overline{v}) \geq \frac{2^\gamma -1}{2 \cdot 4^\gamma (2^\gamma +1)} \frac{h}{C r^\gamma} \abs{B_r} .
\end{align}

\textbf{2.}
We are now free to repeat the same argument as Steps~\textbf{2}--\textbf{3} from the proof of Lemma~\ref{lem:h-estimate} using~\eqref{eq:improved-mean-value-estimate} in place of~\eqref{eq:mean-value-estimate}.
Doing so we obtain
{
\begin{align}
0 &\leq 24 \norm{g}_{L^\infty}\frac{r^2}h - \frac{2^\gamma -1}{2 C \cdot 4^\gamma (2^\gamma + 1)} r^{1-\gamma} h^2 
\end{align}
and therefore 
\begin{align}
	h^3 \leq 24  \frac{2 C \cdot 4^\gamma (2^\gamma +1)}{2^\gamma-1} \norm{g}_{L^\infty(\R \times \R)}  r^{1+\gamma}.
\end{align}
%
}
\qed

\vskip\baselineskip
The following statement is the main result of this section.

\begin{corollary} \label{cor:12holder}
Let \(u \in C(\R \times \R; [0,1])\) be as in Lemma~\ref{lem:h-estimate}.
Then \(u(t,\cdot)\) is \(\frac12\)-H{\"o}lder with constant at most \(100 \cdot \big( 20 \, \norm{g}_{L^\infty(\R\times\R)} \big)^{\frac12}\).
\end{corollary}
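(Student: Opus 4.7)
The plan is to iterate Lemma~\ref{lem:improved-holder} on top of the base bound of Lemma~\ref{lem:h-estimate}. Define recursively $\gamma_0 \defeq \frac13$, $\gamma_{n+1} \defeq \frac{1+\gamma_n}{3}$, and the sequence of constants $C_0 \defeq (24\norm{g}_{L^\infty})^{1/3}$ with $C_{n+1} \defeq 10\bigl(2 C_n\norm{g}_{L^\infty}\bigr)^{1/3}$. The interval $[\tfrac13,\tfrac12)$ is invariant under $\gamma\mapsto\frac{1+\gamma}{3}$, so $\gamma_n\in[\tfrac13,\tfrac12)$ for every $n\in\N$ and the sharp-constant branch of Lemma~\ref{lem:improved-holder} applies at every step; by induction, $h_r(t,x)\le C_n r^{\gamma_n}$ uniformly in $(t,x)\in\R\times\R$ and $r>0$.

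Next I would analyse the two recursions. The exponents $\gamma_n$ form a monotone increasing sequence converging to the unique fixed point $\gamma^*=\tfrac12$ of $\gamma\mapsto\frac{1+\gamma}{3}$. The map $C\mapsto 10(2 C\norm{g}_{L^\infty})^{1/3}$ is monotone and concave on $(0,\infty)$, with a single positive fixed point
\begin{align*}
    C^* = 10^{3/2}\cdot 2^{1/2}\,\norm{g}_{L^\infty}^{1/2} = 10\sqrt{20\norm{g}_{L^\infty}},
\end{align*}
to which $(C_n)$ converges monotonically (from above or below depending on whether $C_0$ exceeds $C^*$). In particular $(C_n)$ is bounded and $C_n\to C^*$.

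To conclude, for each $n$ I would apply Lemma~\ref{lem:holder} with exponent $\gamma_n\in[\tfrac13,\tfrac12)$ and constant $C_n$ to deduce that $u(t,\cdot)$ is $\gamma_n$-H{\"o}lder continuous with constant at most $10 C_n$; that is, for every $x,y\in\R$,
\begin{align*}
    |u(t,x)-u(t,y)| \le 10 C_n\,|x-y|^{\gamma_n}.
\end{align*}
Passing to the limit $n\to\infty$ at fixed $x,y$, using $\gamma_n\nearrow\frac12$ and $C_n\to C^*$, yields
\begin{align*}
    |u(t,x)-u(t,y)| \le 10 C^*\,|x-y|^{1/2} = 100\sqrt{20\norm{g}_{L^\infty}}\,|x-y|^{1/2},
\end{align*}
which is exactly the claimed H{\"o}lder bound.

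The only delicate point is matching the stated numerical constant: one must invoke Lemma~\ref{lem:holder} at each sub-critical exponent $\gamma_n<\tfrac12$, where the clean factor $10 C_n$ is available, and only then pass to the limit. Applying Lemma~\ref{lem:holder} directly at the critical exponent $\gamma=\tfrac12$ on the limit estimate $h_r\le C^* r^{1/2}$ would introduce the strictly larger multiplicative factor $2(3+2\sqrt{2})>10$ and miss the target; the bookkeeping that makes the iteration scheme close up at the value $100\sqrt{20\norm{g}_{L^\infty}}$ is what the sharper constant clauses in Lemma~\ref{lem:h-estimate}, Lemma~\ref{lem:improved-holder}, and Lemma~\ref{lem:holder} were designed to enable.
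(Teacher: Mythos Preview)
Your argument is correct and follows essentially the same route as the paper: start from Lemma~\ref{lem:h-estimate}, iterate Lemma~\ref{lem:improved-holder} to get $h_r\le C_n r^{\gamma_n}$ with $\gamma_n\uparrow\tfrac12$, convert via Lemma~\ref{lem:holder} to $\gamma_n$-H\"older bounds with constants $10C_n$, and pass to the limit. Your explicit identification of $C^*=10\sqrt{20\norm{g}_{L^\infty}}$ as the attracting fixed point of $C\mapsto10(2C\norm{g}_{L^\infty})^{1/3}$ is a clean way to phrase what the paper obtains via the closed-form expression $C_{n+1}\le C_1^{3^{-n}}(2000\norm{g}_{L^\infty})^{(1-3^{-n})/2}$.
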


\textit{Proof of Corollary~\ref{cor:12holder}.}
Fix \((t,x) \in \R \times \R\).
By Lemma~\ref{lem:h-estimate}, we have that \(h_r(t,x) \leq C_1 r^{\frac13}\), with \(C_1 \leq (24\norm{g}_{L^\infty(\R\times\R)})^{\frac13}\).
Starting from this estimate, we can apply Lemma~\ref{lem:improved-holder} iteratively to obtain \(h_r(t,x) \leq C_n r^{\gamma_n}\) for \(n \in \N\), where \(\gamma_n \defeq \frac12 (1- 3^{-n})\), and the constants \(C_n\) satisfy
\begin{align}
\begin{aligned}
	C_{n+1} \leq 10 \cdot (2 C_n \norm{g}_{L^\infty(\R\times\R)})^{\frac13} 
	&\leq C_1^{3^{-n}} \cdot \big( 2000 \, \norm{g}_{L^\infty(\R\times\R)} \big)^{\frac12(1-3^{-n})} \\
	&\leq 10 \sqrt{20} \cdot ( 1 \vee C_1 ) \cdot \big( 1 \vee \norm{g}_{L^\infty(\R \times \R)}^{\frac12}  \big) .
\end{aligned}
\end{align}
Using Lemma~\ref{lem:holder}, for each \(n\in \N\), we have that \(u(t,\cdot)\) is \(\gamma_n\)-H{\"o}lder with constant at most \(10 C_n\), which is uniformly bounded by the above.
Therefore we can simply pass to the limit to obtain that \(u(t,\cdot)\) is \(\frac12\)-H{\"o}lder, with constant at most \(10 \cdot \limsup\limits_{n\to\infty} C_n \leq 10 \cdot \big( 2000 \, \norm{g}_{L^\infty(\R\times\R)} \big)^{\frac12}\).%
\qed

\section{$\alpha$-nonlinear Multi-Dimensional Case} \label{L:mdclNonl}
We begin by modifying slightly a general decomposition lemma, which can be found in~\cite[Lemma~4.2]{Mar-str-wsol}, to a form that is directly applicable to our problem.
\begin{lemma}[{\cite[Lemma~4.2]{Mar-str-wsol}}] \label{lem:decomposition-lemma}
	Let \(f \in C^1([0,1];\R^d)\) satisfy the standard nonlinearity Assumption~\ref{ass:nl1}.
	There exists a constant \(\widetilde{C}\), depending only on \((d,\norm{f'}_{L^\infty},C)\) such that the following statement holds.
	For any \(v , h  > 0\), with \(v + h < 1\), there exist \(v_1 < \cdots < v_{d+1} \in [v, v+h]\) satisfying \(\abs{v_{i+1} - v_i} \geq \frac{h}{2(d+1)}\) such that, for every \(a \in \R^d\), there exist \(a_1, \cdots, a_{d+1} \in \R\) with
	\begin{align} \label{eq:estimate-of-coefficients}
		a = \sum_{i=1}^{d+1} a_i f'(v_i),
		\qquad 
		0 = \sum_{i=1}^{d+1} a_i,
		\qquad
		\abs{a_i} \leq \widetilde{C} \frac{\abs{a}}{h^{d/\alpha}} .
	\end{align}	
\end{lemma}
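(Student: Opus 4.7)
The plan is to reformulate the decomposition as an invertibility statement. Setting $b \defeq (0, a)^\transpose \in \R^{d+1}$, the pair of conditions ``$a = \sum_i a_i f'(v_i)$ and $\sum_i a_i = 0$'' is equivalent to $W \mathbf{a} = b$, where $W = W(f,v)$ is the $(d{+}1)\times (d{+}1)$ matrix whose $i$-th column is $(1, f'(v_i))^\transpose$. By Cramer's rule together with the Hadamard inequality applied to the cofactor minors,
\begin{equation*}
|a_i| \;\leq\; \frac{\bigl(1 + \|f'\|_{L^\infty}^2\bigr)^{d/2}\, |a|}{|\det W(f,v)|},
\end{equation*}
so the entire task reduces to selecting the nodes $v_1 < \cdots < v_{d+1}$ in $[v, v+h]$, with the prescribed spacing, in such a way that $|\det W| \gtrsim h^{d/\alpha}$.

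To set up the selection, I partition $[v, v+h]$ into the $d{+}1$ disjoint intervals $J_i \defeq \bigl[v + (i{-}1)\tfrac{h}{d+1},\; v + (i{-}1)\tfrac{h}{d+1} + \tfrac{h}{2(d+1)}\bigr]$ for $i = 1,\dots,d+1$. Each $J_i$ has length $\tfrac{h}{2(d+1)}$, and the gap between consecutive ones is also $\tfrac{h}{2(d+1)}$; thus any choice $v_i \in J_i$ automatically yields $v_i < v_{i+1}$ and $v_{i+1} - v_i \geq \tfrac{h}{2(d+1)}$. The nodes are then chosen by induction on $k$: pick $v_1 \in J_1$ arbitrarily, and assuming $v_1, \dots, v_k$ have been selected with the vectors $(1, f'(v_1)), \ldots, (1, f'(v_k))$ linearly independent, let $V_k$ be their span and let $\nu_k = (\tau_k, \xi_k) \in V_k^\perp$ be a unit normal. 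By Assumption~\ref{ass:nl1} applied to $(\tau_k, \xi_k)$, the bad set
\begin{equation*}
\bigl\{v \in [0,1] : \bigl|\tau_k + f'(v)\cdot \xi_k\bigr| < \delta_0 \bigr\}, \qquad \delta_0 \defeq \Bigl(\tfrac{h}{4C(d+1)}\Bigr)^{1/\alpha},
\end{equation*}
has $\mathcal{L}^1$-measure at most $\tfrac{h}{4(d+1)}$, which is strictly smaller than $|J_{k+1}| = \tfrac{h}{2(d+1)}$. Hence there exists $v_{k+1} \in J_{k+1}$ with $|\nu_k \cdot (1, f'(v_{k+1}))| \geq \delta_0$, i.e.\ the Euclidean distance from $(1, f'(v_{k+1}))$ to $V_k$ is at least $\delta_0$, so linear independence persists at step $k{+}1$.

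Iterating yields the geometric lower bound
\begin{equation*}
|\det W(f,v)| \;=\; \prod_{k=0}^{d} \mathrm{dist}\bigl((1, f'(v_{k+1})), V_k\bigr) \;\geq\; 1 \cdot \delta_0^{\,d} \;=\; \Bigl(\tfrac{h}{4C(d+1)}\Bigr)^{d/\alpha},
\end{equation*}
which fed into the Cramer estimate proves~\eqref{eq:estimate-of-coefficients} with a constant $\widetilde{C}$ depending only on $(d, \|f'\|_{L^\infty}, C, \alpha)$. The main technical subtlety lies in the inductive step: one must ensure the unit normal $\nu_k$ is genuinely well-defined, which requires the previously selected vectors to be linearly independent (a condition propagated by the strict positivity of $\delta_0$), and that the nonlinearity bound is sharp enough to force a good point inside the small subinterval $J_{k+1}$. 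This is precisely why the partition has $d{+}1$ intervals of width proportional to $h$, and it is exactly the mechanism by which the exponent $d/\alpha$ arises in the estimate.
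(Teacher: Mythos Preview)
Your proof is correct and matches the approach the paper invokes from \cite{Mar-str-wsol}: lift to $\R^{d+1}$ via $(1,f'(v))$, inductively select $v_i$ in well-separated subintervals so that each new vector stays at distance $\gtrsim h^{1/\alpha}$ from the span of the previous ones (using Assumption~\ref{ass:nl1}), and then invert the resulting matrix. The paper attributes the gain from $h^{(d+1)/\alpha}$ to $h^{d/\alpha}$ to the zero-sum constraint $\sum a_i=0$ eliminating the ``final step'' in an iterated scheme; in your Cramer--Hadamard presentation the same gain appears transparently as the factor $\|(1,f'(v_1))\|\geq 1$ in the product formula for $|\det W|$.
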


\textit{Proof of Lemma~\ref{lem:decomposition-lemma}.}
The proof proceeds exactly as~\cite[Lemma~4.2]{Mar-str-wsol} applied to the flux \(v \mapsto (v,f(v))\) in \(\R \times \R^d\).
Indeed, we obtain appropriate values \(v_1 < \cdots < v_{d+1} \in [v,v+h]\), satisfying \(\abs{v_{i+1} - v_i} \geq \frac{h}{2(d+1)}\), such that \(\{(1,f'(v_i)) : 1 \leq i \leq d+1\}\) is a spanning set of \(\R \times \R^d\).
Then, for any \((0,a) \in \R \times \R^d\), there exist coefficients \(\{a_i \in \R : 1 \leq i \leq d+1\}\) with \((0,a) = \sum_{i=1}^{d+1} a_i (1,f'(v_i))\).
The slightly improved estimate on the coefficients \(\{a_i\}\) stems from the fact that \(\sum_{i=1}^{d+1} a_i = 0\), and therefore \(\abs{a_1} = \abs{\sum_{i = 2}^{d+1} a_i}\), meaning that there is no increase in magnitude during the final step of the iterated estimates.
\hfill \qed

Next, we modify the estimate~\cite[Lemma~4.3]{Mar-str-wsol} to a form that is directly applicable to our problem, taking into account also the small change in Lemma~\ref{lem:decomposition-lemma} above.
For later purposes, it is also necessary to modify slightly the definition of \(h_r\) given in~\cite[Eq.~(4.4)]{Mar-str-wsol} as follows.
\begin{definition} \label{def:h-cube}
	For any \(r > 0\), and \((t,x) \in \R \times \R^d\), we write
    \begin{align}
        h_r (t,x) \defeq \frac12\, {
        \max_{y_1,y_2 \in {\overline{Q_r(x)}}} \Big( (u(t,\cdot))_{Q_r(y_1)} - (u(t,\cdot))_{Q_r(y_2)} \Big) }\in \Big[0,\frac12\Big], 
    \end{align}
    where \((u(t,\cdot))_E \defeq \frac{1}{\abs{E}} \int_E u(t,y) \, \dx y\) for any measurable \(E \subset \R^d \), and \(Q_r(x) \defeq \{ y \in \R^d : \abs{y-x}_\infty < r \}\).
\end{definition}

\begin{lemma}[{\cite[Lemma~4.3]{Mar-str-wsol}}] \label{lem:estimate-cylinder}
	There exists a constant \(C_\omega > 0\), depending only on \((d, \norm{f''}_{L^\infty}, \widetilde{C}) \) such that, for any \(r > 0\), and \((t,x) \in \R \times \R^d\), there exists \((\tilde{t},\tilde{x}) \in \R \times \R^d\), \(\tilde{v} \in [0,1)\), and \(\tilde{a} \in \R\) satisfying
	\begin{align} \label{eq:estimate-of-tildes}
		\abs{\tilde{a}} \leq \widetilde{C} \frac{2 r \sqrt{d}}{h_r(t,x)^{d/\alpha}},
		\qquad
		\abs{(\tilde{t},\tilde{x}) - (t,x)} \leq r \sqrt{d} + (d+1) (1 + \norm{f'}_{L^\infty}) \widetilde{C} \frac{2 r \sqrt{d}}{h_r(t,x)^{d/\alpha}},
	\end{align}
	such that, for any \(\omega \in (0, C_\omega \, h_r(t,x)^{1+\frac{d}{\alpha}}]\), we have
	\begin{align} \label{eq:estimate-cylinder}
		\mathcal{L}^{d+1}\big(\mathcal{Q} \cap \hyp u(\tilde{t}) \big) - 
		\mathcal{L}^{d+1}\big(\mathcal{Q} \cap \mathsf{FT}(\hyp u(\tilde{t}-\tilde{a}) ; \tilde{a})\big)
		\geq \frac{\omega h_r(t,x) \abs{Q_r}}{2(d+1)} ,
	\end{align}
	where the cuboid \(\mathcal{Q} \defeq Q_r(\tilde{x}) \times [\tilde{v},\tilde{v} + \omega]\).
\end{lemma}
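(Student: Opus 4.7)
The plan is to follow the template of Lemma~\ref{lem:h-estimate} (the one-dimensional Burgers case), with the multi-dimensional structure handled via the decomposition Lemma~\ref{lem:decomposition-lemma}, and to extract the desired quadruple $(\tilde t, \tilde x, \tilde v, \tilde a)$ through a pigeonhole argument along a discrete transport path.

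First, fix $(t, x)$ and set $h \defeq h_r(t,x)$, assumed positive; select $y_1, y_2 \in \overline{Q_r(x)}$ realising the maximum in Definition~\ref{def:h-cube}. An integration-in-$v$ mean-value argument in the spirit of Step~\textbf{1} of the proof of Lemma~\ref{lem:h-estimate}, applied to the non-increasing function $m(y, v) \defeq \mathcal{L}^d(\{z \in Q_r(y) : u(t, z) > v\})$, produces $\bar v \in (0, 1-h)$ with $m(y_1, \bar v + h) - m(y_2, \bar v) > h \abs{Q_r}$. Then I apply Lemma~\ref{lem:decomposition-lemma} to $a \defeq y_1 - y_2$ at base $\bar v$ and scale $h$ (using $\abs{a} \leq 2r\sqrt{d}$), obtaining kinetic levels $v_1 < \cdots < v_{d+1} \in [\bar v, \bar v + h]$ separated by at least $h/(2(d+1))$ and coefficients $a_1, \ldots, a_{d+1}$ with $\sum a_i = 0$, $\sum a_i f'(v_i) = y_1 - y_2$, and $\abs{a_i} \leq \widetilde{C} \cdot 2 r \sqrt{d} / h^{d/\alpha}$.

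Next, build a discrete space-time path $\{(t_i, x_i)\}_{i=0}^{d+1}$ via $(t_0, x_0) \defeq (t, y_2)$ and $t_i \defeq t_{i-1} + a_i$, $x_i \defeq x_{i-1} + a_i f'(v_i)$. The sum conditions give $(t_{d+1}, x_{d+1}) = (t, y_1)$, and a direct computation shows that every $(t_i, x_i)$ lies within the distance from $(t, x)$ asserted in~\eqref{eq:estimate-of-tildes}. Pick an increasing sequence $V_0 < V_1 < \cdots < V_{d+1}$ in $[\bar v, \bar v + h]$ with $V_0 = \bar v$, $V_{d+1} = \bar v + h$, and $V_i$ within the band $[v_i-\omega/2, v_i+\omega/2]$ for intermediate $i$; this is feasible because $\omega \leq C_\omega h^{1+d/\alpha} \leq h/(2(d+1))$ for $C_\omega$ small and $h \leq 1$. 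Define the cuboids $\mathcal{Q}_i \defeq Q_r(x_i) \times [V_i, V_i + \omega]$ at time $t_i$.

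For each $i \in \{1, \ldots, d+1\}$, test the kinetic formulation~\eqref{eq:kinetic} against a cut-off function adapted to the free-transport tube from $\mathcal{Q}_i$ back to time $t_{i-1}$ at level $v_i$, exactly as in Step~\textbf{2} of the proof of Lemma~\ref{lem:h-estimate}. This yields
\begin{equation*}
\mathcal{L}^{d+1}\big(\mathcal{Q}_i \cap \hyp u(t_i)\big) - \mathcal{L}^{d+1}\big(\mathcal{Q}_i \cap \mathsf{FT}(\hyp u(t_{i-1}); a_i)\big) = \int \varphi_i \, \dx \mu_0 \, ,
\end{equation*}
up to a slab-variation discrepancy arising from $f'(v) \neq f'(v_i)$ within $[V_i, V_i + \omega]$, controlled by $r^{d-1} \omega^2 \abs{a_i} \norm{f''}_{L^\infty}$. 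The monotonicity of $v \mapsto m(x_{i-1}, v)$ lets one compare $\mathcal{L}^{d+1}(\mathcal{Q}_i \cap \mathsf{FT}(\hyp u(t_{i-1}); a_i))$ with $\mathcal{L}^{d+1}(\mathcal{Q}_{i-1} \cap \hyp u(t_{i-1}))$: the spatial backward-transport at level $v_i$ lands the box back on $Q_r(x_{i-1})$, and the $v$-jump from $V_{i-1}$ to $V_i > V_{i-1}$ is one-sided.

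The main obstacle is the telescoping together with the absorption of errors. Summing the per-step inequalities over $i = 1, \ldots, d+1$, the left-hand side telescopes and bounds from below
\begin{equation*}
\mathcal{L}^{d+1}(\mathcal{Q}_{d+1} \cap \hyp u(t)) - \mathcal{L}^{d+1}(\mathcal{Q}_0 \cap \hyp u(t)) - E \, ,
\end{equation*}
where (with $\bar v$ chosen from a slab-integrated version of the mean-value argument) the endpoint difference is at least $\omega h \abs{Q_r}$, and the cumulative slab-variation error satisfies $E \lesssim r^d \omega^2 / h^{d/\alpha}$. The restriction $\omega \leq C_\omega h^{1+d/\alpha}$ is calibrated precisely so that $E \leq \omega h \abs{Q_r}/4$. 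The remaining total, at least $\omega h \abs{Q_r}/2$, is distributed across $d+1$ terms, so pigeonhole provides $i^* \in \{1, \ldots, d+1\}$ with per-step contribution at least $\omega h \abs{Q_r}/(2(d+1))$. Setting $\tilde t \defeq t_{i^*}$, $\tilde x \defeq x_{i^*}$, $\tilde v \defeq V_{i^*}$, $\tilde a \defeq a_{i^*}$ yields the desired quadruple, and the bounds~\eqref{eq:estimate-of-tildes} follow directly from the construction.
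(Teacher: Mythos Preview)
Your setup through the mean-value step and the application of Lemma~\ref{lem:decomposition-lemma} to build the path $(t_i,x_i)$ matches the paper. The gap is at the pigeonhole. The lemma asserts the existence of a single quadruple $(\tilde t,\tilde x,\tilde v,\tilde a)$ such that~\eqref{eq:estimate-cylinder} holds \emph{for every} $\omega$ in the range; in your argument the cuboids $\mathcal{Q}_i$, the slab-integrated $\bar v$, and hence the pigeonhole index $i^\ast$ all depend on $\omega$, so the quadruple you output is $\omega$-dependent and the quantifiers are inverted. The paper avoids this by applying pigeonhole one level earlier, to the single-level telescoping
\[
m(t,y_1,\bar v+h)-m(t,y_2,\bar v)\ \le\ \sum_{i=1}^{d+1}\bigl(m(t_i,x_i,v_{i+1})-m(t_{i-1},x_{i-1},v_i)\bigr)
\]
(with $v_0\defeq\bar v$, $v_{d+2}\defeq\bar v+h$), selecting the index $\ell$ \emph{before} $\omega$ enters. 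With $(\tilde t,\tilde x,\tilde v,\tilde a)\defeq(t_\ell,x_\ell,v_\ell,a_\ell)$ now fixed, one writes the left side of~\eqref{eq:estimate-cylinder} as $\int_0^\omega\bigl(m(\tilde t,\tilde x,\tilde v+v)-m(\tilde t-\tilde a,\tilde x-\tilde a f'(\tilde v+v),\tilde v+v)\bigr)\,\dx v$ and bounds the integrand pointwise: insert $m(\tilde t,\tilde x,v_{\ell+1})$ and $m(\tilde t-\tilde a,\tilde x-\tilde a f'(\tilde v),\tilde v)=m(t_{\ell-1},x_{\ell-1},v_\ell)$, use monotonicity of $m$ in $v$ (since $v_\ell\le\tilde v+v\le v_{\ell+1}$), and control the residual cube-shift by $|m(s,y,w)-m(s,y',w)|\le C_d\,r^{d-1}|y-y'|$ with $|y-y'|\le|\tilde a|\,\omega\,\norm{f''}_{L^\infty}$. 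This yields~\eqref{eq:estimate-cylinder} for all admissible $\omega$ simultaneously.

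A second, structural point: this lemma is purely measure-theoretic and uses no PDE. Your appeal to the kinetic formulation and to $\int\varphi_i\,\dx\mu_0$ is misplaced; that identity is the content of Lemma~\ref{lem:test-function-estimate}, where it supplies an \emph{upper} bound on the same measure difference via $\norm{g}_{L^\infty}$. In the present lemma you need a \emph{lower} bound, and it comes entirely from the level-set monotonicity and the pigeonhole---no test functions and no $\mu_0$ are required.
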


\textit{Proof of Lemma~\ref{lem:estimate-cylinder}.}
\textbf{1.}
Fix \(r>0\), \((t,x) \in \R \times \R^d\), and let \(y_1, y_2 \in \cl{Q_r(x)}\) be chosen to realise the maximum in the definition of \(h=h_r(t,x)\).
In the following, we exclusively write \(h \defeq h_r(t,x)\).

For any \((s,y,v) \in \R \times \R^d \times \R\), write
\begin{align}
	m(s,y,v) \defeq \mathcal{L}^d \big( \{ z \in Q_r(y) : u(s,z) > v \} \big) .
\end{align}
Then
\begin{align}\label{e:1}
    \begin{aligned}
        2 h \abs{Q_r}
		&= \int_0^1 \big( m(t,y_1,v) - m(t,y_2,v) \big) \, \dx v \\
        &= \int_0^{1-h} \big( m(t,y_1,v + h) - m(t,y_2,v) \big) \, \dx v
        + \int_0^h m(t,y_1,v) \,\dx v
        - \int_{1-h}^1 m(t,y_2,v) \,\dx v \\
        &\leq 
        \int_0^{1-h} \big( m(t,y_1,v + h) - m(t,y_2,v) \big) \, \dx v + h \abs{Q_r} .
    \end{aligned}
\end{align}
Therefore, there exists at least one value \(\overline{v} \in (0,1-h)\) such that 
\begin{align} \label{eq:mean-value-estimate2}
    m(t,y_1,\overline{v}+h) - m(t,y_2,\overline{v}) > h \abs{Q_r} .
\end{align}

\textbf{2.}
Write \(a \defeq y_1 - y_2\).
Using Lemma~\ref{lem:decomposition-lemma}, we find that there exist \(v_1 < \cdots < v_{d+1} \in [\overline{v},\overline{v}+h]\) and coefficients \(\{a_i \in \R : 1 \leq i \leq d+1\}\) such that \(\sum_{i = 1}^{d+1} a_i = 0\), and \( a = \sum_{i=1}^{d+1} a_i f'(v_i)\).
Set \((t_0,x_0) \defeq (t,y_2)\), and iteratively \((t_i,x_i) \defeq (t_{i-1},x_{i-1}) + a_i (1,f'(v_i))\) for \(1 \leq i \leq d+1\).
Then \((t_{d+1},x_{d+1}) = (t,y_1)\) by construction.
Moreover, upon writing \(v_0 \defeq \overline{v}\) and \(v_{d+2} \defeq \overline{v} + h\), we have
\begin{align}
\begin{aligned}
	h\abs{Q_r} < m(t_{d+1},x_{d+1},v_{d+2}) - m(t_0,x_0,v_0) 
	&\leq m(t_{d+1},x_{d+1},v_{d+2}) - m(t_0,x_0,v_1) \\
	&= \sum_{i = 1}^{d+1} \big( m(t_i,x_i,v_{i+1}) - m(t_{i-1},x_{i-1},v_i) \big) .
\end{aligned}
\end{align}
In particular, there exists a choice \(\ell \in \{1, \cdots, d+1 \}\) such that
\begin{align} \label{eq:choice-of-ell}
m(t_\ell,x_\ell,v_{\ell+1}) - m(t_{\ell-1},x_{\ell-1},v_\ell) > \frac{h \abs{Q_r}}{d+1} .
\end{align}
Consequently we set \((\tilde{t},\tilde{x}) \defeq (t_\ell,x_\ell)\), \(\tilde{v} \defeq v_\ell\), and \(\tilde{a} \defeq a_\ell\).
The first estimate in~\eqref{eq:estimate-of-tildes} follows directly from~\eqref{eq:estimate-of-coefficients}, whilst for the second estimate we have
\begin{align}
	(\tilde{t},\tilde{x}) = (t,y_2) + \sum_{i=1}^{\ell} a_i (1, f'(v_i)) ,
\end{align}
and therefore 
\begin{align}
\abs{(\tilde{t},\tilde{x}) - (t,x)} \leq \abs{y_2 - x} + \sum_{i = 1}^{\ell} \abs{a_i} (1 + \abs{f'(v_i)}) \leq r \sqrt{d} + (d+1) (1 + \norm{f'}_{L^\infty}) \widetilde{C} \frac{2 r \sqrt{d}}{h^{d/\alpha}}.
\end{align}

\textbf{3.}
For some \(\omega \in (0, \frac{h}{2(d+1)})\) to be determined later, consider the cuboid \(\mathcal{Q} \defeq Q_r(\tilde{x}) \times [\tilde{v},\tilde{v} + \omega]\).
We can estimate
\begin{align} \label{eq:estimate-cylinder-proof}
\begin{aligned}
	& \mathcal{L}^{d+1}\big( \mathcal{Q} \cap \hyp u(\tilde{t}) \big) - \mathcal{L}^{d+1} \big( \mathcal{Q} \cap \mathsf{FT}( \hyp u(\tilde{t}-\tilde{a}); \tilde{a} ) \big) \\
	&\qquad \qquad \qquad
	= \int_0^{\omega} \big( m(\tilde{t},\tilde{x}, \tilde{v} + v) - m(\tilde{t}-\tilde{a}, \tilde{x} - \tilde{a} f'(\tilde{v}+v), \tilde{v} + v) \big) \, \dx v .
\end{aligned}
\end{align}
Therefore, it is sufficient to estimate the quantity
\begin{align}
	m(\tilde{t},\tilde{x}, v) - m(\tilde{t}-\tilde{a}, \tilde{x} - \tilde{a} f'(v), v)
\end{align}
for any \(v \in [\tilde{v},\tilde{v}+\omega]\).
Using~\eqref{eq:choice-of-ell}, and the monotonicity of \(m\) with respect to \(v\), we have
\begin{align} \label{eq:m-prime-estimate}
\begin{aligned}
	m(\tilde{t},\tilde{x}, v) &- m(\tilde{t}-\tilde{a}, \tilde{x} - \tilde{a} f'(v), v) \\
	&\qquad = m(\tilde{t},\tilde{x}, v) - m(\tilde{t},\tilde{x}, v_{\ell+1}) + m(\tilde{t},\tilde{x}, v_{\ell+1})
	- m(\tilde{t}-\tilde{a},\tilde{x}-\tilde{a}f'(\tilde{v}), \tilde{v}) \\
	&\qquad \quad + m(\tilde{t}-\tilde{a},\tilde{x}-\tilde{a}f'(\tilde{v}), \tilde{v}) 
	- m(\tilde{t}-\tilde{a},\tilde{x}-\tilde{a}f'(v), \tilde{v}) \\
	&\qquad \quad + m(\tilde{t}-\tilde{a},\tilde{x}-\tilde{a}f'(v), \tilde{v}) 
	- m(\tilde{t}-\tilde{a},\tilde{x}-\tilde{a}f'(v), v) \\
	&\qquad \geq \frac{h\abs{Q_r}}{d+1} + m(\tilde{t}-\tilde{a},\tilde{x}-\tilde{a}f'(\tilde{v}), \tilde{v}) -  m(\tilde{t}-\tilde{a},\tilde{x}-\tilde{a}f'(v), \tilde{v}) ,
\end{aligned}
\end{align}
since \(v_\ell = \tilde{v} \leq v \leq \tilde{v}+\omega < v_{\ell+1}\).
Clearly we have
\begin{align}
\begin{aligned}
	\abs{ m(\tilde{t}-\tilde{a},\tilde{x}-\tilde{a}f'(\tilde{v}), \tilde{v}) -  m(\tilde{t}-\tilde{a},\tilde{x}-\tilde{a}f'(v), \tilde{v}) } 
	&\leq \abs{ Q_r(\tilde{x}-\tilde{a} f'(\tilde{v})) \,\triangle \, Q_r(\tilde{x}-\tilde{a} f'(v)) }\\
	&\leq 2( (2r)^d - (2r - \abs{\tilde{a}} \abs{v - \tilde{v}} \norm{f''}_{L^\infty})^d ) \\
	&\leq C_d \, r^{d-1} \abs{\tilde{a}} \abs{v - \tilde{v}} \norm{f''}_{L^\infty},
\end{aligned}
\end{align}
whenever \( \abs{\tilde{a}} \abs{v - \tilde{v}} \norm{f''}_{L^\infty} \leq 2r\), for some absolute constant \(C_d >0\) depending only on \(d\).
Combining this with the first estimate in~\eqref{eq:estimate-of-tildes}, we have
\begin{align}\label{e:altro}
	\abs{ m(\tilde{t}-\tilde{a},\tilde{x}-\tilde{a}f'(\tilde{v}), \tilde{v}) -  m(\tilde{t}-\tilde{a},\tilde{x}-\tilde{a}f'(v), \tilde{v}) } 
	\leq
	C_d \, r^d \, \omega \, \widetilde{C} \frac{2\sqrt{d}}{h^{d/\alpha}} \norm{f''}_{L^\infty}.
\end{align}
Therefore, the estimate~\eqref{eq:estimate-cylinder} follows immediately from~\eqref{eq:estimate-cylinder-proof} and~\eqref{eq:m-prime-estimate} whenever \(\omega > 0\) is chosen small enough so that
\begin{align}\label{e:constraintwomega}
	\omega \leq \frac{h^{d/\alpha}}{\widetilde{C}\sqrt{d}\norm{f''}_{L^\infty}} 
	\wedge 
	\frac{2^d \, h^{1 + d/\alpha} }{4 C_d \widetilde{C} (d+1) \sqrt{d} \norm{f''}_{L^\infty}}
	\wedge \frac{h}{2(d+1)} .
\end{align}
Since \(h \in [0,1]\), it is sufficient to take the constant \(C_\omega\) in~\eqref{eq:estimate-cylinder} to be
\begin{align}
	C_\omega = \frac{2^d}{4 C_d \widetilde{C} (d+1) \sqrt{d} \norm{f''}_{L^\infty}} 
	\wedge \frac{1}{\widetilde{C} \sqrt{d} \norm{f''}_{L^\infty}} 
	\wedge \frac{1}{2(d+1)}.
\end{align}
\qed

In the next lemma, we adapt and improve slightly the estimate~\cite[Proposition~5.6]{Mar-str-wsol} to a form that is directly applicable to our problem.
First, let us fix a family of smooth functions \(\psi_{a,b} \in C^\infty_c([0,\infty); [0,1])\) satisfying 
\begin{align}
	\psi_{a,b}(s) = \begin{cases}
	1 \quad & \text{if \(s \in [0,a]\)} \\
	0 \quad & \text{if \(s \in [a + \frac{b}{2}, \infty)\)}
	\end{cases}
	\qquad -\frac{4}{b} \leq \psi'(s) \leq 0 .
\end{align}


\begin{lemma}[{\cite[Proposition~5.6]{Mar-str-wsol}}] \label{lem:test-function-estimate}
	Let \(u \in C(\R\times\R^d ; [0,1])\) be a continuous and isentropic solution of the scalar balance law~\eqref{eq:cl} with bounded measurable source \(g \in L^\infty(\R\times \R^d)\), and flux \(f \in C^2([0,1];\R^d)\) satisfying the standard non-degeneracy Assumption~\ref{ass:nl1}.
	Fix \(r > 0\), \((\tilde{t},\tilde{x}) \in \R \times \R^d\), \(\tilde{v} \in [0,1)\), \(\omega \in (0,1-\tilde{v}]\), and \(\tilde{a} \in \R\).
	Moreover, for any \(r' \leq r\), \(\omega' \leq \omega\), fix the test function \(\phi \in C^\infty_c(\R^d \times (0,\infty) ; [0,1])\)
	\begin{align} \label{eq:test-phi}
		\phi(x,v) \defeq \prod_{i = 1}^{d} \psi_{r,r'}\big( \abs{(x-\tilde{x}) \cdot \re_i} \big) \psi_{\frac{\omega}{2},\omega'} \big( \abs{v - \tilde{v} - \frac{\omega}{2}} \big) ,
	\end{align}
	with \(\re_i \in \R^d\) the standard unit basis vectors.
	Then
	\begin{align} \label{eq:test-function-estimate}
	\begin{aligned}
		\int_{\R^d \times [0,\infty)} \phi(x,v) \big( \chi_{\hyp u(\tilde{t})} - \chi_{\mathsf{FT}(\hyp u(\tilde{t}-\tilde{a});\tilde{a})} \big) \,\dx x \dx v 
		\leq 
		\abs{\tilde{a}} \norm{g}_{L^\infty} \abs{Q_{r + r' + \abs{\tilde{a}} (\frac{\omega}{2} + \omega') \norm{f''}_{L^\infty} }} .
	\end{aligned}
	\end{align}
\end{lemma}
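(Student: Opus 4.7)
The approach is to adapt \textbf{Step~2} in the proof of Lemma~\ref{lem:h-estimate} (the one-dimensional Burgers case) to the multidimensional setting. The central idea is to extend the given spatial cutoff $\phi$ to a space-time test function that is transported along the free kinetic flow, so that when we test the kinetic identity~\eqref{eq:kinetic} the transport term vanishes identically and only the traces at the two time endpoints and the source contribution survive.

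I would set
\begin{align*}
	\varphi(t,x,v) \defeq \phi\big(x - (t - \tilde{t}) f'(v),\, v\big),
\end{align*}
which satisfies $\pd_t \varphi + f'(v) \cdot \grad_{\bm{x}} \varphi = 0$ pointwise and $\varphi(\tilde{t},\cdot,\cdot) = \phi$. Without loss of generality $\tilde{a} \geq 0$ (otherwise the roles of the two time levels are interchanged). Testing~\eqref{eq:kinetic} against a smooth approximation of $\varphi(t,x,v)\,\mathbf{1}_{[\tilde{t}-\tilde{a},\tilde{t}]}(t)$ and passing to the limit --- this is the standard time-slab argument already used in the one-dimensional case, legitimate thanks to the continuity of $u$ and the $L^\infty$ bound on $g$ --- yields
\begin{align*}
	\int \chi(\tilde{t},x,v)\,\phi(x,v)\,\dx x\,\dx v \;-\; \int \chi(\tilde{t}-\tilde{a},x,v)\,\phi\big(x + \tilde{a} f'(v),v\big)\,\dx x\,\dx v \;=\; \int_{\tilde{t}-\tilde{a}}^{\tilde{t}}\!\int \varphi\,\dx \mu_{0}.
\end{align*}
For each fixed $v$, the change of variables $y = x + \tilde{a} f'(v)$ recognises $\chi(\tilde{t}-\tilde{a},x,v)\,\phi(x+\tilde{a} f'(v),v)$ as $\chi_{\mathsf{FT}(\hyp u(\tilde{t}-\tilde{a});\tilde{a})}(y,v)\,\phi(y,v)$ directly from the definition~\eqref{e:FT} of $\mathsf{FT}$, so the left-hand side of~\eqref{eq:test-function-estimate} coincides with $\int_{\tilde{t}-\tilde{a}}^{\tilde{t}}\int \varphi\,\dx\mu_{0}$.

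To bound this source integral, I use the explicit form $\mu_{0} = g\,\mathcal{L}^{d+1}\otimes\delta_{v=u(t,x)}$ together with $0 \leq \varphi \leq 1$, so $\bigl|\int \varphi\,\dx\mu_{0}\bigr| \leq \norm{g}_{L^\infty}\,\mathcal{L}^{d+1}\bigl(\pi_{(t,x)}\{\varphi>0\}\bigr)$. At each fixed $t$ in the slab and each admissible $v$, the spatial support of $\varphi(t,\cdot,v)$ is the cube $Q_{r+r'/2}(\tilde{x}+(t-\tilde{t})f'(v))$, and $v$ varies over an interval of length $\omega+\omega'$ (from the definition of $\psi_{\omega/2,\omega'}$). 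Since $f'$ is $\norm{f''}_{L^\infty}$-Lipschitz, the shifted centers $\tilde{x}+(t-\tilde{t})f'(v)$ range over a cube of radius at most $|t-\tilde{t}|\norm{f''}_{L^\infty}(\omega+\omega')/2$, so the full $x$-projection at time $t$ lies in a cube of radius $r + r'/2 + |\tilde{a}|\norm{f''}_{L^\infty}(\omega/2+\omega'/2)$, and in particular inside $Q_{r+r'+|\tilde{a}|(\omega/2+\omega')\norm{f''}_{L^\infty}}$. Integrating this cube measure over $t\in[\tilde{t}-\tilde{a},\tilde{t}]$ of length $\tilde{a}$ delivers the announced bound~\eqref{eq:test-function-estimate}. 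The only slightly subtle step is the time-slab approximation used to justify testing the distributional kinetic equation against an indicator in $t$; this is a standard feature of kinetic solutions with $L^\infty$ source, and the remainder is bookkeeping of the supports of $\varphi$.
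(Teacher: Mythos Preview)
Your proof is correct and follows essentially the same route as the paper: you build the time-extended test function $\varphi(t,x,v)=\phi(x-(t-\tilde t)f'(v),v)$ (the paper calls it $\tilde\phi$), test the kinetic equation on the time slab so that the transport term drops out, identify the trace at $t=\tilde t-\tilde a$ with the freely transported hypograph via the change of variables $y=x+\tilde af'(v)$, and then bound the source term by $\norm{g}_{L^\infty}$ times the measure of the $(t,x)$-projection of $\{\varphi>0\}$, using the Lipschitz bound on $f'$ together with the width $\omega+\omega'$ of the $v$-support to confine $x$ to the cube of radius $r+r'+|\tilde a|(\tfrac\omega2+\omega')\norm{f''}_{L^\infty}$. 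The only cosmetic difference is that the paper introduces the auxiliary $\chi_0$ (the freely transported initial hypograph) and works with $\tilde\chi=\chi_1-\chi_0$ so that the function $p(t)=\int\tilde\phi\,\tilde\chi$ vanishes at $t=\tilde t-\tilde a$ by construction, whereas you compute that trace directly; the two arguments are equivalent.
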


\textit{Proof of Lemma~\ref{lem:test-function-estimate}.}
The proof follows closely the proof of~\cite[Proposition~5.6]{Mar-str-wsol}, though we specialise the final estimate in a way that is specific to our problem, resulting in a better overall estimate.
We outline the main details below.

\textbf{1.}
Write 
\begin{align}
	\chi_0(t,x,v) \defeq \chi_{\mathsf{FT}(\hyp u(\tilde{t} - \tilde{a}); t - \tilde{t} + \tilde{a})}(x,v),
	\qquad
	\chi_1(t,x,v) \defeq \chi_{\hyp u(t)}(x,v) .
\end{align}
Then a straightforward modification of~\cite[Lemma~3.1]{Mar-str-wsol} tells us that
\begin{align}
	\pd_t \chi_0 + f'(v) \cdot \grad_x \chi_0 = 0
	\qquad \text{in} \ \ \mathcal{D}'(\R\times\R^d\times\R),
\end{align}
whilst from the kinetic formulation~\eqref{eq:kinetic}
we have
\begin{align}
	\pd_t \chi_1 + f'(v) \cdot \grad_x \chi_1 = \mu_0
	\qquad \text{in} \ \ \mathcal{D}'(\R\times\R^d\times\R).
\end{align}
Set \(\tilde{\chi} \defeq \chi_1 - \chi_0\), and \(\tilde{\phi}(t,x,v) \defeq \phi (x - (t-\tilde{t}) f'(v), v)\), then direct computation shows that
\begin{align} \label{eq:chi-tilde-equation}
	\pd_t (\tilde{\phi}\tilde{\chi}) + f'(v) \cdot \grad_x (\tilde{\phi}\tilde{\chi}) = \tilde{\phi} \mu_0
	\qquad \text{in} \ \ \mathcal{D}'(\R\times\R^d\times\R).
\end{align}
For any \(t \in \R\), set
\begin{align}
	p(t) \defeq \int_{\R^d \times [0,\infty)} \tilde{\phi}(t,x,v) \tilde{\chi}(t,x,v) \, \dx x \dx v ,
\end{align}
so that
\begin{align}
	p(\tilde{t} - \tilde{a}) = 0 ,
	\qquad 
	p(\tilde{t}) = \int_{\R^d \times [0,\infty)} \phi(x,v) \big( \chi_{\hyp u(\tilde{t})} - \chi_{\mathsf{FT}(\hyp u(\tilde{t}-\tilde{a});\tilde{a})} \big) \,\dx x \dx v .
\end{align}

Using~\eqref{eq:chi-tilde-equation}, for any \(\varphi \in C^\infty_c( \R )\) it can be checked directly that
\begin{align}
	\langle p', \varphi \rangle = \int_{\R^d \times [0,\infty)} \tilde{\phi}(t,x,v) \varphi(t) \, \dx \mu_0 ,
\end{align}
where \(p'\) is the distributional derivative of \(p\) in \(\mathcal{D}'(\R)\).
Being also that \(t \mapsto p(t)\) is continuous, from the fundamental theorem of calculus for distributions we have
\begin{align}
	p(\tilde{t}) - p(\tilde{t}-\tilde{a}) = \int_{I \times \R^d \times [0,\infty)} \tilde{\phi}(t,x,v) \, \dx \mu_0 ,
\end{align}
where \(I \subset \R\) is the time interval with endpoints \(\tilde{t} - \tilde{a}\) and \(\tilde{t}\).

\textbf{2.}
Now we specialise our estimate to the problem at hand.
Using the form of \(\mu_0\) and \(\phi\) given in~\eqref{eq:kinetic} and~\eqref{eq:test-phi}, we have
\begin{align} \label{eq:test-function-indicator-estimate}
\begin{aligned}
\int_{I \times \R^d \times [0,\infty)} \tilde{\phi}(t,x,v) \, \dx \mu_0 
=
\int_{I \times \R^d} g(t,x) \phi(x - (t-\tilde{t}) f'(u(t,x)), u(t,x)) \, \dx t \dx x \\
\leq 
\norm{g}_{L^\infty} \int_{I \times \R^d } \bm{1}_{x - (t-\tilde{t}) f'(u(t,x)) \in Q_{r+r'}(\tilde{x})} \, \bm{1}_{ \tilde{v} - \omega' \leq u(t,x) \leq \tilde{v} + \omega + \omega'} \, \dx t \dx x .
\end{aligned}
\end{align}
At a given time \(t \in I\), using the interval bound on \(u(t,x)\) given by the second indicator function, in order for the product of the two indicator functions to be non-zero, it is necessary that 
\begin{align}
\begin{aligned}
	r + r' &\geq \abs{x - \tilde{x} - (t-\tilde{t}) f'(u(t,x))}_\infty \\
	&\geq \abs{x - \tilde{x} - (t-\tilde{t}) f'(\tilde{v}+\frac{\omega}{2})}_\infty - 
	\abs{t - \tilde{t}} \abs{f'(u(t,x)) - f'(\tilde{v}+\frac{\omega}{2})}_\infty \\
	&\geq 
	\abs{x - \tilde{x} - (t-\tilde{t}) f'(\tilde{v}+\frac{\omega}{2})}_\infty - 
	\abs{\tilde{a}} (\frac{\omega}{2} + \omega') \norm{f''}_{L^\infty} ,
\end{aligned}
\end{align}
that is to say that \(x \in Q_{r + r' + \abs{\tilde{a}} (\frac{\omega}{2} + \omega') \norm{f''}_{L^\infty}}(\tilde{x} + (t - \tilde{t})f'(\tilde{v} + \frac{\omega}{2}))\).
The given estimate in the lemma follows since \(\abs{I} = \abs{\tilde{a}}\).
\qed



By choosing the constants \(r' \leq r\) and \(\omega' \leq \omega\) sufficiently small in Lemma~\ref{lem:test-function-estimate} so that the test function \(\phi\) approximates closely the characteristic function of the cuboid \(\mathcal{Q}\), we can combine the estimates in Lemmas~\ref{lem:estimate-cylinder}--\ref{lem:test-function-estimate} to deduce the algebraic decay rate of \(r \mapsto h_r(t,x)\), uniformly in \((t,x) \in  \R \times \R^d\).
\begin{lemma} \label{lem:md-h-estimate}
	Let \(u \in C(\R\times\R^d ; [0,1])\) be a continuous and isentropic solution of the scalar balance law~\eqref{eq:cl} with bounded measurable source \(g \in L^\infty(\R\times \R^d)\), and flux \(f \in C^2([0,1];\R^d)\) satisfying the standard nonlinearity Assumption~\ref{ass:nl1}.
	Then, there exists a constant \(C_0\), depending only on \((d,\norm{f''}_{L^\infty},\norm{g}_{L^\infty},\widetilde{C},C_\omega)\), such that 
	\begin{align}\label{e:accessorio}
		h_r(t,x) \leq C_0 r^{\gamma_0},
		\qquad
		\gamma_0 \defeq \frac{1}{2(1+ \frac{d}{\alpha})}
	\end{align}
	for every \((t,x) \in \R \times \R^d\).
\end{lemma}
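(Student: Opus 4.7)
\textit{Proof proposal for Lemma~\ref{lem:md-h-estimate}.}
The plan is to combine the lower bound of Lemma~\ref{lem:estimate-cylinder} with the upper bound of Lemma~\ref{lem:test-function-estimate} and then optimize in $\omega$. Fix $(t,x) \in \R\times\R^d$, write $h\defeq h_r(t,x)$, and let $(\tilde t,\tilde x,\tilde v,\tilde a)$ be the data produced by Lemma~\ref{lem:estimate-cylinder}. For any $\omega \in (0, C_\omega h^{1+d/\alpha}]$ the lemma gives the lower bound
\begin{equation*}
	\mathcal{L}^{d+1}\bigl(\mathcal{Q}\cap\hyp u(\tilde t)\bigr) - \mathcal{L}^{d+1}\bigl(\mathcal{Q}\cap \mathsf{FT}(\hyp u(\tilde t-\tilde a);\tilde a)\bigr) \geq \frac{\omega h \abs{Q_r}}{2(d+1)},
\end{equation*}
on the cuboid $\mathcal{Q} = Q_r(\tilde x)\times[\tilde v,\tilde v+\omega]$.

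Next I would apply Lemma~\ref{lem:test-function-estimate} with the same $(\tilde t,\tilde x,\tilde v,\tilde a,\omega)$ and test function $\phi$ as in~\eqref{eq:test-phi}, sending the regularization parameters $r'\to 0$ and $\omega'\to 0$. Since $\phi$ converges monotonically to the indicator of $\mathcal{Q}$ and $\hyp u(\tilde t)$, $\mathsf{FT}(\hyp u(\tilde t-\tilde a);\tilde a)$ are measurable, the left-hand side of~\eqref{eq:test-function-estimate} converges to the exact difference of measures on $\mathcal{Q}$, while the right-hand side becomes
\begin{equation*}
\abs{\tilde a}\,\norm{g}_{L^\infty}\,\abs{Q_{r + \abs{\tilde a}\frac{\omega}{2}\norm{f''}_{L^\infty}}}.
\end{equation*}

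Now I would chain the two inequalities and use the bound $\abs{\tilde a} \leq \widetilde{C}\,\frac{2 r\sqrt d}{h^{d/\alpha}}$ from~\eqref{eq:estimate-of-tildes}. The key observation is that the admissibility constraint $\omega \leq C_\omega h^{1+d/\alpha}$ already controls the cube enlargement: combining it with the explicit form of $C_\omega$ given at the end of the proof of Lemma~\ref{lem:estimate-cylinder} (in particular the factor $\tfrac{1}{\widetilde{C}\sqrt d\norm{f''}_{L^\infty}}$), one checks that $\abs{\tilde a}\tfrac{\omega}{2}\norm{f''}_{L^\infty} \leq r$, so that $\abs{Q_{r+\abs{\tilde a}\omega\norm{f''}_{L^\infty}/2}} \leq 2^d \abs{Q_r}$. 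After cancelling $\abs{Q_r}$ one arrives at
\begin{equation*}
\frac{\omega h}{2(d+1)} \leq 2^d\,\widetilde{C}\,\frac{2 r\sqrt d}{h^{d/\alpha}}\,\norm{g}_{L^\infty}.
\end{equation*}

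Finally I would maximize by choosing $\omega = C_\omega h^{1+d/\alpha}$, producing
\begin{equation*}
h^{2 + 2d/\alpha} \leq \frac{2^{d+2}(d+1)\widetilde{C}\sqrt d\,\norm{g}_{L^\infty}}{C_\omega}\; r,
\end{equation*}
which rearranges to $h \leq C_0 r^{\gamma_0}$ with $\gamma_0 = \frac{1}{2(1+d/\alpha)}$ and $C_0$ depending only on the claimed quantities. The main delicate point is verifying that $\omega = C_\omega h^{1+d/\alpha}$ really is admissible simultaneously for Lemma~\ref{lem:estimate-cylinder} (the constraint~\eqref{e:constraintwomega}) and for the enlargement estimate of the cube in Lemma~\ref{lem:test-function-estimate}; this ultimately hinges on $h\in[0,1]$ so that $h^{1+d/\alpha}\leq h^{d/\alpha}$, which lets the single constraint $\omega\leq C_\omega h^{1+d/\alpha}$ absorb both requirements.
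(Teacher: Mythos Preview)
Your proof is correct and follows the same strategy as the paper: combine the lower bound from Lemma~\ref{lem:estimate-cylinder} with the upper bound from Lemma~\ref{lem:test-function-estimate}, then choose $\omega = C_\omega h^{1+d/\alpha}$. The only difference is that you send $r',\omega'\to 0$ (so the test function converges to $\mathbf{1}_{\mathcal{Q}}$ and the error term disappears), whereas the paper keeps explicit finite choices $r'=\tfrac{hr}{3\cdot 2^{d+4}(d+1)}$, $\omega'=\tfrac{\omega h}{32(d+1)}$ and bounds $\mathcal{L}^{d+1}(\supp\phi\setminus\mathcal{Q})$ by a quarter of the lower bound; your limiting variant is slightly cleaner (though ``monotonically'' should be ``by dominated convergence'', since the family $\psi_{a,b}$ need not be monotone in $b$).
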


\textit{Proof of Lemma~\ref{lem:md-h-estimate}.} 
The proof follows closely~\cite[Proposition~4.4]{Mar-str-wsol}, though we present the main steps below.

\textbf{1.}
Fix \(r > 0\), \((t,x) \in \R\times\R^d\), \(h \defeq h_r(t,x)\), \(\omega = C_\omega h^{1+\frac{d}{\alpha}}\), and let \((\tilde{t},\tilde{x}) \in \R \times \R^d\), \(\tilde{v} \in [0,1)\), \(\tilde{a} \in \R\), and \(\mathcal{Q}\) be given as in Lemma~\ref{lem:estimate-cylinder}.
For \(r' \leq r\), and \(\omega' \leq \omega\) to be determined later, let the test function \(\phi\) be given as in Lemma~\ref{lem:test-function-estimate}.

Using Lemma~\ref{lem:estimate-cylinder}, we have the estimate
\begin{align}
\begin{aligned}
&\int_{\R^d \times [0,\infty)} \phi(x,v) \big( \chi_{\hyp u(\tilde{t})} - \chi_{\mathsf{FT}(\hyp u(\tilde{t}-\tilde{a});\tilde{a})} \big) \,\dx x \dx v \\
&\quad =
\int_{\mathcal{Q}} \phi(x,v) \big( \chi_{\hyp u(\tilde{t})} - \chi_{\mathsf{FT}(\hyp u(\tilde{t}-\tilde{a});\tilde{a})} \big) \,\dx x \dx v 
+
\int_{\supp \phi \setminus \mathcal{Q}} \phi(x,v) \big( \chi_{\hyp u(\tilde{t})} - \chi_{\mathsf{FT}(\hyp u(\tilde{t}-\tilde{a});\tilde{a})} \big) \,\dx x \dx v \\
&\quad =
\mathcal{L}^{d+1}\big(\mathcal{Q} \cap \hyp u(\tilde{t}) \big) - 
		\mathcal{L}^{d+1}\big(\mathcal{Q} \cap \mathsf{FT}(\hyp u(\tilde{t}-\tilde{a}) ; \tilde{a})\big)
+
\int_{\supp \phi \setminus \mathcal{Q}} \phi(x,v) \big( \chi_{\hyp u(\tilde{t})} - \chi_{\mathsf{FT}(\hyp u(\tilde{t}-\tilde{a});\tilde{a})} \big) \,\dx x \dx v \\
& \quad \geq 
\frac{\omega h \abs{Q_r}}{2(d+1)} - \mathcal{L}^{d+1}\big( \supp \phi \setminus \mathcal{Q} \big) .
\end{aligned}
\end{align}
Clearly, we have
\begin{align}
	\mathcal{L}^{d+1}\big( \supp \phi \setminus \mathcal{Q} \big) \leq 2^d (r + r')^d (\omega + 2\omega') - 2^d r^d \omega
	\leq 2^d (2 r^d \omega' + 3 \cdot 2^d r^{d-1} r' \omega ) .
\end{align}
Therefore, so long as \(2^d (2 r^d \omega' + 3 \cdot 2^d r^{d-1} r' \omega ) \leq \frac{1}{4} \frac{\omega h \abs{Q_r}}{2(d+1)}\), we have
\begin{align}
	\int_{\R^d \times [0,\infty)} \phi(x,v) \big( \chi_{\hyp u(\tilde{t})} - \chi_{\mathsf{FT}(\hyp u(\tilde{t}-\tilde{a});\tilde{a})} \big) \,\dx x \dx v
	\geq \frac{\omega h \abs{Q_r}}{4(d+1)} .
\end{align}
It is sufficient to take \(r' \defeq \frac{h r}{3 \cdot 2^{d+4} (d+1)} \leq r\), and 
\(\omega' \defeq \frac{\omega h}{32(d+1)} \leq \omega\).

\textbf{2.}
Combining the above estimate with Lemma~\ref{lem:test-function-estimate}, we have
\begin{align}\label{e:altrettanto}
	\frac{\omega h \abs{Q_r}}{4 (d+1)} 
	\leq 
	\abs{\tilde{a}} \norm{g}_{L^\infty} \abs{Q_{r + r' + \abs{\tilde{a}}(\frac{\omega}{2}+\omega')\norm{f''}_{L^\infty}}} .
\end{align}
Estimating \(\abs{\tilde{a}}\) via~\eqref{eq:estimate-of-tildes}, and using the choice of \(\omega = C_\omega h^{1+\frac{d}{\alpha}}\), we have
\begin{align}
\frac{C_\omega h^{2+\frac{d}{\alpha}} r^d}{4 (d+1)} 
	\leq 
\widetilde{C} \frac{2 r \sqrt{d}}{h^{d/\alpha}} \norm{g}_{L^\infty} (2r + 2 \widetilde{C} 2 r \sqrt{d} C_\omega h \norm{f''}_{L^\infty})^d .
\end{align}
Finally, after rearranging and noting that \(h \in [0,\frac12]\), we obtain
\begin{align}
h \leq 
C_0 r^{\gamma_0},
\quad \ \,
C_0 \defeq \big[ \frac{\widetilde{C}}{C_\omega} 2^{d+3} (d+1)  \sqrt{d} \, \norm{g}_{L^\infty} (1 + \widetilde{C} C_\omega \sqrt{d} \, \norm{f''}_{L^\infty})^d \big]^{\gamma_0},
\quad \ \,
\gamma_0 \defeq \frac{1}{2(1+\frac{d}{\alpha})} .
\end{align}

%

\qed


Next we show that an algebraic decay rate of \(r \mapsto h_r(t,x)\), uniformly in \(x \in \R^d\), is sufficient to deduce the H{\"o}lder regularity in space of \(u(t,\cdot)\).

\begin{lemma} \label{lem:md-h-to-holder}
Suppose \(u \in C(\R\times\R^d;[0,1])\) satisfies the estimate
\begin{align}\label{e:assumption}
	h_r(t,x) \leq C r^\gamma 
\end{align}
for some \(\gamma \in (0,1)\), and \(C > 0\), uniformly in \(x \in \R^d\).
Then \(u(t,\cdot)\) is \(\gamma\)-H{\"o}lder with constant at most \(2 C \big( 1 + \frac{2}{2^\gamma -1} \big)\), uniformly in \(x \in \R^d\).
{
In particular, let $\gamma_0= \frac1{2(1+\frac{d}\alpha)}$ be as in the previous lemma.
Then there is $C_{\gamma_0}>0$ depending only on $\gamma_0$ such that, if $u$ satisfies \eqref{e:assumption} for some $\gamma \in [\gamma_0,1)$,  then $u(t,\cdot)$ is $\gamma$-H{\"o}lder with constant at most $C_{\gamma_0}C$.}
\end{lemma}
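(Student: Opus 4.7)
Proof proposal. The plan is to mirror the one-dimensional argument of Lemma~\ref{lem:holder}, replacing balls by cubes and adapting the dyadic decomposition to $\R^d$. Fix $t\in\R$ and $x,y\in\R^d$ and set $r\defeq|y-x|_\infty$. Since both $x$ and $y$ belong to $\overline{Q_r(x)}$, Definition~\ref{def:h-cube} immediately yields
\[
\big|(u(t,\cdot))_{Q_r(x)}-(u(t,\cdot))_{Q_r(y)}\big|\le 2\,h_r(t,x)\le 2Cr^\gamma,
\]
so by the triangle inequality the proof reduces to estimating $|u(t,z)-(u(t,\cdot))_{Q_r(z)}|$ for $z\in\{x,y\}$.

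For this step I would set up a dyadic telescoping sum. The key geometric observation is that $Q_r(z)$ is, up to a null set, the disjoint union of the $2^d$ sub-cubes $Q_{r/2}(z_\sigma)$ with $z_\sigma\defeq z+\tfrac{r}{2}\sigma$ and $\sigma\in\{-1,+1\}^d$, and that each corner center $z_\sigma$ lies on $\partial Q_{r/2}(z)\subset\overline{Q_{r/2}(z)}$. Averaging and applying the definition of $h_{r/2}(t,z)$ to each pair $(z,z_\sigma)\in \overline{Q_{r/2}(z)}^{\,2}$ then gives
\[
\big|(u(t,\cdot))_{Q_r(z)}-(u(t,\cdot))_{Q_{r/2}(z)}\big|
=\Big|\tfrac{1}{2^d}\textstyle\sum_{\sigma}\big((u(t,\cdot))_{Q_{r/2}(z_\sigma)}-(u(t,\cdot))_{Q_{r/2}(z)}\big)\Big|
\le 2\,h_{r/2}(t,z).
\]
Iterating at the scales $r/2^k$, using continuity of $u$ to identify $\lim_{k\to\infty}(u(t,\cdot))_{Q_{r/2^k}(z)}=u(t,z)$, and bounding each term via \eqref{e:assumption}, the resulting geometric series sums to
\[
\big|u(t,z)-(u(t,\cdot))_{Q_r(z)}\big|\le \sum_{k=0}^\infty 2\,h_{r/2^{k+1}}(t,z)\le \frac{2Cr^\gamma}{2^\gamma-1}.
\]
Combining the three contributions yields $|u(t,x)-u(t,y)|\le 2C\bigl(1+\tfrac{2}{2^\gamma-1}\bigr)r^\gamma$, and since $r=|y-x|_\infty\le|y-x|$ the same bound transfers to the Euclidean distance.

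For the final ``in particular'' clause, I would observe that the function $\gamma\mapsto 1+\frac{2}{2^\gamma-1}$ is strictly decreasing on $(0,1)$, so setting $C_{\gamma_0}\defeq 2\bigl(1+\tfrac{2}{2^{\gamma_0}-1}\bigr)$ produces a single constant, depending only on $\gamma_0$, that dominates $2\bigl(1+\tfrac{2}{2^\gamma-1}\bigr)$ for every $\gamma\in[\gamma_0,1)$. I do not foresee any genuine obstacle: the only delicate point is the dyadic sub-cube decomposition and the verification that each corner $z_\sigma$ lies in $\overline{Q_{r/2}(z)}$, which is precisely the point where the cubic geometry of Definition~\ref{def:h-cube} (in contrast to the spherical geometry of Lemma~\ref{lem:holder}) becomes relevant.
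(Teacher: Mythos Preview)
Your proposal is correct and follows essentially the same approach as the paper's own proof: triangle inequality, dyadic sub-cube decomposition of $Q_r(z)$ into $2^d$ half-sized cubes centered at the corners of $Q_{r/2}(z)$, telescoping, and summing the geometric series. The only cosmetic difference is that the paper sets $r\defeq|x-y|$ (Euclidean) directly, whereas you take $r\defeq|y-x|_\infty$ and transfer to the Euclidean norm at the end; the paper also leaves the ``in particular'' clause implicit, while your monotonicity argument for $\gamma\mapsto 1+\tfrac{2}{2^\gamma-1}$ makes it explicit.
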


\textit{Proof of Lemma~\ref{lem:md-h-to-holder}.}
Since we work at a fixed time \(t \in \R\) throughout, let us suppress all time dependence in this proof.
For any \(x, y \in \R^d\), let \(r \defeq \abs{x-y}\).
We have
\begin{align}
\begin{aligned}
	\abs{u(x) - u(y)} 
	\leq 
	\abs{u(x) - (u)_{Q_r(x)}} + \abs{(u)_{Q_r(x)} - (u)_{Q_r(y)}} + \abs{u(y) - (u)_{Q_r(y)}} \\
	\leq
	\abs{u(x) - (u)_{Q_r(x)}} + \abs{u(y) - (u)_{Q_r(y)}} + 2 h_r(x) .
\end{aligned}
\end{align}
Now, since a single cube is tessellated by \(2^d\) sub-cubes each of half the side length, it is clear that 
\begin{align}
	(u)_{Q_r(x)} = \frac{1}{2^d} \sum_{i = 1}^{2^d}  (u)_{Q_{\frac{r}{2}}(x_i)} ,
\end{align}
where the points \(\{x_i \in \cl{Q_{\frac{r}{2}}(x)} : 1 \leq i \leq 2^d \}\) are the corners of the cube \(Q_{\frac{r}{2}}(x)\).
Therefore we can estimate
\begin{align}
\begin{aligned}
	\abs{(u)_{Q_r(x)} - (u)_{Q_{\frac{r}{2}(x)}}} 
	= \tabs{ \frac{1}{2^d} \sum_{i = 1}^{2^d} (u)_{Q_{\frac{r}{2}}(x_i)} - (u)_{Q_{\frac{r}{2}(x)}}}
	\leq \frac{1}{2^d} \sum_{i=1}^{2^d} \tabs{ (u)_{Q_{\frac{r}{2}}(x_i)} - (u)_{Q_{\frac{r}{2}}(x)} } \\
	\leq 2 h_{\frac{r}{2}}(x) .
\end{aligned}
\end{align}
Since \(u\) is continuous, we have
\begin{align}
\begin{aligned}
    \abs{(u)_{Q_r(x)} - u(x)} = \tabs{\sum_{k=0}^\infty \Big( (u)_{Q_{\frac{r}{2^k}}(x)} - (u)_{Q_{\frac{r}{2^{k+1}}}(x)} \Big) }
    \leq \sum_{k = 0}^\infty 2 h_{\frac{r}{2^{k+1}}}(x) 
    \leq 2 C \sum_{k=0}^\infty \big( \frac{r}{2^{k+1}} \big)^\gamma
    \\
    = \frac{2 C r^\gamma}{2^\gamma - 1 } .
\end{aligned}
\end{align}
Therefore 
\begin{align}
    \abs{u(x) - u(y)}  \leq 2 h_r (x) + \frac{4 C r^\gamma}{2^\gamma -1}
    \leq 2 C \big( 1 + \frac{2}{2^\gamma -1} \big) \abs{x-y}^\gamma .
\end{align}
\qed

Finally, combining Lemmas~\ref{lem:md-h-estimate}--\ref{lem:md-h-to-holder}, we can bootstrap our estimate of \(h_r\) using the additional H{\"o}lder regularity to improve the estimate.

{
\begin{lemma} \label{lem:md-bootstrap}
	Let \(u \in C(\R\times\R^d ; [0,1])\) be as in Lemma~\ref{lem:md-h-estimate}, and suppose there exist constants \(C_n >0\), and \(\gamma_n \in [\gamma_0,1)\) such that
	\begin{align}\label{e:ass-h}
		h_r(t,x) \leq C_n r^{\gamma_n}
	\end{align}  
	uniformly in \((t,x) \in \R\times\R^d\).
	Then there is a constant $C>0$ depending on $d,\norm{f''}_{L^\infty}, \norm{g}_{L^\infty},\widetilde{C}, C_\omega$ such that 
	\begin{align}\label{e:espIter}
		h_r(t,x) \leq C_{n+1} r^{ \gamma_{n+1}},
		\qquad \gamma_{n+1} \defeq \frac{1+2\gamma_n}{2(1 + \frac{d}{\alpha})}, \qquad C_{n+1}\le CC_n^{\frac1{1+\frac{d}\alpha}},
	\end{align}
    for some constant \(C > 0\), uniformly in \((t,x) \in \R\times\R^d\).
\end{lemma}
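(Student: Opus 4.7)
The plan is to rerun the proof of Lemma~\ref{lem:md-h-estimate}, but at two points use the assumed $\gamma_n$-Hölder bound (via Lemma~\ref{lem:md-h-to-holder}) to gain a factor $(C_n r^{\gamma_n})^{-1}$ in the lower side of the main estimate~\eqref{e:altrettanto}. The net effect is to replace $h^{2(1+d/\alpha)}\lesssim r$ by $h^{2(1+d/\alpha)}\lesssim C_n^{\,2}\, r^{1+2\gamma_n}$, which is exactly~\eqref{e:espIter}.

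First, Lemma~\ref{lem:md-h-to-holder} and the hypothesis~\eqref{e:ass-h} give $\gamma_n$-Hölder continuity of $u(t,\cdot)$ with constant $\lesssim C_n$. Setting $\underline u\defeq\min_{y\in\overline{Q_{2r}(x)}}u(t,y)$ and $\overline u\defeq\max_{y\in\overline{Q_{2r}(x)}}u(t,y)$, we have $\overline u-\underline u\lesssim C_n r^{\gamma_n}$. Restricting the integration in the opening identity~\eqref{e:1} of Lemma~\ref{lem:estimate-cylinder} to $v\in[\underline u,\overline u]$, exactly as in the proof of the $1$-D Lemma~\ref{lem:improved-holder}, strengthens~\eqref{eq:mean-value-estimate2} to
\[
m(t,y_1,\overline v+h)-m(t,y_2,\overline v) \;\ge\; \frac{h\abs{Q_r}}{\overline u-\underline u-h} \;\gtrsim\; \frac{h\abs{Q_r}}{C_n r^{\gamma_n}}.
\]
Steps 2--3 of the proof of Lemma~\ref{lem:estimate-cylinder} carry through unchanged, producing a quadruple $(\tilde t,\tilde x,\tilde v,\tilde a)$ satisfying~\eqref{eq:estimate-of-tildes} together with the refined lower bound $\mathcal{L}^{d+1}(\mathcal Q\cap\hyp u(\tilde t))-\mathcal{L}^{d+1}(\mathcal Q\cap\mathsf{FT}(\hyp u(\tilde t-\tilde a);\tilde a))\gtrsim \omega h\abs{Q_r}/(C_n r^{\gamma_n})$.

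The second Hölder gain enters through the admissible range of $\omega$: the principal positive term in~\eqref{eq:m-prime-estimate} is now larger by $(C_n r^{\gamma_n})^{-1}$, while the error term~\eqref{e:altro} is unchanged, so the binding constraint in~\eqref{e:constraintwomega} relaxes to $\omega\le c\,h^{1+d/\alpha}/(C_n r^{\gamma_n})$. Choosing $\omega$ at this enlarged maximum and combining with the test-function upper bound of Lemma~\ref{lem:test-function-estimate} as in Step~2 of Lemma~\ref{lem:md-h-estimate}, using $\abs{\tilde a}\lesssim r/h^{d/\alpha}$ and absorbing $(2r+\abs{\tilde a}(\omega/2+\omega')\norm{f''}_{L^\infty})^d\lesssim r^d$ via $h\le C_nr^{\gamma_n}$, the analogue of~\eqref{e:altrettanto} becomes
\[
\frac{h^{2+d/\alpha}\,r^d}{C_n^{\,2}\,r^{2\gamma_n}} \;\lesssim\; \norm{g}_{L^\infty}\,\frac{r^{d+1}}{h^{d/\alpha}},
\]
and rearranging yields~\eqref{e:espIter} with $C_{n+1}\le C\,C_n^{1/(1+d/\alpha)}$.

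The main technical obstacle is purely book-keeping: verifying that the enlarged $\omega_n\defeq c\,h^{1+d/\alpha}/(C_n r^{\gamma_n})$ is compatible with the two remaining constraints in~\eqref{e:constraintwomega}, namely $\omega\le h^{d/\alpha}/(\widetilde C\sqrt d\norm{f''}_{L^\infty})$ and $\omega\le h/(2(d+1))$, and that the support perturbation in Lemma~\ref{lem:test-function-estimate} remains $O(r)$. Both reduce to controlling $(C_n r^{\gamma_n})^{d/\alpha-1}$, and using $d/\alpha\ge 1$ (forced by Assumption~\ref{ass:nl1} via Lemma~\ref{L:Silv}) together with $h\le C_n r^{\gamma_n}$, they are satisfied in the regime where~\eqref{e:espIter} is non-trivial; in the complementary regime the bound follows from $h\le\tfrac12$.
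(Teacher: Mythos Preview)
Your proposal is correct and follows essentially the same approach as the paper's proof: both use the assumed $\gamma_n$-H\"older bound (via Lemma~\ref{lem:md-h-to-holder}) first to shrink the $v$-integration range in~\eqref{e:1} and sharpen~\eqref{eq:mean-value-estimate2} by a factor $(C_n r^{\gamma_n})^{-1}$, and second to enlarge the admissible $\omega$ in~\eqref{e:constraintwomega} by the same factor, then combine with Lemma~\ref{lem:test-function-estimate} exactly as in Lemma~\ref{lem:md-h-estimate} to reach $h^{2(1+d/\alpha)}\lesssim C_n^2 r^{1+2\gamma_n}$. Your final paragraph verifying the residual constraints on $\omega$ is more explicit than the paper's treatment, but the substance is identical.
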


\textit{Proof of Lemma~\ref{lem:md-bootstrap}.}
The strategy of the proof is the same as the one of Lemma~\ref{lem:improved-holder}.
In particular, we deduce from \eqref{e:ass-h} and Lemma \ref{lem:md-h-to-holder}, that $u(t,\cdot)$ is $\gamma_n$-H{\"o}lder with constant at most $C_{\gamma_0}C_n$. 
We use this information to improve the estimate \eqref{eq:estimate-cylinder}.
In particular, estimate \eqref{e:1} still holds when we replace the integral in $v$ on the interval $[0, 1-h]$ with the one on the interval $[u(t,y_1)-C_{\gamma_0}C_n(\mathrm{diam}\ Q_r)^{\gamma_n}, u(t,y_1)+C_{\gamma_0}C_n(\mathrm{diam}\ Q_r)^{\gamma_n}]$. Then, the application of the mean value theorem gives the existence of a value $\overline v$ {that satisfies, rather than~\eqref{eq:mean-value-estimate2}, the finer estimate} 
\begin{equation}\label{e:improved-levels}
m(t,y_1,\overline v + h)-m(t,y_2,\overline v) \ge \frac{h|Q_r|}{C_n C_{\gamma_0}C_d r^{\gamma_n}}.
\end{equation}
\\
{
Now we replace \eqref{eq:mean-value-estimate2} with the improved estimate above while following of the proof of Lemma~\ref{lem:estimate-cylinder}:
we conclude that \eqref{eq:estimate-cylinder} can be improved to 
\[
\mathcal{L}^{d+1}\big(\mathcal{Q} \cap \hyp u(\tilde{t}) \big) - 
		\mathcal{L}^{d+1}\big(\mathcal{Q} \cap \mathsf{FT}(\hyp u(\tilde{t}-\tilde{a}) ; \tilde{a})\big)
		\geq \frac{\omega h_r(t,x) \abs{Q_r}}{C_nC_dC_{\gamma_0}r^{\gamma_n}},
\]
for an appropriate choice of $\omega$.
Observe that \eqref{e:improved-levels} improves \eqref{eq:m-prime-estimate}, and thanks to \eqref{e:altro}, it allows to choose the larger value
\begin{equation}\label{e:choice-gamma}
\omega = \widetilde{C}_\omega \frac{h_r(t,x)^{1+\frac{d}\alpha}}{C_n r^{\gamma_n}}
\end{equation}
for some $\widetilde{C}_\omega$ depending on $d, \widetilde{C}, \norm{f''}_{L^\infty}, C_{\gamma_0}$ but not on $n$.
 Concatenating this estimate with \eqref{eq:test-function-estimate} as in the proof of Lemma \ref{lem:md-h-estimate} we obtain in place of~\eqref{e:altrettanto}
 \begin{align}
\frac{\omega h_r(t,x) \abs{Q_r}}{C_nC_dC_{\gamma_0}r^{\gamma_n}}
 	\leq 
	\abs{\tilde{a}} \norm{g}_{L^\infty} \abs{Q_{r + r' + \abs{\tilde{a}}(\frac{\omega}{2}+\omega')\norm{f''}_{L^\infty}}} .
\end{align}
 Estimating \(\abs{\tilde{a}}\leq  \widetilde{C} \frac{2 r \sqrt{d}}{h_r(t,x)^{d/\alpha}} \) via~\eqref{eq:estimate-of-tildes}, and choosing $\omega$ as in \eqref{e:choice-gamma}, we have
 \[
 h_r(t,x)^{2(1+\frac{d}\alpha)}\le C C_n^2r^{1+2\gamma_n}
 \]
for some constant $C>0$ depending on $d,\norm{f''}_{L^\infty}, \norm{g}_{L^\infty},\widetilde{C}, C_\omega$,
and this proves the claim with $\gamma_{n+1}=\frac{1+2\gamma_n}{2(1+\frac{d}\alpha)}.$ }
\qed


\vskip\baselineskip

\vskip\baselineskip
The following statement is the main result of this section.



\begin{proposition} \label{cor:md-holder}
	Let \(u \in C(\R\times\R^d ; [0,1])\) be as in Lemma~\ref{lem:md-h-estimate}.
	Then for any \(t\in \R\), \(u(t,\cdot)\) is \(\overline{\gamma}\)-H{\"o}lder continuous in space for \(\overline{\gamma} \defeq \frac{\alpha}{2d}\). 
\end{proposition}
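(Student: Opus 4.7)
The plan is to iterate Lemma~\ref{lem:md-bootstrap} starting from the baseline estimate of Lemma~\ref{lem:md-h-estimate}, show the exponents $\gamma_n$ converge to $\bar\gamma=\frac{\alpha}{2d}$, show the constants $C_n$ stay uniformly bounded, and then apply Lemma~\ref{lem:md-h-to-holder} in the limit.

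\textbf{Step 1 (Iterating the exponents).} By Lemma~\ref{lem:md-h-estimate} we have $h_r(t,x)\le C_0 r^{\gamma_0}$ uniformly with $\gamma_0=\frac{1}{2(1+d/\alpha)}$. Lemma~\ref{lem:md-bootstrap} gives the recursion $\gamma_{n+1}=\frac{1+2\gamma_n}{2(1+d/\alpha)}$, which is an affine contraction of the interval $[\gamma_0,1)$ into itself: its slope $\frac{1}{1+d/\alpha}<1$, and its unique fixed point $\gamma_\ast$ solves $2(1+d/\alpha)\gamma_\ast=1+2\gamma_\ast$, i.e.\ $\gamma_\ast=\frac{\alpha}{2d}=\bar\gamma$. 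Hence $\gamma_n\nearrow\bar\gamma$ monotonically (one checks $\gamma_1>\gamma_0$ since $\gamma_0<\bar\gamma$), and the convergence is geometric with rate $(1+d/\alpha)^{-1}$.

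\textbf{Step 2 (Controlling the constants).} The recursion $C_{n+1}\le C\, C_n^{1/(1+d/\alpha)}$ from Lemma~\ref{lem:md-bootstrap} has exponent $\theta\defeq\frac{1}{1+d/\alpha}\in(0,1)$, so taking logs gives $\log C_{n+1}\le \theta\log C_n+\log C$, an affine contraction with unique fixed point $\log C_\ast=\frac{\log C}{1-\theta}$. Thus $\limsup_n C_n\le C_\ast$, and in particular the sequence $(C_n)$ is uniformly bounded by a constant $C_\infty$ depending only on $d,\alpha,\norm{f''}_{L^\infty},\norm{g}_{L^\infty},\widetilde C,C_\omega$ (and on the seed $C_0$, itself depending on the same quantities).

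\textbf{Step 3 (Passing to the limit).} For each $n$, $h_r(t,x)\le C_n r^{\gamma_n}\le C_\infty\max(1,r^{\gamma_n-\bar\gamma})r^{\bar\gamma}$ uniformly in $(t,x)$. Fixing any bounded scale $r\le R$, since $\gamma_n\le\bar\gamma$ we get $r^{\gamma_n}=r^{\bar\gamma}r^{\gamma_n-\bar\gamma}\le r^{\bar\gamma}\max(1,R^{\gamma_n-\bar\gamma})$, and the multiplicative factor tends to $1$ as $n\to\infty$. For $r>R$ the function $h_r$ is in any case bounded by $\frac12$. Either way, sending $n\to\infty$ yields $h_r(t,x)\le C_\infty' r^{\bar\gamma}$ uniformly on bounded scales, and then Lemma~\ref{lem:md-h-to-holder} (applicable because $\bar\gamma\in[\gamma_0,1)$) converts this into $\bar\gamma$-Hölder continuity of $u(t,\cdot)$ with Hölder constant at most $C_{\gamma_0}C_\infty'$, which is the claim.

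\textbf{Main obstacle.} The only delicate point is making sure the bootstrap really does close at $\bar\gamma=\frac{\alpha}{2d}$ rather than stopping short: this is precisely why Lemma~\ref{lem:md-bootstrap} has been stated with the factor $\theta=\frac{1}{1+d/\alpha}<1$ on the constants, which is exactly what keeps $(C_n)$ bounded while the exponents strictly increase. Once this uniformity of constants is in hand, the limiting argument is a routine interpolation, and no further geometric input from the kinetic formulation is needed at this stage.
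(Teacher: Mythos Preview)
Your approach is essentially identical to the paper's: iterate the bootstrap, note the exponents converge to the fixed point $\bar\gamma=\frac{\alpha}{2d}$ of the affine map $\gamma\mapsto\frac{1+2\gamma}{2(1+d/\alpha)}$, use sublinearity of $y\mapsto Cy^{\theta}$ with $\theta=\frac{1}{1+d/\alpha}<1$ to keep the constants bounded, and pass to the limit via Lemma~\ref{lem:md-h-to-holder}.

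One small slip in Step~3: the inequality $r^{\gamma_n-\bar\gamma}\le\max(1,R^{\gamma_n-\bar\gamma})$ is false for $r<1$, since $\gamma_n-\bar\gamma\le 0$ makes $s\mapsto s^{\gamma_n-\bar\gamma}$ decreasing. The detour through bounded scales is unnecessary; simply observe that for each fixed $r>0$ and $(t,x)$ you have $h_r(t,x)\le C_\infty r^{\gamma_n}$ for every $n$, and since $r^{\gamma_n}\to r^{\bar\gamma}$ you get $h_r(t,x)\le C_\infty r^{\bar\gamma}$ directly (or equivalently, pass to the limit in the H\"older inequality $|u(t,x)-u(t,y)|\le C_{\gamma_0}C_\infty|x-y|^{\gamma_n}$ furnished by Lemma~\ref{lem:md-h-to-holder}). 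With this correction the argument is complete and matches the paper's.
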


\textit{Proof of Proposition~\ref{cor:md-holder}.}
The constant \(\overline{\gamma}\) is given as the fixed point of the map \(\gamma_n \mapsto \gamma_{n+1}\), that is
\begin{align}\label{e:fixedpoint}
	\overline{\gamma} = \frac{1 +  2\overline{\gamma}}{2(1 + \frac{d}{\alpha})} ,
\end{align}
resulting in the stated value \(\overline{\gamma} \defeq \frac{\alpha}{2d}\). 
It remains to check that the H{\"o}lder constants with H{\"older} exponent $\gamma_n$ remain uniformly bounded as $n\to \infty$: since the map $y\mapsto Cy^{\frac1{1+\frac{d}\alpha}}$ is sublinear, then it follows from \eqref{e:espIter} that the sequence $C_n$ is bounded: it follows from Lemma \ref{lem:md-h-to-holder} that the H{\"o}lder constants are uniformly bounded as well.
\qed}



\begin{example}[The Burgers equation in \((1+d)\)-dimensions]
	For the Burgers equation in \((1+d)\)-dimensions, we have \(\alpha = \frac1{d}\), and therefore Proposition~\ref{cor:md-holder} gives us \(\overline{\gamma} = \frac{1}{2d^2}\). 
    This result is not optimal: the analysis in one space dimension, see e.g.~the analysis of \S\ref{S:Burg1d}, suggests that the optimal exponent is \(\overline{\gamma} = \frac{1}{d+1}\). 
    In the next section, we improve the exponent to $\overline{\gamma}=\frac 1 {2d}$.
\end{example}


%

\section{Improved decomposition lemma}

\label{sec:mdcl}

In this section we improve the decomposition Lemma~\ref{lem:decomposition-lemma} to obtain a better order on the estimate of the coefficients in \eqref{eq:estimate-of-coefficients}.
We prove it first in the case of the multi-dimensional Burgers equation, and then for general smooth nondegenerate fluxes $f$, in which cases we improve the order from $\frac{1}{d^2}$ to $\frac 1 d$.
This is interesting in order to get a better regularity estimate for continuous solutions to a balance law with bounded source, namely improving the H\"older continuity exponent from $\frac{1}{2 d^2}$ 
to $\frac 1 {2d}$ as stated in Proposition~\ref{cor:md-holder2} below.

In the direction approximately parallel to the $(1+\ell)$-th derivative $f^{(1+\ell)}$, the estimate further improves: up to the order $\frac 1 \ell$ for the decomposition lemma.

%

We first recall the elementary result in \cite[\S~2.3]{Silvestre} which relates Assumption \ref{ass:nl1} with Assumption \ref{ass:nl2}.

\begin{lemma}\label{L:Silv}
When $f\in C^{d+2}(\R)$, Assumption~\ref{ass:nl2} is equivalent to taking \(\alpha = \frac{1}{d}\) in Assumption~\ref{ass:nl1}.
\end{lemma}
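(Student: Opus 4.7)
\textit{Proof plan for Lemma~\ref{L:Silv}.}
The strategy is to prove the two implications separately; throughout, set $\phi_{\tau,\xi}(v) \defeq \tau + f'(v)\cdot \xi$, and note that $\phi_{\tau,\xi}^{(k)}(v) = f^{(k+1)}(v)\cdot\xi$ for $k\geq 1$.

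\textbf{Assumption~\ref{ass:nl2} implies Assumption~\ref{ass:nl1} with $\alpha=\tfrac 1 d$.} I would first reduce to a case where the derivatives of $\phi_{\tau,\xi}$ have a uniform joint lower bound. Split on the size of $|\xi|$: fix $\eps_0>0$ small (depending on $\norm{f'}_{L^\infty}$). If $|\xi|\leq \eps_0$, then $|\tau|\geq \sqrt{1-\eps_0^2}$, so $|\phi_{\tau,\xi}(v)|\geq |\tau|-\norm{f'}_{L^\infty}|\xi|$ is bounded below by a fixed positive constant, and the sublevel set is empty for $\delta$ small; otherwise the bound is trivial for $\delta$ of order one. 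If $|\xi|\geq \eps_0$, Assumption~\ref{ass:nl2} together with compactness of $[0,1]\times S^{d-1}$ and continuity of $(v,\eta)\mapsto \max_{2\leq k\leq d+1}|f^{(k)}(v)\cdot\eta|$ gives a constant $c_1>0$ so that
\[
\max_{1\leq k\leq d}|\phi_{\tau,\xi}^{(k)}(v)|\;\geq\; c_1|\xi|\;\geq\; c_1\eps_0
\qquad\text{for every }v\in[0,1].
\]
Then I would invoke the standard finite-type sublevel set lemma: if $\phi\in C^d([0,1])$ satisfies $\max_{1\leq k\leq d}|\phi^{(k)}(v)|\geq c$ at every $v$, then $|\{v:|\phi(v)|<\delta\}|\leq C\delta^{1/d}$, with $C$ depending on $c$, $d$, and $\norm{\phi}_{C^d}$. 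The proof of this lemma—the analytic heart of the direction—follows by using continuity and a finite covering of $[0,1]$ to partition it into intervals on each of which a single $\phi^{(k)}$ is bounded below by $c/2$, then applying on each piece the classical one-variable van der Corput-type estimate (proved by induction on $k$, reducing at each step via the monotonicity of $\phi^{(k-1)}$). This is exactly the lemma available in~\cite[\S~2.3]{Silvestre}, which I would cite.

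\textbf{Assumption~\ref{ass:nl1} with $\alpha=\tfrac 1 d$ implies Assumption~\ref{ass:nl2}.} I would argue by contraposition. If Assumption~\ref{ass:nl2} fails at some $v_0\in[0,1]$, then $\{f''(v_0),\dots,f^{(d+1)}(v_0)\}$ does not span $\R^d$, so there exists a unit vector $\xi\in S^{d-1}$ with $f^{(k)}(v_0)\cdot \xi=0$ for every $k=2,\dots,d+1$. Define $\tau\defeq -f'(v_0)\cdot\xi$ and consider $(\tau,\xi)/|(\tau,\xi)|$ in place of $(\tau,\xi)$ to obtain a unit vector in $\R^{d+1}$. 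By construction $\phi_{\tau,\xi}(v_0)=0$ and $\phi_{\tau,\xi}^{(k)}(v_0)=0$ for $k=1,\dots,d$, so Taylor's theorem yields
\[
|\phi_{\tau,\xi}(v)|\;\leq\; M\,|v-v_0|^{d+1}
\qquad\text{near }v_0,
\]
where $M$ depends on $\norm{f^{(d+2)}}_{L^\infty}$. Consequently $\{v\in[0,1]:|\phi_{\tau,\xi}(v)|<\delta\}$ contains an interval of length $\geq 2(\delta/M)^{1/(d+1)}$ for all $\delta$ small. Since $\delta^{1/(d+1)}/\delta^{1/d}\to +\infty$ as $\delta\to 0^+$, this violates the bound $C\delta^{1/d}$, contradicting Assumption~\ref{ass:nl1} with $\alpha=\tfrac 1 d$.

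\textbf{Main obstacle.} The only non-routine ingredient is the sublevel set lemma in the forward direction; the reduction to it is a straightforward compactness-and-continuity argument, and the converse is a direct Taylor expansion. Since this sublevel set estimate is classical and already available in the cited reference, I would simply state the two case arguments above and invoke~\cite[\S~2.3]{Silvestre} for the analytic bound.
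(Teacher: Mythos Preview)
Your proof plan is correct. Note, however, that the paper does not actually prove this lemma: it simply introduces it with the sentence ``We first recall the elementary result in \cite[\S~2.3]{Silvestre}'' and states the lemma without argument. So there is nothing to compare against beyond the shared reference; your plan supplies exactly the standard details behind that citation---the compactness reduction plus van der Corput sublevel estimate for the forward direction, and the Taylor-expansion contrapositive for the converse---and both are carried out correctly.
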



In the following sub-sections we prove the following lemma, that, as stated, improves the decomposition Lemma~\ref{sec:mdcl} from the bound $h^{-1}$ raised to the power $\frac{1}{ d^2}$, to the better power ${\frac 1 d}$, in the worst case, and to the power ${\frac 1 \ell}$ in the direction $a_{\ell}$, for linearly independent directions $a_{1}, \dots, a_{\ell}$.

\begin{lemma} \label{lem:decomposition-lemma2}
Suppose the flux \(f \in C^{d+2}([0,1];\R^d)\) satisfies the nonlinearity Assumption~\ref{ass:nl2}.
	Given \(v > 0\) and \(0< h < 1\), set $v_{i}=v+\frac{ih}{d}$, where $ i=0, \,\dots, d$. Then for every \(a \in \R^d\), there exist \(\lambda=(\lambda_1, \,\cdots, \lambda_{d}) \in \R^d\) with
	\begin{align} \label{eq:estimate-of-coefficientsBur}
		a = \sum_{i=1}^{d} \lambda_i\left[ f'(v_i)-f'(v_0)\right],
		\qquad
		\norm{\lambda} \leq \widetilde{C}_{d} \frac{\norm{a}}{h^{d}} .
	\end{align}	
	Moreover, for each $\ell=1,\dots,d$, there is a direction $a_{\ell} \in \R^d$ satisfying the better estimate \[\norm{\lambda} \leq \widetilde{C}_{d} \frac{\norm{a_{\ell}}}{h^{\ell}} .\]
\end{lemma}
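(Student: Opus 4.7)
The plan is to read the defining identity $a = \sum_{i=1}^{d} \lambda_i\bigl[f'(v_i) - f'(v_0)\bigr]$ as a linear system $W\lambda = a$, where $W\in\M_{d\times d}$ is the ``almost Wronskian'' matrix whose $i$-th column is $f'(v_i) - f'(v_0)$. Taylor-expanding $f'$ at $v_0$ up to order $d$ yields, for every $i = 1, \dots, d$,
\[
f'(v_i) - f'(v_0) \;=\; \sum_{k=1}^{d}\frac{(ih/d)^k}{k!}\, f^{(k+1)}(v_0) \;+\; R_i,
\qquad \norm{R_i} \leq C\,\norm{f^{(d+2)}}_{L^\infty} h^{d+1},
\]
which exhibits the clean factorization $W = F\, H_d(h) + E$, with $F\in\M_{d\times d}$ the matrix whose $k$-th column is $\tfrac{1}{k!}f^{(k+1)}(v_0)$, with $H_d(h)$ the matrix from the notation table, and with error term satisfying $\norm{E} = O(h^{d+1})$.

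Next I would estimate $F^{-1}$ and $H_d(h)^{-1}$ separately. Assumption~\ref{ass:nl2} guarantees that the columns of $F$ form a basis of $\R^d$, so $\norm{F^{-1}}$ is uniformly bounded in $v_0 \in [0,1]$ by continuity and compactness. To handle $H_d(h)^{-1}$, I would use a double factorization: first $H_d(h) = V D$ with $D = \mathrm{diag}(h/d, 2h/d, \dots, h)$ and $V$ the Vandermonde matrix at the nodes $\{ih/d\}_{i=1}^{d}$; and then $V = \tilde D\, U$ with $\tilde D = \mathrm{diag}(1, h/d, \dots, (h/d)^{d-1})$ and $U$ the \emph{fixed} Vandermonde at $1,2,\dots,d$. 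Hence $H_d(h)^{-1} = D^{-1} U^{-1} \tilde D^{-1}$, and combined with $\norm{D^{-1}} \leq d/h$, $\norm{\tilde D^{-1}} \leq (d/h)^{d-1}$, $\norm{U^{-1}} \leq c_d$, this gives the crude bound $\norm{H_d(h)^{-1}} \leq \widetilde c_d/h^d$. Since $\norm{(F H_d(h))^{-1} E} = O(h)$, a Neumann series argument shows $\norm{W^{-1}} \leq \widetilde C_d/h^d$ for small $h$; the regime $h$ bounded away from zero can be absorbed into the constant by continuity of $f$. This proves the first inequality in \eqref{eq:estimate-of-coefficientsBur}.

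For the refined estimate in the direction $a_\ell$, my plan is to choose $a_\ell \defeq f^{(\ell+1)}(v_0)$, which is nonzero by Assumption~\ref{ass:nl2} and bounded below by compactness. Then $F^{-1}a_\ell = \ell!\,\re_\ell$, so the leading-order system $(F H_d(h))\lambda_0 = a_\ell$ reduces to $H_d(h)\lambda_0 = \ell!\,\re_\ell$. The crucial observation, in the factorization $H_d(h)^{-1} = D^{-1} U^{-1} \tilde D^{-1}$, is that $\tilde D^{-1}(\ell!\,\re_\ell) = \ell!\,(d/h)^{\ell-1}\re_\ell$ picks up \emph{only} the $\ell$-th diagonal entry of $\tilde D^{-1}$, rather than the worst-case $(d/h)^{d-1}$. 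After applying the remaining $U^{-1}$ (constant norm) and $D^{-1}$ (contributing at most $d/h$), one obtains $\norm{\lambda_0} \leq C_d/h^\ell$. The Neumann correction $\lambda - \lambda_0$ multiplies by at most $1+O(h)$, so the sharp bound $\norm{\lambda} \leq \widetilde C_d/h^\ell$ persists.

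The main obstacle is precisely the refined estimate: the crude $\norm{H_d(h)^{-1}}\leq \widetilde c_d/h^d$ is too weak, and the gain to the sharper $h^{-\ell}$ is only visible once one inspects how the diagonal $\tilde D^{-1}$ acts on the \emph{specific} coordinate vector $\re_\ell$. One must also check that the $O(h^{d+1})$ Taylor remainder does not push the bound back up to $h^{-d}$ in that distinguished direction; this works because of the extra factor of $h$ in $\norm{(FH_d(h))^{-1} E}$ relative to the leading term. Beyond these two points, everything reduces to standard Vandermonde algebra and perturbation theory.
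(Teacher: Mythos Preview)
Your proposal is correct and follows essentially the same strategy as the paper: Taylor-expand to factor the increment matrix as $F\,H_d(h)$ plus an $O(h^{d+1})$ remainder, bound $\norm{F^{-1}}$ via Assumption~\ref{ass:nl2}, bound $\norm{H_d(h)^{-1}}$ by $C_d/h^d$, and absorb the remainder by a Neumann series; for the directional part, both you and the paper reduce to the $\ell$-th column of $H_d(h)^{-1}$, which scales like $h^{-\ell}$.

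Two minor differences worth noting. First, the paper bounds $\norm{H_d(h)^{-1}}$ via the adjugate formula and multilinearity of the determinant (its Lemma~\ref{L:normaH}), whereas your double-diagonal factorization $H_d(h)=\tilde D\,U\,D$ with $U$ the fixed Vandermonde at $1,\dots,d$ is a cleaner and more transparent route to the same estimate, and makes the column-by-column scaling $H_d(h)^{-1}\re_\ell\sim h^{-\ell}$ immediate. Second, for the refined direction the paper picks an $h$-dependent perturbation of $f^{(\ell+1)}(v_0)$ so that the system reduces \emph{exactly} to $H_d^{-1}\re_\ell$; your choice $a_\ell=f^{(\ell+1)}(v_0)$ is $h$-independent and the $O(h)$ Neumann correction you invoke indeed preserves the $h^{-\ell}$ bound, so your version is arguably tidier.
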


\begin{remark}
We show in the proof that for the Burgers equation, the direction $a_{\ell}$ is given by 
\[
a_{\ell}~=~f^{(1+\ell)}(v)~=~\left(\frac{i!}{(i-\ell)!}\cdot v^{i-\ell} \delta_{i\geq \ell}\right)_{i=1,\dots,d} .
\]
Then for a flux satisfying Assumption~\ref{ass:nl2}, $a_{\ell}$ is still a direction close to the direction of $f^{(1+\ell)}(v)$.
\end{remark}

To better illustrate the statement of the lemma, we provide as examples the explicit coefficients of the decomposition~\eqref{eq:estimate-of-coefficientsBur} in space dimensions $1, 2$ and $3$.

\begin{example}[$1d$ Burgers equation]
Consider a continuous function $u:\Omega\subseteq \R^{2}\to\R$ that satisfies
\[
  \pd_t u + \ddiv_{\bm{x}}\Big(\frac{u^{2}}{2}\Big) = g,
  \qquad
  d=1,\ 
  \alpha=\frac 1 {d}=1 ,
  \qquad f(u)=\frac{u^{2}}2, \ f'(u)=u, \ 
  f''(u)=1.
\]
Then given $a\in\R^{d}=\R$ and an interval $[v,v+h]$ we write the decomposition~\eqref{eq:estimate-of-coefficientsBur} explicitly as
\begin{align*}
&a= \frac{a}{h}\left( f'(v+h)-  f'(v)\right)= \frac{a}{h} (v+h)-  \frac{a}{h} v= \frac{a}{h}\cdot h,
\\
& \lambda_{1}= \frac{a}{h},
\qquad v_{0}=v,
\qquad v_{1}=v+h.
\end{align*}
Then clearly \(\abs{\lambda_1} \leq  \frac{\abs{a}}{h}\), as obtained in the previous Lemma~\ref{lem:decomposition-lemma} with \(d = 1\) and \(\alpha = 1\).
This leads to the optimal H{\"o}lder regularity of order \(\frac1 {2}\).

%
\end{example}

\begin{example}[$2d$ Burgers equation]
Consider a continuous function $u:\Omega\subseteq \R^{3}\to\R$ that satisfies
\[
  \pd_t u + \ddiv_{\bm{x}}\left(\frac{u^{2}}2,\frac{u^{3}}3\right)  = g ,
  \qquad
  d=2,  \ \alpha=\frac12, \]
  \[
f(u)=\Big(\frac{u^{2}}2,\frac{u^{3}}3\Big) ,  \quad  
f'(u)=( u,{u^{2}}), \quad 
f''(u)=( 1,{2u}) ,  \quad  
f'''(u)=( 0,2) .
\]
Then given $a=(a_x,a_y)\in\R^{d}=\R^{2}$, and an interval $[v,v+h]$, we set \[v_0 \defeq v,
\qquad v_1 \defeq v + \frac{h}{2},
\qquad v_2 \defeq v + h\] and we have $a= \sum_{i=1}^{2} \lambda_i\left( f'(v_{i+1})- f'(v_1)\right)$ for the choice 
\begin{align} 
	\lambda_1 \defeq  \frac{4  (h+2 v)a_x-4a_y}{h^2},
	\qquad
	\lambda_2 \defeq \frac{2 a_y- (h+4 v)a_x}{h^2}.
\end{align}
Then clearly \(\abs{\lambda_i} \leq C \frac{\abs{a}}{h^2}\), whereas the previous Lemma~\ref{lem:decomposition-lemma} with \(d = 2\) and \(\alpha = \frac12\) gives us the weaker estimate \(\abs{a_i} \leq C \frac{\abs{a}}{h^4}\).
Following through the whole argument with this improved estimate results in a better H{\"o}lder regularity of order \(\frac1 {4} \) in place of $\frac 1 {8}$.
\end{example}

\begin{example}[$3d$ Burgers equation]
Consider a continuous function $u:\Omega\subseteq \R^{3}\to\R$ that satisfies
\[
  \pd_t u + \ddiv_{\bm{x}}\left(\frac{u^{2}}2,\frac{u^{3}}3,\frac{u^{4}}4\right)  = g ,
  \qquad
  d=3,  \ \alpha=\frac13, \]
\begin{align*}
&f(u)=\Big(\frac{u^{2}}2,\frac{u^{3}}3,\frac{u^{4}}4\Big) ,   
&&f'(u)=( u,{u^{2}},u^{3}),   
&&f''(u)=( 1,{2u^{}}, 3u^{2}),  \\
& f'''(u)=( 0,{2 {}},6u),  
&& f'''(u)=( 0,{0 {}},6).&&
\end{align*}
Then given $a=(a_x,a_y,a_{z})\in\R^{d}=\R^{3}$, and an interval $[v,v+h]$, we set \[v_0 \defeq v,
\qquad v_1 \defeq v + \frac{h}{3},
\qquad v_2 \defeq v + \frac{2h}{3},
\qquad v_3 \defeq v + h,\] and we have $a= \sum_{i=1}^{3} \lambda_i\left( f'(v_{i+1})- f'(v_1)\right)$ for the choice 
\begin{align} 
	&\lambda_1 \defeq  \frac{9 \left((2 h^2+10   h v+9   v^2a_x  ) - (5   h+9    v) a_y +3 a_z \right)}{2 h^3},
	\\
	&\lambda_2 \defeq -\frac{9 \left(\left(h^2+8 h v+9 v^2\right)a_x  -  (4 h+9 v)a_y+3 a_z \right)}{2 h^3},
	\\
	&\lambda_3 \defeq  \frac{ \left(2 h^2+18 h v+27 v^2\right)a_x-9   (h+3 v)a_y+9 a_z }{2 h^3}.
\end{align}
Then clearly \(\abs{\lambda_i} \leq C \frac{\abs{a}}{h^3}\), whereas the previous Lemma~\ref{lem:decomposition-lemma}  with \(d = 3\) and \(\alpha = \frac13\) gives us the weaker estimate \(\abs{a_i} \leq C \frac{\abs{a}}{h^{9}}\).
Following through the whole argument with this improved estimate results in a better H{\"o}lder regularity of order \(\frac1 {6}\) in place of $\frac 1 {18}$.
\end{example}


Before we proceed with the proof of Lemma~\ref{lem:decomposition-lemma2}, we highlight that it is an interesting improvement because it yields an improved H\"older continuity of $u$.
That is, the exponent $\overline{\gamma} =\frac{1}{2 d^2}$ that was obtained in the preceding section in the setting of nonlinear fluxes satisfying Assumption~\ref{ass:nl1}, is now improved to the sharper estimate $\overline{\gamma} ={\frac{1}{2d}}$, when considering nonlinear fluxes satisfying Assumption~\ref{ass:nl2}.

\begin{proposition} \label{cor:md-holder2}
Suppose the flux \(f \in C^{d+2}([0,1];\R^d)\) satisfies the nonlinearity Assumption~\ref{ass:nl2}.
	Let \(u \in C(\R\times\R^d ; [0,1])\) solve~\eqref{eq:cl} in the sense of distributions, with \(g \in L^\infty(\R\times\R^d)\).
	Then, for any \(t \), \(u(t,\cdot)\) is \(\overline{\gamma}\)-H{\"o}lder continuous in space for \(\overline{\gamma} \defeq{ \frac{1}{2d}}\).
\end{proposition}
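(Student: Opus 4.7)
\textit{Proof of Proposition~\ref{cor:md-holder2}.}
The plan is to re-run the entire chain of arguments from Section~\ref{L:mdclNonl} verbatim, but with the improved decomposition Lemma~\ref{lem:decomposition-lemma2} in place of Lemma~\ref{lem:decomposition-lemma}. Under Assumption~\ref{ass:nl2} we have $\alpha = 1/d$ by Lemma~\ref{L:Silv}, so the previous approach already yields some H\"older regularity; the point is that the new decomposition lemma replaces the coefficient bound $|a_i|\le \widetilde{C}|a|/h^{d/\alpha} = \widetilde{C}|a|/h^{d^2}$ by the sharper bound $|\lambda_i|\le \widetilde{C}_d|a|/h^{d}$, and this improvement propagates cleanly through the geometric arguments of Section~\ref{L:mdclNonl}.

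First, I would reproduce Lemma~\ref{lem:estimate-cylinder}. Its proof chooses nodes $v_1<\cdots<v_{d+1}\in[\overline v,\overline v+h]$, decomposes the displacement $a=y_1-y_2$ as $\sum a_i f'(v_i)$ with $\sum a_i = 0$, and telescopes along the polygonal trajectory $(t_i,x_i)=(t_{i-1},x_{i-1})+a_i(1,f'(v_i))$. Lemma~\ref{lem:decomposition-lemma2} supplies exactly such a decomposition (after rewriting $a=\sum_{i=1}^d\lambda_i f'(v_i)-(\sum_{i=1}^d\lambda_i)f'(v_0)$) together with the improved bound $|a_i|\le \widetilde{C}_d |a|/h^d$. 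This gives
\begin{equation*}
|\tilde a|\le \widetilde{C}_d\,\frac{2r\sqrt{d}}{h_r(t,x)^{d}},
\end{equation*}
in place of the old bound with exponent $d/\alpha = d^2$. The rest of the proof of Lemma~\ref{lem:estimate-cylinder} is unchanged, except that the admissibility condition on $\omega$ becomes $\omega\le C_\omega h^{1+d}$, i.e.~the exponent $1+d/\alpha=1+d^2$ is replaced by $1+d$.

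Next, I would repeat the argument of Lemma~\ref{lem:md-h-estimate}, combining the new cylinder estimate with the kinetic test-function bound of Lemma~\ref{lem:test-function-estimate} (which is independent of the decomposition lemma). Plugging in $\omega = C_\omega h^{1+d}$ and $|\tilde a|\le C r/h^{d}$ into the analog of~\eqref{e:altrettanto}, solving for $h$, gives the base estimate
\begin{equation*}
h_r(t,x)\le C_0\, r^{\gamma_0},\qquad \gamma_0\defeq \frac{1}{2(1+d)}.
\end{equation*}
The bootstrap Lemma~\ref{lem:md-bootstrap} runs identically with the improved decomposition, yielding the recursion
\begin{equation*}
\gamma_{n+1}\defeq \frac{1+2\gamma_n}{2(1+d)},
\end{equation*}
whose fixed point is $\overline\gamma = 1/(2d)$, obtained by solving $2(1+d)\overline\gamma = 1+2\overline\gamma$. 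As in the proof of Proposition~\ref{cor:md-holder}, the iterated H\"older constants stay uniformly bounded because the map $y\mapsto C y^{1/(1+d)}$ is sublinear, so passing to the limit and invoking Lemma~\ref{lem:md-h-to-holder} yields that $u(t,\cdot)$ is $\overline\gamma$-H\"older continuous.

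The main potential obstacle is simply verifying that nothing else in the proofs of Lemmas~\ref{lem:estimate-cylinder}, \ref{lem:md-h-estimate} and~\ref{lem:md-bootstrap} actually used the specific structure of Lemma~\ref{lem:decomposition-lemma} (e.g.~the lower bound $|v_{i+1}-v_i|\ge h/(2(d+1))$). In Lemma~\ref{lem:decomposition-lemma2} the nodes are the equispaced points $v_i=v+ih/d$, so the analogous separation $|v_{i+1}-v_i|=h/d$ holds, and the bounds on $|(\tilde t,\tilde x)-(t,x)|$ and on the geometry of $Q_{r+r'+|\tilde a|(\omega/2+\omega')\|f''\|_{L^\infty}}$ only depend on $|\tilde a|$ and $\|f'\|_{L^\infty}$. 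Thus the improvement is purely a matter of substituting the new coefficient bound at the two places where $|\tilde a|$ and $\omega$ enter the estimates, and the result follows.
\qed
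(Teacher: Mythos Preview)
Your proposal is correct and follows essentially the same approach as the paper: both argue that the improved coefficient bound $|\lambda_i|\le \widetilde C_d\,|a|/h^d$ from Lemma~\ref{lem:decomposition-lemma2} propagates through Lemmas~\ref{lem:estimate-cylinder}, \ref{lem:md-h-estimate}, \ref{lem:md-bootstrap} by replacing every occurrence of $h^{d/\alpha}$ with $h^d$, yielding $\gamma_0=\frac{1}{2(1+d)}$, the recursion $\gamma_{n+1}=\frac{1+2\gamma_n}{2(1+d)}$, and the fixed point $\overline\gamma=\frac{1}{2d}$. Your additional remarks on the node spacing $|v_{i+1}-v_i|=h/d$ and on rewriting $\sum_i\lambda_i[f'(v_i)-f'(v_0)]$ as a zero-sum combination $\sum_i a_i f'(v_i)$ are correct and make the verification slightly more explicit than in the paper.
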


\vskip\baselineskip

\begin{proof}
When~\eqref{eq:estimate-of-coefficients} holds with $h^{d}$ replacing $h^{d/\alpha}$, then, in turn, with identical proofs:
\begin{itemize}
\item Equation~\eqref{eq:estimate-of-tildes} holds with $h^{d}$ instead of $h^{\frac{d}\alpha}$ and Equation~\eqref{eq:estimate-cylinder} holds for any \(\omega \in (0, C_\omega \, h_r(t,x)^{1+d}]\), replacing the previous statement of Equation~\eqref{eq:estimate-cylinder} that holds for any \(\omega \in (0, C_\omega \, h_r(t,x)^{1+\frac{d}{\alpha}}]\).

 \item Equation~\eqref{e:accessorio} holds with 
 $\gamma_0\defeq \frac1{2(1+d)}$ replacing 
 $	\gamma_0 \defeq \frac{1}{2(1+ \frac{d}{\alpha})}
=  \frac{1}{2(1+ {d^2})}
$.
 \item Equation~\eqref{e:espIter} holds with ${\overline\gamma_{n+1}} = \frac{1 +  2{\overline\gamma_n}}{2(1 + d)}$ 
 replacing  
 $\gamma_{n+1} \defeq \frac{1+2\gamma_n}{2(1 + \frac{d}{\alpha})}=\frac{1+2\gamma_n}{2(1 + {d^2}{})}$.

\item Finally, Corollary~\ref{cor:md-holder} holds with \(\overline{\gamma} \defeq \frac{1}{2 d}\) 
replacing \(\overline{\gamma} \defeq \frac{\alpha}{2d}=\frac{1}{2 d^2}\) since this is the fixed point of \[
	\overline{\gamma} = \frac{1 +  2\overline{\gamma}}{2(1+ d)} , 
\]
in place of looking for the fixed point of~\eqref{e:fixedpoint}.
\end{itemize}
\end{proof}

\begin{remark}When instead $f\in C^{k_{d}+1}(\R)$ and \(\{f^{(k_{1})}(v), \cdots, f^{(k_{d})}(v)\}\) is a spanning set of \(\R^d\), for some $d$ integers $2\leq k_{1}<\dots<k_{d}$, then a similar procedure is expected to lead to \(\overline{\gamma}\)-H{\"o}lder continity in space for \(\overline{\gamma} \defeq \frac{1}{2(k_d-1)}\).
\end{remark}


\subsection{Proof for Multi-Dimensional Burgers}
In this section we prove Lemma~\ref{lem:decomposition-lemma2} for a continuous function $u:\Omega\subseteq \R^{d+1}\to\R$ that satisfies the particular, but particularly interesting, case of the Burgers equation:
\begin{subequations}\label{E:BurgersMD}
\begin{align}
  &\pd_t u + \ddiv_{\bm{x}}\left(\frac{u^{2}}2\,,\frac{u^{3}}3\,,\dots\,,\frac{u^{(d+1)}}{d+1}\right) ~=~ g \,,
\\
 &f(u) ~=~ \bigg(\frac{u^{2}}2\,,\frac{u^{3}}3\,,\dots\,,\frac{u^{(d+1)}}{d+1}\bigg) \,, \qquad f'(u)~=~( u\,,{u^{2}}\,,\dots\,, u^{d}) \,,
\\
& f^{(1+\ell)}(u)~=~\left(\frac{i!}{(i-\ell)!}\cdot u^{i-\ell} \delta_{i\geq \ell}\right)_{i=1,\dots,d} \,.
\end{align}
\end{subequations}
 
We begin with an estimate of the operator norm of a particular matrix. 
First, we recall the following definition of norms, see \cite[Definitions~8.6-7]{matrici}.
 
 \begin{recall}\label{D:matrixNorm}
 Given a matrix $A\in \M_{d\cross d}$, where $A=\left[a_{i\ell}\right]_{i,\ell=1,\dots,d}$, its operator norm is defined as
\begin{equation}\label{E:opNorm}
\norm{ A }~\defeq~ \max_{\{\norm{v}=1\}} \norm{A v}{}~=~ \max_{\{v\neq 0\}} \frac{\norm{A v}}{\norm{v}} \,,
\end{equation}
 where we consider the Euclidean norm on $\R^{d}$. The Frobenius norm is defined as
 \begin{equation}\label{e:FrobNorm}
 \norm{A}_{F}
 ~\defeq~
\sqrt{\sum_{i,\ell=1}^{d}a_{i\ell}^{2}} \,.
\end{equation}
 We also recall that the inverse of a matrix $A$ with non vanishing determinant is expressed as
 \begin{equation}\label{E:inverseMatrA}
A^{-1}
~=~ 
\frac{1 }{\deter A}\adj(A )
\end{equation}
where
\begin{itemize}
\item $\deter A$ is the determinant of the matrix $A$ and
\item the element $a_{i\ell}$ at position $(i,\ell)$ of $\adj(A)$ the determinant of the sub-matrix obtained after suppressing from $A$ the $\ell$-th row and $i$-th column, multiplied by $(-1)^{\ell+i}$.
\end{itemize}
\end{recall}
  
  \begin{remark}
  The operator norm and Frobenius norm are equivalent, see~\cite[Proposition 8.4]{matrici}:
\begin{equation}\label{e:nomrEq} \norm{A}~\leq~ \norm{A}_{F}~\leq~\sqrt{d} \norm{A} \qquad \text{for all $A\in M_{n\times n}\,$.}\end{equation}
\end{remark}
 
We now provide an auxiliary estimate for a relevant matrix.

 \begin{lemma}\label{L:normaH}
Let $d\in\N$ and $0<h\leq 1$. The operator norm of the inverse of the matrix
\[ 
H_d~=~H_d(h)~\defeq~\left[
\left(\frac{ih}{d}\right)^{\ell}
\right]_{\ell,i=1,\dots,d}
\]
can be estimated as $\norm{H_d^{-1}}(h)\leq C_d h^{-d}$, for a positive dimensional constant $C_d$.
 \end{lemma}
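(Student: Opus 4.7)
The plan is to factor $H_d(h)$ as a product of a simple diagonal matrix (which carries all the $h$-dependence) and a fixed integer matrix (which is essentially a Vandermonde matrix on the nodes $1,\dots,d$).

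\textbf{Step 1 (Factorization).} Since the $(\ell,i)$-entry satisfies $(ih/d)^\ell = (h/d)^\ell \cdot i^\ell$, I would write
\begin{equation*}
H_d(h) \,=\, D(h)\,V, \qquad D(h) \defeq \mathrm{diag}\!\left(\tfrac{h}{d},\tfrac{h^2}{d^2},\dots,\tfrac{h^d}{d^d}\right), \qquad V \defeq \bigl[i^\ell\bigr]_{\ell,i=1,\dots,d},
\end{equation*}
so that $H_d(h)^{-1} = V^{-1}D(h)^{-1}$ whenever $V$ is invertible.

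\textbf{Step 2 (Invertibility of $V$).} Observe $V^{\transpose}$ has entries $(V^{\transpose})_{i,\ell} = i^\ell$; factoring $i$ from the $i$-th row yields $V^{\transpose} = \mathrm{diag}(1,2,\dots,d)\cdot V'$, where $V'_{i,\ell} = i^{\ell-1}$ is the classical Vandermonde matrix on the distinct nodes $1,\dots,d$. Both factors are invertible, so $V$ is invertible and $\|V^{-1}\| = \|(V^{\transpose})^{-1}\|$ is a constant depending only on $d$; call it $K_d$.

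\textbf{Step 3 (Estimate of $\|D(h)^{-1}\|$).} The operator norm of a diagonal matrix equals its largest absolute diagonal entry. Since $0 < h \leq 1$, the smallest entry of $D(h)$ is $(h/d)^d$, so
\begin{equation*}
\norm{D(h)^{-1}} \,=\, \max_{1\leq \ell \leq d} \left(\frac{d}{h}\right)^{\ell} \,=\, \left(\frac{d}{h}\right)^{d} \,=\, d^d\, h^{-d}.
\end{equation*}

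\textbf{Step 4 (Conclusion).} Using submultiplicativity of the operator norm,
\begin{equation*}
\norm{H_d(h)^{-1}} \,\leq\, \norm{V^{-1}}\cdot \norm{D(h)^{-1}} \,\leq\, K_d\, d^d\, h^{-d} \,\eqdef\, C_d\, h^{-d}.
\end{equation*}

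There is essentially no obstacle: the key point is just recognizing $V$ as a (scaled) Vandermonde matrix on distinct integer nodes, which ensures $\|V^{-1}\|$ is a purely dimensional constant, while all the $h$-dependence is isolated in the diagonal factor $D(h)$ and contributes exactly the $h^{-d}$ blow-up. If a more explicit constant $C_d$ were desired, one could use~\eqref{E:inverseMatrA} to write $V^{-1} = (\deter V)^{-1}\adj(V)$ together with the known Vandermonde determinant $\deter V' = \prod_{1\leq j<i\leq d}(i-j)$; but the qualitative bound follows just from invertibility.
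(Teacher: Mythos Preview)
Your proof is correct and rests on the same core observation as the paper's: each row of $H_d(h)$ carries a common factor $(h/d)^\ell$, so the $h$-dependence separates from a fixed integer Vandermonde-type matrix. The paper implements this via Cramer's rule, computing $\deter H_d$ and bounding each cofactor of $\adj(H_d)$ separately (see~\eqref{E:inverseMatr2}--\eqref{e:cofattore}), then passing through the Frobenius norm. Your explicit factorization $H_d(h)=D(h)V$ with $H_d^{-1}=V^{-1}D(h)^{-1}$ is a cleaner packaging of the same idea: it isolates the $h$-dependence in a single diagonal factor and reduces the rest to the invertibility of a fixed matrix, avoiding the cofactor bookkeeping entirely. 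The paper's route has the minor advantage of making the constant more explicit (and it later reuses the column-by-column structure of $H_d^{-1}$ in the directional estimates), but for the lemma as stated your argument is both shorter and more transparent.
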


 \begin{proof}
 Recall first from~\eqref{E:inverseMatrA} the general expression of the inverse matrix
 \begin{equation}\label{E:inverseMatr}
H_d^{-1}
~=~ 
\frac{1 }{\deter H_d}\adj(H_d )\, .
\end{equation}

By multi-linearity of the determinant, one can extract from the $\ell$-th row the common factor $\left({\frac h d}\right)^\ell$ and repeat this for all rows $\ell=1,\dots,d$: multiplying the common factors we extracted, we get\footnote{The determinant of $\left[
{i}^{\ell}
\right]_{\ell,i=1,\dots,d}$ can be computed by induction, it is related to one of the Vandermonde matrices, see~\cite[Example 6.2]{matrici}.}
\begin{equation}\label{E:inverseMatr2}
\frac{1 }{\deter H_d}~=~
\frac{1}{\deter{\left[
{i}^{\ell}
\right]_{\ell,i=1,\dots,d}}}
\cdot\left(\frac d h\right)^{\frac{(d+1)d}{2}} ,
\qquad
\deter{\left[
{i}^{\ell}
\right]_{\ell,i=1,\dots,d}}~=~
  \prod_{0\leq i \leq d}  i ! \,.
\end{equation}

Again by multi-linearity of the determinant, and by definition of the adjoint matrix $\adj(H_d )$, in the elements of the adjoint matrix obtained suppressing the $\ell$-th row of $ H_d $, there is a common factor of $\frac h d$ with exponent $\frac{(d+1)d}{2}-\ell\geq \frac{(d+1)d}{2}-d$.
Therefore denoting $\adj(H_d )=\left[a_{ i\ell}\right]_{i,\ell=1,\dots,d}$ we have
\begin{align}\label{e:cofattore}
a_{ i\ell}~&=~(-1)^{i+\ell}
 {{\deter{\left[
{j}^{m}
\right]_{\substack{ m,j=1,\dots,d\\ m\neq \ell\,, j\neq i }}}}}\cdot 
{ \left(\frac h d \right)^{\frac{(d+1)d}{2}-\ell} }
\\
~&\leq~\notag
(-1)^{i+\ell} {{\deter{\left[
{j}^{m}
\right]_{\substack{ m,j=1,\dots,d\\ m\neq \ell\,, j\neq i }}}}}\cdot 
{ \left(\frac h d \right)^{\frac{(d+1)d}{2}-d} }\,.
\end{align}
By the definition of the Frobenius norm \eqref{e:FrobNorm}, thus we estimate, for a dimensional constant $ \widetilde C_d>0$,
\begin{equation}\label{E:acc2}
\norm{\adj(H_d )}_{F}
~ =~
\sqrt{\sum_{i,\ell=1}^{d}a_{i\ell}^{2}}
 ~\leq~
 \widetilde C_d\cdot 
  h^{\frac{(d+1)d}{2}-d} \,.
\end{equation}
Inserting estimates~\eqref{E:acc2} and~\eqref{E:inverseMatr2} into~\eqref{E:inverseMatr} we get 
\begin{equation}\label{E:inverseMatrHest}
\norm{H_d^{-1}}_{F}
~\leq~ 
C_p\cdot 
  h^{ -d}\,,
  \qquad 
  C_p~\defeq~\frac{ d^{\frac{(d+1)d}{2}}\widetilde C_d}{ \prod_{0\leq i \leq d}  i !} \,.
  \end{equation}
From the equivalence of the Frobenius norm and the operator norm~\eqref{e:nomrEq} we finally get the thesis.
 \end{proof}
 
 \begin{example}
 We write explicitly the matrices in low dimension $d=2,3,4$:
 \[
H_{2}(h)~=~ \left(
\begin{array}{cc}
 \frac{h}{2} & h \\
 \frac{h^2}{4} & h^2 \\
\end{array}
\right)
\qquad
H_{2}^{-1}(h)~=~
\left(
\begin{array}{cc}
 \frac{4}{h} & -\frac{4}{h^2} \\
 -\frac{1}{h} & \frac{2}{h^2} \\
\end{array}
\right)
\]
\[
H_{3}(h)~=~ \left(
\begin{array}{ccc}
 \frac{h}{3} & \frac{2 h}{3} & h \\
 \frac{h^2}{9} & \frac{4 h^2}{9} & h^2 \\
 \frac{h^3}{27} & \frac{8 h^3}{27} & h^3 \\
\end{array}
\right)
\qquad
H_{3}^{-1}(h)~=~
\left(
\begin{array}{ccc}
 \frac{9}{h} & -\frac{45}{2 h^2} & \frac{27}{2 h^3} \\
 -\frac{9}{2 h} & \frac{18}{h^2} & -\frac{27}{2 h^3} \\
 \frac{1}{h} & -\frac{9}{2 h^2} & \frac{9}{2 h^3} \\
\end{array}
\right)
\]\[
H_{4}(h)~=~\left(
\begin{array}{cccc}
 \frac{h}{4} & \frac{h}{2} & \frac{3 h}{4} & h \\
 \frac{h^2}{16} & \frac{h^2}{4} & \frac{9 h^2}{16} & h^2 \\
 \frac{h^3}{64} & \frac{h^3}{8} & \frac{27 h^3}{64} & h^3 \\
 \frac{h^4}{256} & \frac{h^4}{16} & \frac{81 h^4}{256} & h^4 \\
\end{array}
\right)
\qquad
H_{4}^{-1}(h)~=~
\left(
\begin{array}{cccc}
 \frac{16}{h} & -\frac{208}{3 h^2} & \frac{96}{h^3} & -\frac{128}{3 h^4} \\
 -\frac{12}{h} & \frac{76}{h^2} & -\frac{128}{h^3} & \frac{64}{h^4} \\
 \frac{16}{3 h} & -\frac{112}{3 h^2} & \frac{224}{3 h^3} & -\frac{128}{3 h^4} \\
 -\frac{1}{h} & \frac{22}{3 h^2} & -\frac{16}{h^3} & \frac{32}{3 h^4} \\
\end{array}
\right).
\]
 \end{example}
 
We can now finally prove Lemma~\ref{lem:decomposition-lemma2} in the case of the Burgers equation, where the computations are more explicit.
 
\textit{Proof of Lemma~\ref{lem:decomposition-lemma2}, Burgers.}
Given $a \in\R^{d} $, and an interval $[v,v+h]$, we want to write 
\[
a= \sum_{i=1}^{d} \lambda_i ( f'(v_i)- f'(v_0))\,, 
\qquad
\text{for} \quad 
v_{i}=v+\frac{ih}{d}\,, \quad i=0\,,\dots\,,d\,.
\]

Considering $f$, $a$, $\lambda$ as column vectors, then
\[
a~=~A\lambda
\qquad\Longleftrightarrow\qquad
\lambda~=~A^{-1}a
\]
for the square matrix $A\in\M_{d\times d}$ defined by the increments
\[
A(v,h,d)~\defeq~\left[ f'(v_1)- f'(v_0)| \, \dots \, | f'(v_d)- f'(v_0)\right]\,.
\]
For better understanding, we make explicit the matrix $A$ in small dimension.
If $d=4$ one has
\[
A(v,h,4)~=~\left(
\begin{array}{cccc}
\frac{h}{4} & \frac{h}{2} & \frac{3h}{4} & h \\
\left(\frac{h}{4}+v\right)^2-v^2 & \left(\frac{h}{2}+v\right)^2-v^2 & \left(\frac{3h}{4}+v\right)^2-v^2 & (h+v)^2-v^2 \\
\left(\frac{h}{4}+v\right)^3-v^3 & \left(\frac{h}{2}+v\right)^3-v^3 & \left(\frac{3h}{4}+v\right)^3-v^3 & (h+v)^3-v^3 \\
\left(\frac{h}{4}+v\right)^4-v^4 & \left(\frac{h}{2}+v\right)^4-v^4 & \left(\frac{3h}{4}+v\right)^4-v^4 & (h+v)^4-v^4 \\
\end{array}
\right)\]
which is, denoting by $A^{T}$ the transpose of $A$,
\[
A^{T}(v,h,4)~=~\left(
\begin{array}{cccc}
 \frac{h}{4} & \frac{h^2}{16}+\frac{h v}{2} & \frac{h^3}{64}+\frac{3 h^2 v}{16}+\frac{3 h v^2}{4} & \frac{h^4}{256}+\frac{h^3 v}{16}+\frac{3 h^2 v^2}{8}+h v^3 \\
 \frac{h}{2} & \frac{h^2}{4}+h v & \frac{h^3}{8}+\frac{3 h^2 v}{4}+\frac{3 h v^2}{2} & \frac{h^4}{16}+\frac{h^3 v}{2}+\frac{3 h^2 v^2}{2}+2 h v^3 \\
 \frac{3 h}{4} & \frac{9 h^2}{16}+\frac{3 h v}{2} & \frac{27 h^3}{64}+\frac{27 h^2 v}{16}+\frac{9 h v^2}{4} & \frac{81 h^4}{256}+\frac{27 h^3 v}{16}+\frac{27 h^2 v^2}{8}+3 h v^3 \\
 h & h^2+2 h v & h^3+3 h^2 v+3 h v^2 & h^4+4 h^3 v+6 h^2 v^2+4 h v^3 \\
\end{array}
\right).
\]
In general $A(v,h,d)=[ f'(v_1)- f'(v_0)| \,\dots\, | f'(v_d)- f'(v_0)]
$ is, indexing rows by $\ell$ and columns by $i$,
\[
A(v,h,d)~=~
\left[ \left(v-\frac{ih}{d}\right)^{\ell}- v^{\ell}\right]_{\ell,i=1,\dots,d}
~=~
\left[\sum_{k=0}^{\ell-1} \binom{\ell}{k}\left(\frac{ih}{d}\right)^{\ell-k} v^{k}\right]_{\ell,i=1,\dots,d}\,.
\]
Notice that
\[
\left(\begin{array}{cccc}
1 & 0 & 0 & 0 \\
-2v & 1 & 0 & 0 \\
3v^{2} & -3v & 1 & 0 \\
-4v^{3} & 6v^{2} & -4v & 1 \\
\end{array}\right)A(v,h,4)~=~
\left(
\begin{array}{cccc}
\frac{h}{4} & \frac{h}{2} & \frac{3h}{4} & h \\
\frac{h^2}{16} & \frac{h^2}{4} & \frac{9h^2}{16} & h^2 \\
\frac{h^3}{64} & \frac{h^3}{8} & \frac{27h^3}{64} & h^3 \\
\frac{h^4}{256} & \frac{h^4}{16} & \frac{81h^4}{256} & h^4 \\
\end{array}
\right).
\]
Similarly, indexing with $\ell$ rows and with $i$ columns, we have the expression
\[
\left[
\binom{\ell}{i}\left(-1\right)^{i+\ell} v^{\ell-i}\delta_{\ell\geq i}
\right]_{\ell,i=1,\dots,d}
A(v,h,d)
~=~
\left[
\left(\frac{ih}{d}\right)^{\ell}
\right]_{\ell,i=1,\dots,d}
\]
so that
\[
A(v,h,d)
~=~
\left[
\binom{\ell}{i} v^{\ell}\delta_{\ell\geq i}
\right]_{\ell,i=1,\dots,d}
\left[
\left(\frac{ih}{d}\right)^{\ell}
\right]_{\ell,i=1,\dots,d}
\]
and
\begin{align}\label{E:matrixProductInverseA}
\left(A(v,h,d)\right)^{-1}
~=~
\left(\left[
\left(\frac{ih}{d}\right)^{\ell}
\right]_{\ell,i=1,\dots,d}\right)^{-1}
\left[
\binom{\ell}{i}\left(-1\right)^{i+\ell}  v^{\ell-i}\delta_{\ell\geq i}
\right]_{\ell,i=1,\dots,d}.
\end{align}

Being $\lambda=A^{-1}a$, we want to understand the operator norm of $A^{-1}$, as defined in Definition~\ref{D:matrixNorm}. 
Since the operator norm is sub-multiplicative, then
\begin{equation}\label{E:stima1}
\norm{ A^{-1}(v,h,d)}~\leq~\norm{\left[
\binom{\ell}{i}\left(-1\right)^{i+\ell}  v^{\ell-i}\delta_{\ell\geq i}
\right]_{i,\ell=1,\dots,d}
}
\cdot  \norm{H_d^{-1}(h)}{} \,,
 \end{equation}
 where $H_d$ is precisely the matrix in Lemma~\ref{L:normaH}.
 Defining the polynomial
 \[
 p_{d}(v)~=~\norm{\left[
\binom{\ell}{i}\left(-1\right)^{i+\ell}  v^{\ell-i}\delta_{\ell\geq i}
\right]_{i,\ell=1,\dots,d}
}_F^2
 \]
 thus from~\eqref{E:stima1} and Lemma~\ref{L:normaH}, jointly with the equivalence of norms~\eqref{e:nomrEq}, we conclude
 \[
 \norm{ A^{-1}(v,h,d)}~\leq~ C_d \sqrt{p_d(v)}\cdot h^{-d}\,.
 \]
 
 Finally, be definition of the operator norm, one has the thesis~\eqref{eq:estimate-of-coefficientsBur}:
\[
\norm{\lambda}~=~\norm{\left(A(v,h,d)\right)^{-1} a} ~\leq~ C_{d}\sqrt{p_{d}(v)}
 \cdot\frac{\norm{a}}{h^d}\,.
 \]
 \qed

\textit{Proof of the stronger directional estimate in Lemma~\ref{lem:decomposition-lemma2} for the Burgers equation.}
In the setting of the proof just concluded for the general estimate in Lemma~\ref{lem:decomposition-lemma2}, define the matrix $W\in\M_{d\times d}$ whose columns are given by the direction of the derivatives $f''$,\dots,$f^{(d+1)}$ for the flux, up to coefficients, as
\begin{equation}\label{e:wroB}
W~=~\left[
\binom{\ell}{i} v^{\ell}\delta_{\ell\geq i}
\right]_{\ell,i=1,\dots,d}
~=~
\left(\begin{array}{cccccc}
1 & 0 & 0 & 0 & 0 \\
2v & 1 & 0 & 0 & 0 \\
3v^{2} & 3v & \ddots & 0 & 0 \\
4v^{3} & 6v^{2} & & 1 & 0 \\
5v^{4} & 10v^{3} & \cdots & dv & 1 \\
\end{array}\right).
\end{equation}
Consider now vectors $a\in\R^{n}$, $\norm{a}=1$, that satisfy the proportionality
\[
W^{-1}
a~=~\xi \, \re_{\ell}
 \qquad\Longleftrightarrow\qquad
 a~=~\xi \cdot W^{} \cdot\re_{\ell} \,,
\]
where necessarily $ \xi \geq C(v,d)$ since $W$ is not singular.
Then one has 
the expression
\[
\lambda  ~=~A^{-1}(v,h,d)a ~=~
    H_d^{-1}(h)\cdot W^{-1} a
~=~
\xi H_d^{-1}(h)\cdot  \re_{\ell}\,.
\]
Since, by its expression~\eqref{E:inverseMatr2}--\eqref{e:cofattore}, the $\ell$-th column of the matrix $ H_d^{-1}(h)$ is proportional to $h^{-\ell}$, then
\[
\norm{\lambda }~\geq~  \xi ~\geq~ C(v,d)\cdot \tilde c_{d}\cdot h^{-\ell}\,\,
\]
where $\tilde c_{d}$ is the minimum norm among columns of the matrix $ H_d^{-1}(1)$.
In particular, for vectors $a_{\ell}$ that are proportional to the vector
\[
 f^{(1+\ell)}(v)~=~\ell!~ W\cdot \re_{\ell} 
   \]
one has the stronger estimate in the thesis.
\qed

\subsection{Proof for Smooth Nondegenerate Fluxes}

We finally consider a smooth flux $f=(f_1,\dots, f_d)$ that satisfies the nonlinearity Assumption~\ref{ass:nl2}.

\vskip\baselineskip
 
 \textit{Proof of the general estimate in Lemma~\ref{lem:decomposition-lemma2}, nondegenerate fluxes.}
Given $a \in\R^{d} $, and an interval $[v,v+h]$, we want to write 
\[
a= \sum_{i=1}^{d} \lambda_i ( f'(v_i)- f'(v_0)) \,,
\qquad \text{for}
\quad 
v_{i}=v+\frac{ih}{d}
\quad i=0\,,\dots\,,d\,.
\]

Considering $f$, $a$, $\lambda$ as column vectors, then
\[
a~=~A\lambda
\qquad\Longleftrightarrow\qquad
\lambda~=~A^{-1}a
\]
for the square matrix $A(v_0,h,d)\in\M_{d\times d}$ defined by the increments
\begin{equation}\label{e:Aincrements}
A(v_0,h,d)~\defeq~[ f'(v_1)- f'(v_0)|\,\dots\, | f'(v_d)- f'(v_0)]\,.
\end{equation}
Having a smooth flux, we exploit the Taylor expansion
\begin{equation}
\label{E:Taylor}
 f'(v_i)- f'(v_0)~=~\sum_{\ell=1}^d\frac{1}{\ell!} f^{(1+\ell)}(v_0)\left(\frac{ih}{d}\right)^\ell + \sum_{j = 1}^{d}\frac{1}{(d+1)!}\re_j f_j^{(d+2)}(\xi_{j,i})  \left(\frac{ih}{d}\right)^{d+1}
\end{equation}
for some values  $\xi_{j,i}\in[v_0,v_i]$ and for $i,j=1,\dots, d$.

We can express the matrix of increments $A(v_0,h,d)$, essentially, as a product of a matrix $W(f,v_0)\in\M_{d\times d}$ depending only on the flux times a matrix $H_{d}(h)\in\M_{d\times d}$ depending only on the increments. To do so, consider the almost Wronskian matrix, where $\ell$ denotes rows and $i$ columns,
\begin{equation}\label{e:wro}
W(f,v_0)~=~ \left[ \frac{1}{\ell!} f_i^{(1+\ell)}(v_0)  \right]_{\ell,i=1,\dots,d}
\end{equation}
and define the remainder matrix $R(f,v_0,h)\in\M_{d \times d}$ as
\begin{equation}\label{e:reminder}
 R(f,v_0,h) ~\defeq~\sum_{j=1}^d\frac{1}{(d+1)!}\left[ \re_jf_j^{(2+d)}(\xi_{j,1})\Big|2^{(d+1)} \re_j f_j^{(2+d)}(\xi_{j,2})\Big|\dots\Big|d^{(d+1)} \re_jf_j^{(2+d)}(\xi_{j,d}) \right] .
\end{equation}
Then
\begin{align}
A(v_0,h,d)&~=~
\left[ \frac{1}{\ell!} f_m^{(1+\ell)}(v_0)  \right]_{m,\ell=1,\dots,d}
\left[
\left(\frac{ih}{d}\right)^{\ell-i}
\right]_{\ell,i=1,\dots,d}
+ R(f,v_0,h)  \left(\frac{h}{d}\right)^{d+1}\notag
\\
&~=~\label{E:ecco1}
  W(f,v_0) \cdot H_d(h)+ R(f,v_0,h)  \left(\frac{h}{d}\right)^{d+1} .
\end{align}

The non-degeneracy Assumption~\ref{ass:nl2} precisely guarantees that the matrix $W(f,v)$ is invertible: thus $\norm{  W(f,v_0) ^{-1} }\leq C(f,v_0)$ for some constant $C(f,v_0)$. Then, by sub-multiplicativity of the norm of matrices, we estimate the inverse of the approximation $  W(f,v_0) \cdot H_d(h)$ of $A(v_0,h,d)$ as
\begin{equation}\label{E:ecco3}
\norm{ \left[  W(f,v_0) \cdot H_d(h)\right]^{-1}}_F
~= ~
\norm{    H_d^{-1}(h)\cdot W^{-1}(f,v_0) }_F
~\leq ~
C(f,v_0) \cdot  C_d h^{-d}\,,
\end{equation}
thanks to the estimate of $\norm{H_d^{-1}(h)}$ in Lemma~\ref{L:normaH}.

It remains to prove that the reminder in~\eqref{E:ecco1} does not destroy such estimate.
To that extent, we apply that if $\widetilde A, B\in\M_{d\times d}$ are two matrices with $\norm{\widetilde A^{-1} R}<1$, see~\cite[Proposition 8.8]{matrici}, then
\begin{equation}\label{E:ecco2}
\norm{\left(\widetilde A+R\right)^{-1}}~\leq ~\frac{\norm{\widetilde A}}{1-\norm{\widetilde A^{-1} R}}
~=~\norm{\widetilde A} \cdot \left(1+\frac{\norm{\widetilde A^{-1} R}}{1-\norm{\widetilde A^{-1} R}}\right).
\end{equation}
Pick the two matrices $\widetilde A, R\in\M_{d\times d}$ defined as\[\widetilde A~\defeq~  W(f,v_0)\cdot H_d(h) \qquad\text{and}\qquad R~\defeq~R(f,v_0,h)  \left(\frac{h}{d}\right)^{d+1}\,.\] 
In order to apply this inequality~\eqref{E:ecco2}, we show that $\norm{\widetilde A^{-1}\cdot R}<\frac12$ for $h$ small: by~\eqref{E:ecco3} and by sub-multiplicity
\begin{equation}\label{E:stimaProdb}
\norm{\widetilde A^{-1}\cdot R}~\leq~
\norm{R(f,v_0,h) }\cdot C(f,v_0) \cdot \frac{C_d }{d^{d+1}}\cdot h\,.
\end{equation}
Since for $h=0$ the matrix $R(f,v_0,h) $ in~\eqref{e:reminder} has all columns proportional to $f^{(d+2)}(v_0)$, then
\[
\norm{R(f,v_0,0) }~=~\frac{1}{(d+1)!}\norm{f^{(d+2)}(v_0)}\cdot \norm{(1,2^{(d+1)},\dots,d^{(d+1)})}\,,
\] 
and $\norm{R(f,v_0,h) }$ is close to such value for $h$ small.
In particular, by~\eqref{E:stimaProdb} for $h$ small one has $\norm{\widetilde A^{-1}\cdot B}\leq 2^{-1}$, which allows to apply~\eqref{E:ecco2} to estimate the norm of the inverse of $A=\widetilde A+R$:
\begin{equation}\label{E:ecco2bis}
\norm{A^{-1}(v_0,h,d) }~=~\norm{\left(\widetilde A+R\right)^{-1}}~\leq ~ 2\norm{\widetilde A} .
\end{equation}
Recalling~\eqref{E:ecco1} we reach the thesis: for $h$ small it holds that
\[
\norm{A^{-1}(v_0,h,d) }~\leq~
\norm{\widetilde A^{-1}\cdot B}~\leq~
\widetilde 2C(f,v_0) \cdot  C_d h^{-d}\,.
\]
\qed

\textit{Proof of the stronger directional estimate in Lemma~\ref{lem:decomposition-lemma2} for nondegenerate fluxes.} 
Continuing with the notation of the proof just concluded for the general estimate in Lemma~\ref{lem:decomposition-lemma2}, in particular \eqref{e:reminder}, consider now vectors $a\in\R^{n}$, $\norm{a}=1$, that for some $\xi\in\R$ satisfy the proportionality
\[
\left(W(f,v_0)+H_{h}^{-1}(h) R(f,v_0,h)  \left(\frac{h}{d}\right)^{d+1} \right)^{-1}
a~=~\xi \, \re_{\ell}
\]
which is of course equivalent to
\[ a~=~\xi \cdot \left(W(f,v_0)+H_{h}^{-1}(h) R(f,v_0,h)  \left(\frac{h}{d}\right)^{d+1} \right)\cdot\re_{\ell} \,.
\]
Observe that necessarily $ \xi \geq C(v_0,d)$, for $h$ small enough, since $\left(W(f,v_0)+H_{h}^{-1}(h) R(f,v_0,h) \right)$ is not singular, since $W(f,v_0)$ is not singular by the nonlinearity Assumption~\ref{ass:nl2} and by Lemma~\ref{L:normaH}
\[
\norm{H_{h}^{-1}(h) R(f,v_0,h)  \left(\frac{h}{d}\right)^{d+1}}~\leq ~
\norm{H_{h}^{-1}(h) }\cdot \norm{R(f,v_0,h) }\cdot  \left(\frac{h}{d}\right)^{d+1}
~\leq~ C_d \norm{R(f,v_0,h)} h^{}\,.
\]
Then one has 
the expression
\[
  A^{-1}(v_0,h,d)a ~=~
    H_d^{-1}(h)\cdot \left(W(f,v_0)+H_{h}^{-1}(h) R(f,v_0,h)  \left(\frac{h}{d}\right)^{d+1} \right)^{-1} a
~=~
\xi H_d^{-1}(h)\cdot  \re_{\ell}\,.
\]
Since, by its expression~\eqref{E:inverseMatr2}--\eqref{e:cofattore}, the $\ell$-th column of the matrix $ H_d^{-1}(h)$ is proportional to $h^{-\ell}$, then
\[
\norm{  A^{-1}(v_0,h,d)a  }~\geq~  \xi ~\geq~ C(v_0,d)\cdot \tilde c_{d}\cdot h^{-\ell}\,\,
\]
where $\tilde c_{d}$ is the minimum norm among columns of the matrix $ H_d^{-1}(1)$.

In particular, for vectors $a_{\ell}$ that are proportional to the vector
\[
\frac{1}{\ell!}f^{(1+\ell)}(v_0)+\frac{2^{(d+1)}}{d^{d+1}\cdot(d+1)!}H_{h}^{-1}(h)  f^{(2+d)}(\xi_\ell)  \cdot  h^{d+1}\,,
   \]
which perturbs $\frac{1}{\ell!}f^{(1+\ell)}(v_0)$, one has the stronger estimate in the thesis.
\qed
\bigskip\bigskip

\noindent {\bf Acknowledgements.}  
F.~Ancona, L.~Caravenna and E.~Marconi  are members of GNAMPA of the ``Istituto Nazionale di Alta Matematica
F.~Severi'',  are partially supported by the PRIN
Project 20204NT8W4 “Nonlinear evolution PDEs, fluid dynamics and transport
equations: theoretical foundations and applications”,
and by the PRIN 2022 PNRR Project P2022XJ9SX ``Heterogeneity on the road---modeling, analysis, control''.
E.~Marconi  is also partially supported by H2020-MSCA-IF “A~Lagrangian approach: from conservation laws to line-energy Ginzburg-Landau models”.

\vspace{0.5cm}
\noindent {\bf Declaration of interests.}  
The authors report there are no competing interests to declare.

\vspace{1cm}

\vfill

\bibliographystyle{siam}


\end{document}